\newcolumntype{r}{D{.}{.}{-1}}
\newtheorem{theorem}{Theorem}[section]
\theoremstyle{plain}
\newtheorem{corollary}[theorem]{Corollary}
\theoremstyle{definition}
\newtheorem{definition}[theorem]{Definition}
\newtheorem{example}[theorem]{Example}
\theoremstyle{plain}
\newtheorem{lemma}[theorem]{Lemma}
\newtheorem{proposition}[theorem]{Proposition}
\theoremstyle{remark}
\newtheorem{remark}[theorem]{Remark}
\newtheorem{notation}[theorem]{Notation}
\numberwithin{equation}{section}
\DeclareMathOperator{\Hom}{Hom}
\DeclareMathOperator{\QQ}{\mathbb Q}
\begin{document}
\title[Algorithms for the adjoint ideal of a curve]{Local to global algorithms for the Gorenstein adjoint ideal of a curve}
\author[J. B\"{o}hm]{Janko B\"{o}hm}
\address{Fachbereich Mathematik\\
Universit\"{a}t Kaiserslautern\\
Postfach 3049\\
D-67653 Kaiserslautern, Germany}
\email{boehm@mathematik.uni-kl.de}
\urladdr{\href{http://www.mathematik.uni-kl.de/~boehm/index.htm}{http://www.mathematik.uni-kl.de/~boehm/index.htm}%
}
\author[W. Decker]{Wolfram Decker}
\address{Fachbereich Mathematik\\
Universit\"{a}t Kaiserslautern\\
Postfach 3049\\
D-67653 Kaiserslautern, Germany}
\email{decker@mathematik.uni-kl.de}
\urladdr{\href{http://www.mathematik.uni-kl.de/~decker/}{http://www.mathematik.uni-kl.de/~decker/}%
}
\author[S. Laplagne]{Santiago Laplagne}
\address{Departamento de Matem\'{a}tica\\
Facultad de Ciencias Exactas y Naturales\\
(1428) Pabell\'{o}n I - Ciudad Universitaria\\
Buenos Aires, Argentina}
\email{slaplagn@dm.uba.ar}
\urladdr{\href{http://cms.dm.uba.ar/Members/slaplagn}{http://cms.dm.uba.ar/Members/slaplagn}%
}
\author[G. Pfister]{Gerhard Pfister}
\address{Fachbereich Mathematik\\
Universit\"{a}t Kaiserslautern\\
Postfach 3049\\
D-67653 Kaiserslautern, Germany}
\email{pfister@mathematik.uni-kl.de}
\urladdr{\href{http://www.mathematik.uni-kl.de/~pfister/}{http://www.mathematik.uni-kl.de/~pfister/}%
}
\subjclass[2010]{Primary 14Q05; Secondary 14H20, 14H50, 68W10}
\keywords{Adjoint ideals, Singularities, Curves}

\begin{abstract}
We present new algorithms for computing adjoint ideals of curves and thus, in
the planar case, adjoint curves. With regard to terminology, we follow
Gorenstein who states the adjoint condition in terms of conductors.

Our main algorithm yields the Gorenstein adjoint ideal $\mathfrak{G}$ of a
given curve as the intersection of what we call local Gorenstein adjoint
ideals. Since the respective local computations do not depend on each other,
our approach is inherently parallel.

Over the rationals, further parallelization is achieved by a modular version
of the algorithm which first computes a number of the characteristic $p$
counterparts of $\mathfrak{G}$ and then lifts these to characteristic zero. As
a key ingredient, we establish an efficient criterion to verify the
correctness of the lift.

Well-known applications are the computation of Riemann-Roch spaces, the
construction of points in moduli spaces, and the parametrization of rational curves.

We have implemented different variants of our algorithms together with Mnuk's
approach \cite{Mnuk} in the computer algebra system \textsc{{Singular}} and
give timings to compare the performance of the algorithms.





\end{abstract}
\maketitle

\section{Introduction}

In classical algebraic geometry, starting from Riemann's paper on abelian
functions \cite{Riemann}, the adjoint curves of an irreducible plane curve
$\Gamma$ have been used as an essential tool in the study of the geometry of
$\Gamma$. The defining property of an \emph{adjoint curve} is that it passes
with \textquotedblleft sufficiently high\textquotedblright\ multiplicity
through the singularities of $\Gamma$. There are several ways of making this
precise, developed in classical papers by \cite{BrillNoether}, \cite[1893]%
{Castelnuovo}, and \cite{Petri}, and in more recent work by \cite{Groebner41,
Gorenstein} and \cite{vanderWaerden, Keller}. We refer to \cite{Keller65}%
,\cite{GrecoValabrega}, \cite{GrecoValabrega2}, and \cite{Ciliberto} for
results comparing the different notions: whereas the adjoint condition given
by Brill and Noether is more restrictive, the notions of adjoint curves given
by the other authors above coincide.

In this paper, we always consider adjoint curves in the less restrictive
sense. In fact, we rely on Gorenstein's algebraic definition which states the
adjoint condition at a singular point $P\in\Gamma$ by considering the
conductor of the local ring $\mathcal{O}_{\Gamma,P}$ in its normalization. It
is a well-known consequence of Max Noether's Fundamentalsatz that the adjoint
curves of any given degree $m$ cut out, residual to a fixed divisor supported
on the singular locus of $\Gamma$, a complete linear series. Of fundamental
importance is the case $m=\deg\Gamma-3$ which, as shown by Gorenstein, yields
the canonical series.

The ideal generated by the defining forms of the adjoint curves of $\Gamma$ is
called the \emph{adjoint ideal} of $\Gamma$. In \cite{ArbarelloCiliberto}, the
concept of adjoint ideals is extended to the non-planar case: consider a
non-degenerate irreducible curve $\Gamma\subset\mathbb{P}_{k}^{r}%
=\operatorname*{Proj}(S)$, and let $I$ be a saturated homogeneous ideal of $S$
which is supported on the singular locus of $\Gamma$. Then, roughly speaking,
$I$ is an adjoint ideal of $\Gamma$ if its homogeneous elements of degree
$m\gg0$ cut out, residual to a fixed divisor supported on the singular locus,
a complete linear series. As pointed out in \cite{ArbarelloCiliberto}, the
existence of adjoint ideals is implicit in classical papers: examples are the
Castelnuovo adjoint ideal and the Petri adjoint ideal. In \cite{Ciliberto}, it
is shown that Gorenstein's condition leads to the largest possible adjoint
ideal, containing all other adjoint ideals, and now referred to as the
\emph{Gorenstein adjoint ideal }$\mathfrak{G}=\mathfrak{G}(\Gamma)$. See
\cite{Ciliberto} for some remarks on how the different concepts of adjoint
ideals compare in the non-planar case.


With regard to practical applications, adjoint curves enter center stage in
the classical Brill-Noether algorithm for computing Riemann-Roch spaces, which
in turn can be used to construct Goppa codes (see \cite{BrRi}). Furthermore,
linear series cut out by adjoint curves allow us to construct explicit
examples of smooth curves via singular plane models; a typical application is
the experimental study of moduli spaces of curves. If the geometric genus of a
plane curve $\Gamma$ is zero, then the adjoint curves of degree $\deg\Gamma-2$
specify a birational map to a rational normal curve. Based on this, we can
find an explicit parametrization of $\Gamma$ over its field of definition,
starting either from the projective line or a conic. See \cite{Boehm, BDLSpar}
and the implementation in the \textsc{Singular} library \cite{BDLS3}.
Algorithms for parametrization, in turn, have applications in computer aided
design, for example, to compute intersections of curves with other algebraic
varieties. See also \cite{Winkler}.

A well-known algorithm for computing the Gorenstein adjoint ideal
$\mathfrak{G} =\mathfrak{G}(\Gamma)$ in the planar case is due to \cite{Mnuk}.
This algorithm makes use of linear algebra to obtain $\mathfrak{G}$ from an
integral basis for the normalization $\overline{k[C]}$, where $C$ is an affine
part of $\Gamma$ containing all singularities of $\Gamma$. Efficient ways of
finding integral bases rely on Puiseux series techniques (see
\cite{vanHoeijIntegralBasis}, \cite{{BDLS}}). This somewhat limits Mnuk's
approach to characteristic zero. The same applies to the algorithm of
\cite{KM}, which also computes the Gorenstein adjoint ideal of a plane curve
from an integral basis of $\overline{k[C]}$. The approach of \cite{OR14}, on
the other hand, is limited to ordinary multiple points.

In this paper, we present a new algorithm for computing $\mathfrak{G}$. This
algorithm is highly efficient and not restricted to the planar case, special
types of singularities or to characteristic zero. The basic idea is to compute
$\mathfrak{G}$ as the intersection of \textquotedblleft local Gorenstein
ideals\textquotedblright, one for each singular point of $\Gamma$. Each local
ideal is obtained via Gr\"obner bases, starting from a \textquotedblleft local
contribution\textquotedblright\ to the normalization $\overline{k[C]}$ at the
respective singular point. To find these contributions, we use the algorithm
from \cite{BDLPSS} which is a local variant of the normalization algorithm
designed in \cite{GLS}. In practical terms, given any field of definition
$L\subset k$, we treat the points in a complete set of conjugate singularities simultaneously.

Our approach is already faster per se. In addition, it can take advantage of
handling special classes of singularities in an ad hoc way. Above all, it is
inherently parallel. For input over the rationals, further parallelization is
achieved by a modular version of our algorithm which first computes a number
of characteristic $p$ counterparts of $\mathfrak{G}$ and then lifts these to
characteristic zero. To apply the general rational reconstruction scheme from
\cite{FareyPaper}, we prove an efficient criterion to verify the correctness
of the lift.

Our paper is organized as follows: In Section \ref{sec normalization}, we
discuss algorithmic normalization. In Section \ref{Sec adjoint ideals}, we
review the definition of adjoint ideals and some related facts. In Section
\ref{Sec Global approach}, we describe global algorithmic approaches to obtain
$\mathfrak{G}$. We first discuss Mnuk's approach. Then we describe a global
approach which relies on normalization and Gr\"{o}bner bases.
In Sections \ref{Sec Global from local} and \ref{Sec General local approch},
we present our local to global algorithm for finding $\mathfrak{G}$ via
normalization and Gr\"{o}bner bases.
Section \ref{sec summary} pays particular attention to the planar case,
commenting on the direct treatment of special types of singularities. In
Section \ref{sec parallel and modular}, we discuss the modular version of our
algorithm. Finally, in Section \ref{Sec examples and comparisons}, we compare
the performance of the different approaches, relying on our implementations in
the computer algebra system \textsc{{Singular}}, and running various examples
coming from algebraic geometry.

\section{Algorithms for Normalization\label{sec normalization}}

We begin with some general remarks on normalization and the role played by the
conductor. For these, let $A$ be any reduced Noetherian ring, and let
$\operatorname{Q}(A)$ be its total ring of fractions. Then $\operatorname{Q}%
(A)$ is again a reduced Noetherian ring. We write
\[
\operatorname{Spec}(A)=\{P\subset A\mid P{\text{ prime ideal}}\}
\]
for the {\emph{spectrum}} of $A$. The {\emph{vanishing locus}} of an ideal $J$
of $A$ is the set $V(J)=\{P\in\operatorname{Spec}(A)\mid P\supset J\}$. 

The {\emph{normalization}} of $A$, written $\overline{A}$, is the integral
closure of $A$ in $\operatorname{Q}(A)$. We call $A$
\emph{normalization-finite} if $\overline{A}$ is a finite $A$-module, and we
call $A$ \emph{normal} if $A=\overline{A}$.

We denote by
\[
N(A)=\{P\in\operatorname{Spec}(A)\mid A_{P}\text{ is not normal}\}
\]
the {\emph{non-normal locus}} of $A$, and by
\[
\operatorname{Sing}(A)=\{P\in\operatorname{Spec}(A)\mid A_{P}\text{ is not
regular}\}
\]
the {\emph{singular locus}} of $A$.

\begin{remark}
\label{rem:nnlocus-singlocus-dim-one} Note that $N(A)\subset
\operatorname{Sing}(A)$. Equality holds if $A$ is of pure dimension one.
Indeed, a Noetherian local ring of dimension one is normal iff it is regular
(see \cite[Thm.~4.4.9]{DeJong}).
\end{remark}

\begin{definition}
If $R\subset S$ is an extension of rings, the \emph{conductor} of $A$ in $B$
is
\[
\mathcal{C}_{S/R}=\left\{  r\in R\mid rS\subset S\right\}  .
\]

\end{definition}

Note that $\mathcal{C}_{S/R}$ is the largest ideal of $R$ which is also an
ideal of $S$.

\begin{notation}
If $A$ is a reduced Noetherian ring as above, we write
\[
\mathcal{C}_{A}=\mathcal{C}_{\overline{A}/A}=\{a\in A\mid a\overline{A}\subset
A\}\text{.}%
\]
\end{notation}

\begin{lemma}
\label{lemma-role-of-cond} \label{lemma:role-of-cond} We have $N(A)\subset
V(\mathcal{C}_{A})$. Furthermore, ${A}$ is normalization-finite iff
$\mathcal{C}_{A}$ contains a nonzerodivisor of $A$. In this case,
$N(A)=V(\mathcal{C}_{A})$.
\end{lemma}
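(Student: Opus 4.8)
The plan is to establish the three assertions in turn, working locally at primes of $A$ throughout. For the first inclusion $N(A) \subset V(\mathcal{C}_A)$, I would argue by contraposition: if $P \in \operatorname{Spec}(A)$ with $P \not\supset \mathcal{C}_A$, pick $c \in \mathcal{C}_A \setminus P$. Then $c$ becomes a unit in $A_P$, so from $c\overline{A} \subset A$ we get $\overline{A} \subset c^{-1} A \subset A_P$ after localizing, and since localization commutes with integral closure in the total ring of fractions, this forces $(\overline{A})_P = \overline{A_P} = A_P$, i.e. $A_P$ is normal and $P \notin N(A)$. (One should note that localization of $\overline A$ at $P$ equals $\overline{A_P}$ for a reduced Noetherian ring; this is standard, e.g. it holds because integral closure commutes with localization.)

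For the second statement, I would prove both directions. If $A$ is normalization-finite, write $\overline{A} = \sum_{i=1}^n A b_i$ with $b_i \in \operatorname{Q}(A)$. Each $b_i$ can be written as $b_i = x_i/d_i$ with $x_i \in A$ and $d_i$ a nonzerodivisor of $A$ (this is how elements of the total ring of fractions look); then $d := d_1 \cdots d_n$ is a nonzerodivisor with $d\,\overline{A} \subset A$, so $d \in \mathcal{C}_A$. Conversely, if $\mathcal{C}_A$ contains a nonzerodivisor $d$, then $\overline{A} \subset d^{-1} A$, and $d^{-1} A \cong A$ is a finite $A$-module, so $\overline{A}$ is a submodule of a Noetherian module, hence finite. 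This direction uses that $A$ is Noetherian so that submodules of finite modules are finite.

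For the final claim, assume $A$ is normalization-finite; it remains to show $V(\mathcal{C}_A) \subset N(A)$, since the reverse inclusion is the first assertion. I would again localize: for $P \supset \mathcal{C}_A$, I want to show $A_P$ is not normal. The key point is that under normalization-finiteness the conductor localizes well, namely $\mathcal{C}_{A_P} = (\mathcal{C}_A)_P$. This follows because $\overline{A_P} = (\overline A)_P$ is a finitely generated $A_P$-module, and for a finite module extension the conductor commutes with localization (it can be expressed as an annihilator, $\mathcal{C}_{\overline A/A} = \operatorname{Ann}_A(\overline A / A)$, and annihilators of finitely generated modules commute with localization). Hence $P \supset \mathcal{C}_A$ implies $\mathcal{C}_{A_P} = (\mathcal{C}_A)_P \subset P A_P$ is a proper ideal, so $A_P \neq \overline{A_P}$, i.e. $A_P$ is not normal and $P \in N(A)$.

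The main obstacle is the bookkeeping around localization: verifying that $\overline{(-)}$ and the conductor both commute with localization at $P$, and in particular that $\mathcal{C}_{\overline A / A} = \operatorname{Ann}_A(\overline A/A)$ so that finiteness of $\overline A$ over $A$ makes the conductor behave well under localization. Once these functoriality facts are in place, each of the three statements reduces to a one-line local computation as above. Everything else — passing between "contains a nonzerodivisor" and "$\overline A \subset d^{-1}A$ is finite" — is routine given that $A$ is reduced and Noetherian.
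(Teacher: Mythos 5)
Your proof is correct; the paper itself offers no argument, only a citation to Greuel--Pfister \cite[Lemmas 3.6.1, 3.6.3]{GP}, so there is nothing to compare line by line. Your argument is the standard one and is complete: you correctly use that integral closure commutes with localization (valid here because for a reduced Noetherian ring $\operatorname{Q}(A)_P = \operatorname{Q}(A_P)$, since both are the product of the fraction fields of $A/P_i$ over the minimal primes $P_i \subset P$), that elements of $\operatorname{Q}(A)$ have nonzerodivisor denominators with a common multiple, that a submodule of a finite module over a Noetherian ring is finite, and that $\mathcal{C}_A = \operatorname{Ann}_A(\overline{A}/A)$, so under normalization-finiteness the conductor localizes because annihilators of finitely generated modules do. The only place one might want a word more is in the first step, where you should make explicit that normality is preserved under passing to $\overline{A_P} = (\overline A)_P = A_P$ (which you do), and that the identification $\operatorname{Q}(A)_P \cong \operatorname{Q}(A_P)$ really holds in the reduced Noetherian setting --- but you flag this, and it is indeed standard. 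No gaps.
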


\begin{proof}
See \cite[Lemmas~3.6.1, 3.6.3]{GP}.
\end{proof}

\begin{remark}
[Splitting of Normalization]Finding the normalization can be reduced to the
case of integral domains: If $P_{1}\dots, P_{s}$ are the minimal primes of
$A$, then
\[
\overline{A}\cong\overline{A/P_{1}} \times\cdots\times\overline{A/P_{s}}
\]
(see \cite[Thm.~1.5.20]{DeJong}).
\end{remark}

\begin{remark}
Let $k$ be a field. An {\emph{affine $k$-domain}} is a finitely generated
$k$-algebra which is an integral domain. By Emmy Noether's finiteness theorem
(see \cite[Cor.~13.13]{Eis}), any such domain is normalization-finite, and its
normalization is an affine $k$-domain as well. Geometrically, 
by gluing, this implies that any integral variety $X$ over $k$ admits 
a (unique) \emph{normalization map} $\overline{X} \rightarrow X$, where 
$\overline{X}$ is again an integral variety over $k$
(see, for example, \cite[Sec. 4.1.2]{Liu}).
Specifically, by Remark \ref{rem:nnlocus-singlocus-dim-one}, if
$\Gamma$ is a curve over $k$, we get the \emph{nonsingular model}
$\pi:\overline{\Gamma}\rightarrow\Gamma$.
%
\end{remark}

Now, we briefly discuss algorithmic normalization. We begin by recalling the
normalization algorithm of Greuel, Laplagne, and Seelisch \cite{GLS}, which is
an improvement of de Jong's algorithm (see \cite{deJong98}, \cite{DGPJ}). This
algorithm, to which we refer as the GLS Algorithm, is based on the normality
criterion of Grauert and Remmert. To state this criterion, we need:

\begin{lemma}
\label{lemma:prep-GR} Let $A$ be a reduced Noetherian ring, and let $J\subset
A$ be an ideal which contains a nonzerodivisor $g$ of $A$. Then:

\begin{enumerate}
\item If $\varphi\in\Hom_{A}(J, J)$, the fraction
$\varphi(g)/g\in\overline{A}$ is independent of the choice of $g$, and
$\varphi$ is multiplication by $\varphi(g)/g$.

\item There are natural inclusions of rings
\[
A\subset\Hom_{A}(J, J)\cong\frac{1}{g}(gJ :_{A} J)\subset\overline{A}%
\subset{\text{Q}}(A),\; a \mapsto\varphi_{a}, \; \varphi\mapsto\frac
{\varphi(g)}{g},
\]
where $\varphi_{a}$ is multiplication by $a$.
\end{enumerate}
\end{lemma}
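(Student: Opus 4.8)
The statement has two parts, and part (2) is essentially a corollary of part (1) together with standard facts, so I would organize the proof around establishing part (1) carefully first.

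For part (1), let $\varphi \in \Hom_A(J,J)$ and let $g$ be the given nonzerodivisor in $J$. Since $\varphi(g) \in J \subset A$, the fraction $\varphi(g)/g$ makes sense as an element of the total ring of fractions $\operatorname{Q}(A)$ (here $g$ is a unit in $\operatorname{Q}(A)$). The first thing to check is that $\varphi$ is multiplication by this fraction on all of $J$: for any $h \in J$, I would compute $g\,\varphi(h) = \varphi(gh) = h\,\varphi(g)$, using $A$-linearity of $\varphi$ and the fact that $gh = hg$; since $g$ is a nonzerodivisor, this forces $\varphi(h) = (\varphi(g)/g)\,h$ in $\operatorname{Q}(A)$. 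Independence of the choice of $g$ then follows immediately: if $g'$ is another nonzerodivisor in $J$, then applying the identity just derived with $h = g'$ gives $\varphi(g') = (\varphi(g)/g)\,g'$, i.e.\ $\varphi(g')/g' = \varphi(g)/g$. Finally, to see that $\varphi(g)/g$ lies in $\overline{A}$ and not merely in $\operatorname{Q}(A)$, I would observe that $J$ is a faithful $A$-module (it contains the nonzerodivisor $g$) which is finitely generated over the Noetherian ring $A$, and that the element $t := \varphi(g)/g$ satisfies $tJ \subset J$; by the determinant trick / Cayley–Hamilton argument (the standard characterization of integral elements via a faithful finitely generated module stable under multiplication), $t$ is integral over $A$, hence $t \in \overline{A}$.

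For part (2), the map $a \mapsto \varphi_a$ (multiplication by $a$) is an injective ring homomorphism $A \to \Hom_A(J,J)$ because $J$ is faithful. By part (1), the assignment $\varphi \mapsto \varphi(g)/g$ is a well-defined map $\Hom_A(J,J) \to \overline{A}$; it is a ring homomorphism since composition of multiplication operators corresponds to multiplication of the associated fractions, and it is injective because $\varphi$ is \emph{determined} by $\varphi(g)/g$ (part (1) again). The isomorphism $\Hom_A(J,J) \cong \tfrac{1}{g}(gJ :_A J)$ is then obtained by identifying $\varphi$ with $\varphi(g) \in (gJ :_A J)$: indeed $\varphi(g)/g \cdot J \subset J$ is equivalent to $\varphi(g) J \subset gJ$, i.e.\ $\varphi(g) \in (gJ :_A J)$, and conversely any such element defines an endomorphism of $J$ via multiplication; scaling by $1/g$ gives the stated description as a subring of $\operatorname{Q}(A)$. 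The chain of inclusions $A \subset \Hom_A(J,J) \subset \overline{A} \subset \operatorname{Q}(A)$ assembles these observations.

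The only genuinely substantive point is the integrality argument showing $\varphi(g)/g \in \overline{A}$ — everything else is bookkeeping with fractions. I would make sure that the hypothesis "$J$ contains a nonzerodivisor" is used in exactly the two places it is needed: to make $g$ invertible in $\operatorname{Q}(A)$ (so the fractions are defined and $g$ can be cancelled), and to guarantee that $J$ is a faithful $A$-module (so the determinant trick applies and so $a \mapsto \varphi_a$ is injective). Since the lemma is cited to \cite[Lemmas~3.6.1, 3.6.3]{GP}, in the paper itself this is presumably left as a reference; the sketch above is the proof one would reconstruct.
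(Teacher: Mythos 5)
Your proof is correct and complete; the two key points — the cancellation argument $g\varphi(h)=\varphi(gh)=h\varphi(g)$ showing $\varphi$ is multiplication by $\varphi(g)/g$, and the determinant trick applied to the finitely generated faithful $A$-module $J$ to get integrality — are exactly the standard argument. The paper itself defers to \cite[Lemmas~3.6.1, 3.6.3]{GP} without spelling this out, and your reconstruction matches that reference's approach.
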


\begin{proof}
See \cite[Lemmas~3.6.1, 3.6.3]{GP}.
\end{proof}

\begin{proposition}
[Grauert and Remmert Criterion]\label{prop:testideal}\label{prop:crit-GR} Let
$A$ be a reduced Noetherian ring, and let $J \subset A$ be a radical ideal
which contains a nonzerodivisor $g$ of $A$ and satisfies $V(\mathcal{C}_{A})
\subset V(J)$. Then $A$ is normal iff $A \cong\Hom_{A}(J, J)$ via the map
which sends $a$ to multiplication by $a$.
\end{proposition}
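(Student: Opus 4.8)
The plan is to prove the two implications separately, the forward one being essentially formal and the converse carrying the real content. If $A$ is normal, then $\overline{A}=A$, so the chain of natural inclusions $A\subseteq\Hom_A(J,J)\subseteq\overline{A}$ supplied by Lemma~\ref{lemma:prep-GR}(2) collapses to equalities; since by part (1) of that lemma every $\varphi\in\Hom_A(J,J)$ is multiplication by $\varphi(g)/g\in\overline{A}=A$, the map $a\mapsto\varphi_a$ is surjective, and it is injective because $g$ is a nonzerodivisor. Hence it is an isomorphism.

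For the converse I would argue by contradiction: assume $A\cong\Hom_A(J,J)$ via $a\mapsto\varphi_a$ but $A\subsetneq\overline{A}$, and fix some $b\in\overline{A}\setminus A$. The first step is to replace $b$ by an element that multiplies $J$ into $A$. The hypothesis $V(\mathcal{C}_A)\subseteq V(J)$ says precisely that $J\subseteq\sqrt{\mathcal{C}_A}$, so by Noetherianity $J^{K}\subseteq\mathcal{C}_A$ for some $K$, whence $bJ^{K}\subseteq\mathcal{C}_A\overline{A}\subseteq A$. Let $k\geq 1$ be minimal with $bJ^{k}\subseteq A$ (note $k\geq 1$ since $b\notin A$); by minimality there is $a\in J^{k-1}$ with $ab\notin A$, and setting $b':=ab$ we get $b'\in\overline{A}\setminus A$ with $b'J\subseteq bJ^{k}\subseteq A$.

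The crucial step is then to upgrade $b'J\subseteq A$ to $b'J\subseteq J$, and here I would use that $b'$ is integral over $A$ together with the fact that $J$ is radical. Given $x\in J$, substitute $b'$ into a monic relation $b'^{\,n}+a_{n-1}b'^{\,n-1}+\cdots+a_0=0$ with $a_i\in A$, and multiply through by $x^{n}$; writing $(b')^{i}x^{n}=(b'x)^{i}x^{n-i}$ and using $b'x\in A$, every summand except $(b'x)^{n}$ retains a positive power of $x$ and therefore lies in $J$, so $(b'x)^{n}\in J$ and hence $b'x\in J$. Thus $\varphi_{b'}\in\Hom_A(J,J)$, so by hypothesis $\varphi_{b'}=\varphi_c$ for some $c\in A$; evaluating at the nonzerodivisor $g\in J$ gives $b'g=cg$, hence $b'=c\in A$, contradicting the choice of $b'$. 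This forces $A=\overline{A}$, i.e.\ $A$ is normal.

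I expect the main obstacle to be exactly this last upgrade, passing from ``$b'$ carries $J$ into $A$'' to ``$b'$ carries $J$ into $J$''. The inclusions from Lemma~\ref{lemma:prep-GR} and the reduction of $b$ to $b'$ are routine; it is the interplay between integrality of $b'$ and radicality of $J$ that does the real work, and it is the only place where the hypothesis that $J$ be \emph{radical}—rather than merely an ideal containing a nonzerodivisor with $V(\mathcal{C}_A)\subseteq V(J)$—is genuinely used.
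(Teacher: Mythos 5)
Your proof is correct, and it reproduces the standard argument from \cite[Prop.~3.6.5]{GP}, which is exactly what the paper cites in place of its own proof. In particular you correctly locate where each hypothesis enters: $V(\mathcal{C}_A)\subset V(J)$ (equivalently $J\subset\sqrt{\mathcal{C}_A}$) is used to produce, via the conductor property $\mathcal{C}_A\overline{A}\subset A$ and a minimality argument, an element $b'\in\overline{A}\setminus A$ with $b'J\subset A$; the nonzerodivisor $g$ is used both for injectivity of $a\mapsto\varphi_a$ and for the final cancellation $b'g=cg\Rightarrow b'=c$; and the radicality of $J$ is precisely what turns the integral relation for $b'$ into $(b'x)^n\in J\Rightarrow b'x\in J$, upgrading $\varphi_{b'}$ from an element of $\Hom_A(J,A)$ to an element of $\Hom_A(J,J)$.
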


\begin{proof}
See \cite{GR}, \cite[Prop.~3.6.5]{GP}.
\end{proof}

\begin{definition}
\label{def:test-pair} A pair $(J,g)$ as in the proposition is called a
{\emph{test pair}} for $A$, and $J$ is called a {\emph{test ideal}} for $A$.
\end{definition}

If $k$ is a field and $A$ is an affine $k$-domain, then test pairs exist by
Lemma \ref{lemma-role-of-cond} and Emmy Noether's finiteness theorem. If, in
addition, $k$ is perfect, a test pair can be found by applying the Jacobian
criterion (see \cite[Thm. 16.19]{Eis} for this criterion). In fact, in this
case, we may choose the radical of the Jacobian ideal
$M$ together with any nonzero element $g$ of $M$ as a test pair. Given a test
pair $(J,g)$, the basic idea of finding $\overline{A}$ is to enlarge $A$ by a
sequence of finite extensions of affine $k$-domains
\[
A_{i+1}\cong\operatorname*{Hom}\nolimits_{A_{i}}(J_{i},J_{i})\cong\frac{1}%
{g}(gJ_{i}:_{A_{i}}J_{i})\subset\overline{A}\subset\operatorname{Q}(A),
\]
with $A_{0}=A$ and $J_{i}=\sqrt{JA_{i}}$, until the Grauert and Remmert
criterion allows one to stop. According to \cite{GLS}, each $A_{i}$ can be
represented as a quotient $\frac{1}{d_{i}}U_{i}\subset\operatorname{Q}(A)$,
where $U_{i}\subset A$ is an ideal and $d_{i}\in U_{i}$ is nonzero. In this
way, all computations except those of the radicals $J_{i}$ may be carried
through in $A$.

\begin{example}
\label{ex a4 E8} For
\[
A=\mathbb{C}[x,y]=\mathbb{C}[X,Y]/\langle X^{5}-Y^{2}\left(  Y-1\right)
^{3}\rangle\text{,}%
\]
the radical of the Jacobian ideal is
\[
J:=\left\langle x,y\left(  y-1\right)  \right\rangle _{A}\text{,}%
\]
so that we can take $(J,x)$ as a test pair. Then, in its first step, the
normalization algorithm yields
\[
A_{1}=\frac{1}{x}U_{1}=\frac{1}{x}\left\langle x,y(y-1)^{2}\right\rangle _{A}
\text{.}%
\]
In the next steps, we get
\[
A_{2}=\frac{1}{x^{2}}U_{2}=\frac{1}{x^{2}}\left\langle x^{2}%
,xy(y-1),y(y-1)^{2}\right\rangle _{A}%
\]
and%
\[
A_{3}=\frac{1}{x^{3}}U_{3}=\frac{1}{x^{3}}\left\langle x^{3},x^{2}%
y(y-1),xy(y-1)^{2},y^{2}(y-1)^{2}\right\rangle _{A}\text{.}%
\]
In the final step, we find that $A_{3}$ is normal and, hence, equal to
$\overline{A}$.
\end{example}

Next, we describe a local to global variant of the GLS algorithm, given in
\cite{BDLPSS}, which is a considerable enhancement of the algorithm, and which
serves as a motivation for our local to global approach to compute the
Gorenstein adjoint ideal. This variant is based on the following two
observations from \cite{BDLPSS}: First, the normalization $\overline{A}$ can
be computed as the sum of local contributions $A\subset A^{(i)}\subset
\overline{A}$, and second, local contributions can be obtained efficiently by
a local variant of the GLS algorithm. For our purposes here, it is enough to
present the relevant results in a special case. Here, as usual, if $P$ is a
prime of a ring $R$, and $M$ is an $R$-module, we write $M_{P}$ for the
localization of $M$ at $R\setminus P$.

\begin{proposition}
\label{prop:local-to-global-I} Let $A$ be an affine $k$-domain of 
dimension one, and let $\operatorname{Sing}(A)=\{P_{1},\dots,P_{s}\}$ be its
singular locus. For $i=1,\dots,s$, let an intermediate ring $A\subset
A^{(i)}\subset\overline{A}$ be given such that $A_{P_{i}}^{(i)}=\overline
{A_{P_{i}}}$. Then
\[
\sum_{i=1}^{s}A^{(i)}=\overline{A}\text{.}%
\]

\end{proposition}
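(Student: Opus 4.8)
The plan is to show that $B:=\sum_{i=1}^{s}A^{(i)}$ equals $\overline{A}$ by checking the inclusion $B\subset\overline{A}$ locally. Both $B$ and $\overline{A}$ are $A$-submodules of $\operatorname{Q}(A)$ with $B\subset\overline{A}$, and equality of $A$-submodules can be tested at the maximal ideals of $A$; so it suffices to prove $B_{\mathfrak{m}}=\overline{A}_{\mathfrak{m}}$ for every maximal ideal $\mathfrak{m}$ of $A$. The one general fact about normalization I would invoke is that integral closure commutes with localization: for every prime $P$ of $A$ one has $\overline{A}_{P}=\overline{A_{P}}$, the integral closure of $A_{P}$ in $\operatorname{Q}(A_{P})=\operatorname{Q}(A)$. (By Emmy Noether's finiteness theorem $\overline{A}$ is a finite $A$-module, and all rings occurring are again affine $k$-domains, but finiteness is not actually needed for the local test.)

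Now fix a maximal ideal $\mathfrak{m}$ of $A$ and distinguish two cases. If $\mathfrak{m}\notin\{P_{1},\dots,P_{s}\}=\operatorname{Sing}(A)$, then, since $A$ has pure dimension one, Remark \ref{rem:nnlocus-singlocus-dim-one} gives $N(A)=\operatorname{Sing}(A)$, so $\mathfrak{m}\notin N(A)$ and $A_{\mathfrak{m}}$ is normal. Hence $A_{\mathfrak{m}}=\overline{A_{\mathfrak{m}}}=\overline{A}_{\mathfrak{m}}$, and from $A_{\mathfrak{m}}\subset B_{\mathfrak{m}}\subset\overline{A}_{\mathfrak{m}}$ we conclude $B_{\mathfrak{m}}=\overline{A}_{\mathfrak{m}}$. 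If instead $\mathfrak{m}=P_{i}$ for some $i$, then by the hypothesis $A^{(i)}_{P_{i}}=\overline{A_{P_{i}}}$ together with the commutation above,
\[
\overline{A}_{P_{i}}=\overline{A_{P_{i}}}=A^{(i)}_{P_{i}}\subset B_{P_{i}}\subset\overline{A}_{P_{i}},
\]
so again $B_{P_{i}}=\overline{A}_{P_{i}}$. In all cases $B_{\mathfrak{m}}=\overline{A}_{\mathfrak{m}}$, hence $B=\overline{A}$.

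There is no serious obstacle here; the two points that genuinely carry the argument are the commutation of integral closure with localization — which is what lets us read the hypothesis $A^{(i)}_{P_{i}}=\overline{A_{P_{i}}}$ as $A^{(i)}_{P_{i}}=\overline{A}_{P_{i}}$ — and the dimension-one fact, recalled in Remark \ref{rem:nnlocus-singlocus-dim-one}, that a Noetherian local ring of dimension one is normal iff it is regular. It is precisely the latter that forces $A_{\mathfrak{m}}$ to be already normal away from the finitely many singular primes, so that those localizations contribute nothing further. The finiteness of $\operatorname{Sing}(A)$, needed to know that we have accounted for all non-normal primes, is given in the statement.
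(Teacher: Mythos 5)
Your proof is correct. The paper itself does not supply an argument for this proposition but simply refers to Prop.~15 of \cite{BDLPSS}; the localization argument you give — reduce to checking $B_{\mathfrak m}=\overline A_{\mathfrak m}$ at each maximal ideal, use $\overline{A}_P=\overline{A_P}$, and observe that $A_{\mathfrak m}$ is already normal away from $\operatorname{Sing}(A)$ by the dimension-one equivalence of normal and regular — is precisely the natural route and matches the cited source.
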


\begin{proof}
See \cite[Prop. 15]{BDLPSS}.
\end{proof}


\begin{definition}
A ring $A^{(i)}$ as above is called a \emph{local contribution} to
$\overline{A}$ at $P_{i}$. It is called a \emph{minimal local contribution} if
$A^{(i)}_{P_{j}}={A _{P_{j}}}$ for $j\neq i$.

\end{definition}

The computation of local contributions is based on the modified version of the
Grauert and Remmert criterion below:

\begin{proposition}
\label{prop local grauert-remmert} Let $A$ be an affine $k$-domain of
dimension one, let $A\subset A^{\prime}$ be a finite ring extension, let
$P\in\operatorname{Sing}(A)$, and let $J^{\prime}=\sqrt{PA^{\prime}}$. If
\[
A^{\prime}\cong\operatorname*{Hom}\nolimits_{A^{\prime}}(J^{\prime},J^{\prime
})
\]
via the map which sends $a^{\prime}$ to multiplication by $a^{\prime}$, then
$A_{P}^{\prime}$ is normal.
\end{proposition}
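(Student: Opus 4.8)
The plan is to localize at $P$ and appeal to the global Grauert and Remmert criterion, Proposition~\ref{prop:crit-GR}. Write $S=A\setminus P$, $B=S^{-1}A'=A'_P$ and $\mathfrak{J}=S^{-1}J'=J'_P=\sqrt{PA'_P}$. Since $A'$ is finite over the one-dimensional affine $k$-domain $A$ — and, in the setting where the proposition is applied, sits between $A$ and $\overline A$ — it is itself a one-dimensional affine $k$-domain; hence $B$ is a Noetherian domain of dimension at most one which, being a localization of an affine $k$-algebra, is normalization-finite. Because $J'$ is finitely generated over the Noetherian ring $A'$, the functor $\Hom$ commutes with the localization at $S$, so the hypothesis $A'\cong\Hom_{A'}(J',J')$ yields an isomorphism $B\cong\Hom_B(\mathfrak J,\mathfrak J)$, again induced by $b\mapsto(x\mapsto bx)$.

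Next I would check that $\mathfrak J$ is a test ideal for $B$. It is radical by construction. Since $P$ is a singular prime of the domain $A$, it is nonzero, so $PA'\neq 0$ and therefore $\mathfrak J\neq 0$; as $B$ is a domain, $\mathfrak J$ contains a nonzerodivisor $g$ of $B$. It remains to verify $V(\mathcal C_B)\subseteq V(\mathfrak J)$. By Lemma~\ref{lemma-role-of-cond}, normalization-finiteness of $B$ gives $V(\mathcal C_B)=N(B)$. On the other hand $\mathfrak J=\sqrt{PA'_P}$ cuts out precisely the primes of $B$ lying over $P$: a prime $\mathfrak q$ of $B$ contains $PA'_P$ iff $\mathfrak q\cap A_P=PA_P$, the maximal ideal of the local ring $A_P$, and since $B$ is finite over $A_P$ such $\mathfrak q$ are exactly the maximal ideals of $B$, so $V(\mathfrak J)=\operatorname{MaxSpec}(B)$. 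Finally, the only non-maximal prime of $B$ is the zero ideal (as $\dim B\le\dim A=1$ and $B$ is a domain), and $B$ localized there is the fraction field $\operatorname{Q}(A')$, which is normal; hence $N(B)\subseteq\operatorname{MaxSpec}(B)=V(\mathfrak J)$.

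With $(\mathfrak J,g)$ now a test pair for $B$ in the sense of Definition~\ref{def:test-pair} and $B\cong\Hom_B(\mathfrak J,\mathfrak J)$ via multiplication by elements of $B$, Proposition~\ref{prop:crit-GR} applies and shows that $B=A'_P$ is normal, which is the claim.

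The step I expect to need the most care is the inclusion $V(\mathcal C_B)\subseteq V(\mathfrak J)$, which is where the one-dimensionality of $A$ really enters — through the fact that, away from the maximal ideals over $P$, the ring $B$ localizes to a field (compare Remark~\ref{rem:nnlocus-singlocus-dim-one}). The remaining ingredients — commutation of $\Hom$ with localization for the finitely generated module $J'$, and the observation that the localized isomorphism is still given by multiplication (which, once $\mathfrak J$ contains a nonzerodivisor, is automatic by Lemma~\ref{lemma:prep-GR}(1)) — are routine bookkeeping.
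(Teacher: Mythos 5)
Your proof is correct. The paper itself gives no proof of this proposition; it simply cites \cite[Prop.~16]{BDLPSS}, so there is no internal argument to compare against. Your argument -- localize the hypothesis $A'\cong\Hom_{A'}(J',J')$ at $A\setminus P$ (legitimate because $J'$ is finitely presented over the Noetherian ring $A'$), observe that the localized isomorphism is again given by multiplication, verify that $\mathfrak J=\sqrt{PA'_P}$ is a test ideal for $B=A'_P$ by identifying $V(\mathfrak J)$ with $\operatorname{MaxSpec}(B)$ via lying-over/going-up, and then apply the global Grauert--Remmert criterion (Proposition~\ref{prop:crit-GR}) -- is the natural argument and is exactly what one would expect the cited reference to do.

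Two small remarks. First, the statement as printed only says ``finite ring extension,'' while your proof (rightly) treats $A'$ as a domain; as you note, the proposition is only ever invoked for $A\subset A'\subset\overline A$, where this is automatic, and indeed the very notion of normality for $A'_P$ already presupposes that $A'_P$ is at least reduced. It would be slightly cleaner to make that hypothesis explicit at the outset rather than leave it as a parenthetical aside. Second, the logical chain for the crucial inclusion is most transparently phrased as: normalization-finiteness of $B$ gives $V(\mathcal C_B)=N(B)$ by Lemma~\ref{lemma-role-of-cond}; $N(B)\subset\operatorname{Sing}(B)$; and $\operatorname{Sing}(B)\subset\operatorname{MaxSpec}(B)$ because $B$ is a one-dimensional domain whose localization at $(0)$ is a field. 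Your write-up bundles these together but the content is the same, and correct.
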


\begin{proof}
See \cite[Prop. 16]{BDLPSS}.
\end{proof}

Considering an affine domain $A$ of dimension one over a perfect field
$k$, let $P\in\operatorname{Sing}(A)$. Choose $P$ together with a nonzero
element $g$ in $P$ instead of a test pair as in Definition \ref{def:test-pair}%
. Then, proceeding as before, we get a chain of affine $k$-domains
\[
A\subset A_{1}\subset\dots\subset A_{m}\subset\overline{A}%
\]
such that $A_{m}$ is a local contribution to $\overline{A}$ at $P$.

\begin{remark}
Given $A$ as above, a finite ring extension $A\subset A^{\prime}$, and a prime
$P\in\operatorname{Sing}(A)$, let $Q\in\operatorname{Sing}(A)$ be a prime
different from $P$, and let $J^{\prime}=\sqrt{PA^{\prime}}$. Then
\begin{align*}
\Hom_{A^{\prime}}(J^{\prime},J^{\prime})_{Q}  &  \cong\Hom_{A^{\prime}_{Q}%
}(J^{\prime}_{Q},J^{\prime}_{Q})\\
&  \cong\Hom_{A^{\prime}_{Q}}(A^{\prime}_{Q},A^{\prime}_{Q})\cong A^{\prime
}_{Q}%
\end{align*}
(see \cite[Proposition 2.10]{Eis}). Inductively, this shows that the algorithm
outlined above computes a minimal local contribution to $\overline{A}$ at $P$.
Note that such a contribution is uniquely determined since, by definition, its
localization at each $P\in\operatorname*{Spec}(A)$ is determined.
\end{remark}

\begin{example}
In the case of Example \ref{ex a4 E8}, there are two singularities
$P_{1}=\left\langle x,y\right\rangle $ and $P_{2}=\left\langle
x,y-1\right\rangle $. For $P_{1}$, the local normalization algorithm yields
$\overline{A_{P_{1}}}=(\frac{1}{d_{1}}U_{1})_{P_{1}}$, where
\[
d_{1}=x^{2}\;{\text{ and }}\;U_{1}=\left\langle x^{2},\text{ }y(y-1)^{3}%
\right\rangle _{A}\text{.}%
\]
For $P_{2}$, we get\thinspace\ $\overline{A_{P_{2}}}=(\frac{1}{d_{2}}%
U_{2})_{P_{2}}$, where%
\[
d_{2}=x^{3}\;{\text{ and }}\;U_{2}=\left\langle x^{3},\text{ }x^{2}%
y^{2}\left(  y-1\right)  ,\text{ }y^{2}\left(  y-1\right)  ^{2}\right\rangle
_{A}\text{.}%
\]
Combining the local contributions, we get%
\[
\frac{1}{d}U=\frac{1}{d_{1}}U_{1}+\frac{1}{d_{2}}U_{2}%
\]
with $d=x^{3}$ and
\[
U=\left\langle x^{3},\text{ }xy(y-1)^{3},\text{ }x^{2}y^{2}\left(  y-1\right)
,\text{ }y^{2}\left(  y-1\right)  ^{2}\right\rangle _{A}\text{.}%
\]
Note that $U$ coincides with the ideal $U_{3}$ computed in Example
\ref{ex a4 E8}.
\end{example}

\begin{notation}
In our applications, $A$ will always be the coordinate ring $k[C]=k\left[
X_{1},...,X_{r}\right]  /I(C)$ of an integral affine curve $C\subset
\mathbb{A}_{k}^{r}$ over a perfect field $k$. Given a 
point\footnote{The term \emph{point} will always refer to a closed point.} $P\in C$,
by abuse of notation, if $I\subset k[x_{1}%
,\dots,x_{r}]$ is an ideal properly containing $I(C)$, we will write $I_{P}$
for the ideal of the local ring $\mathcal{O}_{C,P}$ obtained by mapping $I$ to
$k[C]$ and localizing at $P$. Likewise for the homogeneous localization of a
homogeneous ideal in the projective case.
\end{notation}

\section{Adjoint ideals\label{Sec adjoint ideals}}

Let $k$ be a field, and let $\Gamma\subset\mathbb{P}_{k}^{r}$ be an integral 
non-degenerate projective curve. Write
$S=k[X_{0},...,X_{r}]$ for the homogeneous coordinate ring of $\mathbb{P}%
_{k}^{r}$, $I(\Gamma)\subset S$ for the homogeneous ideal of $\Gamma$,
$k[\Gamma]=S/I(\Gamma)$ for the homogeneous coordinate ring of $\Gamma$, and
$\operatorname{Sing}(\Gamma)$ for the singular locus of $\Gamma$.

Let $\pi:\overline{\Gamma}\rightarrow\Gamma$ be the normalization map, let $P$
be a point of $\Gamma$, and let $\mathcal{O}_{\Gamma,P}$ be the local ring of
$\Gamma$ at $P$. Then the normalization $\overline{\mathcal{O}_{\Gamma,P}}$ is
a semi-local ring whose maximal ideals correspond to the points of
$\overline{\Gamma}$ lying over $P$. Furthermore, $\overline{\mathcal{O}%
_{\Gamma,P}}$ is finite over $\mathcal{O}_{\Gamma,P}$ and, thus, a
finite-dimensional $k$-vector space. The dimension
\[
\delta_{P}(\Gamma)=\delta(\mathcal{O}_{\Gamma,P})=\dim_{k}\overline
{\mathcal{O}_{\Gamma,P}}/\mathcal{O}_{\Gamma,P}%
\]
is called the \emph{delta invariant} of $\;\!\Gamma$ at $P$.
The \emph{arithmetic genus} of $\Gamma$ is $p_{a}(\Gamma)=1-\operatorname*{P}%
_{\Gamma}(0)$, where $\operatorname*{P}_{\Gamma}$ is the Hilbert polynomial of
$k[\Gamma]$. Making use of the (global) \emph{delta invariant}
\[
\delta(\Gamma)={\textstyle\sum\nolimits_{P\in\operatorname*{Sing}(\Gamma)}%
}\delta_{P}(\Gamma)\
\]
of $\Gamma$, the \emph{geometric genus} $p(\Gamma)$ of $\Gamma$ is given by%
\[
p(\Gamma)=p(\overline{\Gamma})=p_{a}(\Gamma)-\delta(\Gamma)
\]
(see \cite{Hironaka}). If $\Gamma$ is a plane curve of degree $n$, we have
$p_{a}(\Gamma)=\binom{n-1}{2}$.

Following the presentation in \cite{Chiarli}, we now recall the definition and 
characterization of adjoint ideals due to \cite{ArbarelloCiliberto} and \cite{Ciliberto}.
Let
$I=\bigoplus_{m\geq0}I_{m}\subset S=k[X_{0},...,X_{r}]$ be a saturated
homogeneous ideal properly containing $I(\Gamma)$.
Pulling back $\operatorname*{Proj}(S/I)$ via $\pi$, we get an effective
divisor $\Delta(I)$ on $\overline{\Gamma}$. Let $H$ be a divisor on
$\overline{\Gamma}$ given as the pullback of a hyperplane in
$\mathbb{P}_{k}^{r}$. Then, since any divisor on $\overline{\Gamma}$ cut out
by a homogeneous polynomial in $I$ is of the form $D+\Delta(I)$ for some
effective divisor $D$, we have natural linear maps
\[
\varrho_{m}:I_{m}\rightarrow H^{0}\left(  \overline{\Gamma},\mathcal{O}%
_{\overline{\Gamma}}\left(  mH-\Delta(I)\right)  \right)  ,
\]
for all $m\geq0$.

\begin{remark}
\label{rmk kernel rho}
Consider the exact sequence
\[
0\rightarrow\widetilde{I}\mathcal{O}_{\Gamma}\rightarrow\pi_{\ast}%
(\widetilde{I}\mathcal{O}_{\overline{\Gamma}})\rightarrow\mathcal{F}%
\rightarrow0,
\]
where $\widetilde{I}$ is the ideal sheaf associated to $I$, and $\mathcal{F}$ is the cokernel.
Taking global sections, we get, for $m\gg0$, the exact sequence
\[
0\rightarrow H^{0}\big(  \Gamma,\widetilde{I}\mathcal{O}_{\Gamma}(m)\big)
\rightarrow H^{0}\big(  \overline{\Gamma},\widetilde{I}\mathcal{O}%
_{\overline{\Gamma}}(mH)\big)  \rightarrow H^{0}\left(  \Gamma
,\mathcal{F}\right)  \rightarrow0.
\]
Indeed, $\mathcal{F}$ has finite support and, since the normalization map $\pi$ is finite, we have  $
H^{0}\big(  \overline{\Gamma},\widetilde{I}\mathcal{O}_{\overline{\Gamma}}(mH)\big) \cong 
H^{0} \big( {\Gamma}, \pi_{\ast}(\widetilde{I}\mathcal{O}_{\overline{\Gamma}})(m)\big)$. 
Since $\widetilde{I}\mathcal{O}_{\overline{\Gamma}}(mH)=\mathcal{O}_{\overline{\Gamma}}\big(  mH-\Delta
(I)\big)  $ and, for $m\gg0$, $H^{0}\big(  \Gamma,\widetilde{I}\mathcal{O}_{\Gamma
}(m)\big)  =I_{m}/I(\Gamma)_{m}$, we get, for $m\gg0$, the exact sequence
\[
0\rightarrow I_{m}/I(\Gamma)_{m}\overset{\overline{\varrho_{m}}}{\rightarrow
}H^{0}\left(  \overline{\Gamma},\mathcal{O}_{\overline{\Gamma}}\left(
mH-\Delta(I)\right)  \right)  \rightarrow H^{0}\left(  \Gamma,\mathcal{F}%
\right)  \rightarrow0.
\]
In particular, for $m\gg0$,
\[
\ker(\varrho_{m})=I(\Gamma)_{m}.
\]
\end{remark}

\begin{definition}
With notation and assumptions as above, the ideal $I$ is called an
\emph{adjoint ideal} of $\Gamma$
if the maps%
\[
\varrho_{m}:I_{m}\rightarrow H^{0}\left(  \overline{\Gamma},\mathcal{O}%
_{\overline{\Gamma}}\left(  mH-\Delta(I)\right)  \right)
\]
are surjective for $m$ large enough.
\end{definition}

As already remarked in the introduction, the existence of adjoint ideals is
classical. Locally, adjoint ideals are characterized by the following criterion:

\begin{theorem}
\label{thm locally extended}The ideal $I$ is an adjoint ideal of $\;\!\Gamma$
iff $I_{P}=I_{P}\overline{\mathcal{O}_{\Gamma,P}}$ for all $P\in
\operatorname*{Sing}(\Gamma)$.
\end{theorem}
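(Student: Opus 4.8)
The plan is to reduce the surjectivity condition defining an adjoint ideal to the vanishing of the cokernel sheaf $\mathcal{F}$ from Remark~\ref{rmk kernel rho}, and then to analyze $\mathcal{F}$ one stalk at a time.

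First I would invoke Remark~\ref{rmk kernel rho}: for $m\gg0$ the map $\varrho_m$ factors as $I_m\twoheadrightarrow I_m/I(\Gamma)_m\xrightarrow{\overline{\varrho_m}}H^0(\overline{\Gamma},\mathcal{O}_{\overline{\Gamma}}(mH-\Delta(I)))$ with $\overline{\varrho_m}$ injective, so $\varrho_m$ and $\overline{\varrho_m}$ have the same image, and that image has cokernel $H^0(\Gamma,\mathcal{F})$. Since $H^0(\Gamma,\mathcal{F})$ does not depend on $m$, the maps $\varrho_m$ are surjective for $m$ large enough if and only if $H^0(\Gamma,\mathcal{F})=0$. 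Thus it remains to show that $H^0(\Gamma,\mathcal{F})=0$ is equivalent to the asserted local condition.

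Here $\mathcal{F}=\pi_{\ast}(\widetilde{I}\mathcal{O}_{\overline{\Gamma}})/\widetilde{I}\mathcal{O}_{\Gamma}$ is coherent, and since $\pi$ is an isomorphism over the complement of $\operatorname{Sing}(\Gamma)$, its support is contained in the finite set $\operatorname{Sing}(\Gamma)$. Hence $H^0(\Gamma,\mathcal{F})=\bigoplus_{P\in\operatorname{Sing}(\Gamma)}\mathcal{F}_P$, so $H^0(\Gamma,\mathcal{F})=0$ iff $\mathcal{F}_P=0$ for every $P\in\operatorname{Sing}(\Gamma)$. To compute $\mathcal{F}_P$, note that taking stalks is exact, that $(\widetilde{I}\mathcal{O}_{\Gamma})_P=I_P$ in the sense of our Notation, and that---because $\pi$ is finite, hence affine---over an affine neighborhood $U=\operatorname{Spec}R$ of $P$ with $\pi^{-1}(U)=\operatorname{Spec}\overline{R}$ the sheaf $\pi_{\ast}(\widetilde{I}\mathcal{O}_{\overline{\Gamma}})|_U$ is the one associated to the $R$-module $I\overline{R}$; localizing at $P$ and using that normalization commutes with localization gives $(\pi_{\ast}(\widetilde{I}\mathcal{O}_{\overline{\Gamma}}))_P=I_P\overline{\mathcal{O}_{\Gamma,P}}$. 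As $\mathcal{O}_{\Gamma,P}$ is a domain sitting inside $\overline{\mathcal{O}_{\Gamma,P}}$, the induced map is the natural inclusion $I_P\hookrightarrow I_P\overline{\mathcal{O}_{\Gamma,P}}$, so $\mathcal{F}_P=I_P\overline{\mathcal{O}_{\Gamma,P}}/I_P$, which vanishes precisely when $I_P=I_P\overline{\mathcal{O}_{\Gamma,P}}$. Combining the two reductions proves the theorem.

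The step that needs the most care is the identification $(\pi_{\ast}(\widetilde{I}\mathcal{O}_{\overline{\Gamma}}))_P\cong I_P\overline{\mathcal{O}_{\Gamma,P}}$: one must genuinely use finiteness of $\pi$ to interchange the pushforward with localization, together with the compatibility of normalization with localization. The rest---that $\mathcal{F}$ has zero-dimensional support, hence that its global sections split as the direct sum of its stalks---is routine once one knows that $\pi$ restricts to an isomorphism over the smooth locus.
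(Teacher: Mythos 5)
Your proof is correct and follows essentially the same route as the paper's: both invoke Remark \ref{rmk kernel rho} to identify $\operatorname{coker}\varrho_m$ with $H^0(\Gamma,\mathcal{F})$ for $m\gg0$, and both then reduce to checking the stalks of $\mathcal{F}$, concluding that $h^0(\Gamma,\mathcal{F})=\sum_{P\in\operatorname{Sing}(\Gamma)}\ell(I_P\overline{\mathcal{O}_{\Gamma,P}}/I_P)$ vanishes iff $I_P\overline{\mathcal{O}_{\Gamma,P}}=I_P$ for all singular $P$. You merely spell out in more detail the identification $\mathcal{F}_P\cong I_P\overline{\mathcal{O}_{\Gamma,P}}/I_P$ (via finiteness of $\pi$ and compatibility of normalization with localization), which the paper states without elaboration.
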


\begin{proof}
Using the notation of Remark \ref{rmk kernel rho}, we have, for $m\gg0$,%
\[
\dim_{k}\operatorname*{coker}\varrho_{m}=h^{0}\left(  \Gamma,\mathcal{F}%
\right)  =\sum_{P\in\operatorname*{Sing}(\Gamma)}\ell(I_{P}\overline
{\mathcal{O}_{\Gamma,P}}/I_{P})\text{.}%
\]
Hence, $\varrho_{m}$ is surjective iff $I_{P}\overline{\mathcal{O}_{\Gamma,P}%
}=I_{P}$ for all $P\in\operatorname*{Sing}(\Gamma)$.
\end{proof}

\begin{corollary}
\label{cor supp1}If $I$ is an adjoint ideal of $\Gamma$ and $P\in\operatorname*{Sing}%
(\Gamma)$, then $I_{P}\subsetneqq\mathcal{O}_{\Gamma,P}$.
\end{corollary}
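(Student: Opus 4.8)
The plan is to read off the statement directly from the local characterization of adjoint ideals in Theorem~\ref{thm locally extended}, combined with the fact recorded in Remark~\ref{rem:nnlocus-singlocus-dim-one} that a one-dimensional Noetherian local ring is normal if and only if it is regular. No genuinely new work is required; the proof is a short argument by contradiction.

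First I would invoke Theorem~\ref{thm locally extended}: since $I$ is an adjoint ideal of $\Gamma$ and $P\in\operatorname{Sing}(\Gamma)$, we have $I_{P}=I_{P}\overline{\mathcal{O}_{\Gamma,P}}$, so in particular $I_{P}$ is stable under multiplication by $\overline{\mathcal{O}_{\Gamma,P}}$. Next I would assume, for contradiction, that $I_{P}=\mathcal{O}_{\Gamma,P}$. Then
\[
\overline{\mathcal{O}_{\Gamma,P}}=\mathcal{O}_{\Gamma,P}\cdot\overline{\mathcal{O}_{\Gamma,P}}=I_{P}\overline{\mathcal{O}_{\Gamma,P}}=I_{P}=\mathcal{O}_{\Gamma,P},
\]
so $\mathcal{O}_{\Gamma,P}$ is normal. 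Since $\Gamma$ is a curve, $\mathcal{O}_{\Gamma,P}$ is a reduced Noetherian local ring of dimension one, hence normal implies regular by the fact cited in Remark~\ref{rem:nnlocus-singlocus-dim-one}. This contradicts $P\in\operatorname{Sing}(\Gamma)$, and therefore $I_{P}\subsetneqq\mathcal{O}_{\Gamma,P}$.

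The only points that need a little care are bookkeeping ones: that the key identity $I_{P}=I_{P}\overline{\mathcal{O}_{\Gamma,P}}$ is available exactly at the points $P\in\operatorname{Sing}(\Gamma)$, which is precisely the hypothesis of the corollary, and that $\mathcal{O}_{\Gamma,P}$ is of pure dimension one so that the equivalence ``normal $\Longleftrightarrow$ regular'' applies. There is no serious obstacle beyond these observations.
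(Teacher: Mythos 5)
Your proof is correct and takes essentially the same route as the paper: assume $I_{P}=\mathcal{O}_{\Gamma,P}$ for contradiction and invoke Theorem~\ref{thm locally extended}. The paper phrases the contradiction as $I_{P}\subsetneqq I_{P}\overline{\mathcal{O}_{\Gamma,P}}$ (leaving the use of ``not normal at a singular point'' implicit), whereas you unpack the same fact via Remark~\ref{rem:nnlocus-singlocus-dim-one}; this is a presentational difference only.
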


\begin{proof}
Suppose $I_{P}=\mathcal{O}_{\Gamma,P}$. Then $I_{P}\subsetneqq I_{P}\overline
{\mathcal{O}_{\Gamma,P}}$, a contradiction to Theorem \ref{thm locally extended}.
\end{proof}

\begin{corollary}
The support of $\operatorname*{Proj}(S/I)$ contains $\operatorname*{Sing}%
(\Gamma)$.
\end{corollary}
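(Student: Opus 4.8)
The plan is to deduce the statement directly from Corollary~\ref{cor supp1}, the point being merely to unwind what it means for a point to lie in the support of $\operatorname*{Proj}(S/I)$. Since $I$ properly contains $I(\Gamma)$, the subscheme $\operatorname*{Proj}(S/I)\subset\mathbb{P}_{k}^{r}$ is a closed subscheme of $\Gamma$, and its support is the vanishing locus $V(I)$. So it suffices to show that every $P\in\operatorname*{Sing}(\Gamma)$ lies in $V(I)$; note that the adjoint hypothesis on $I$ is exactly what feeds into Corollary~\ref{cor supp1}, and it will be used only through that corollary.

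First I would record the elementary translation between the homogeneous picture and the local one: for a (closed) point $P\in\Gamma$ with corresponding homogeneous prime $\mathfrak{p}_{P}\subset S$, we have $P\in V(I)$ iff $I\subset\mathfrak{p}_{P}$, and, passing to the local ring $\mathcal{O}_{\Gamma,P}$ as in the Notation at the end of Section~\ref{sec normalization}, this is in turn equivalent to $I_{P}$ being a proper ideal of $\mathcal{O}_{\Gamma,P}$. Here I use only that $\mathcal{O}_{\Gamma,P}$ is local, so that a proper ideal is automatically contained in its maximal ideal.

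Then I would simply invoke Corollary~\ref{cor supp1}: for $P\in\operatorname*{Sing}(\Gamma)$ one has $I_{P}\subsetneqq\mathcal{O}_{\Gamma,P}$, hence $P\in V(I)=\operatorname*{Supp}\operatorname*{Proj}(S/I)$, which is what we wanted. I do not anticipate any genuine obstacle here: the only step requiring a moment of care is the bookkeeping between the homogeneous ideal $I$ and its localization $I_{P}$, which is precisely the convention already fixed, and the mathematical content of the statement is entirely contained in Corollary~\ref{cor supp1}.
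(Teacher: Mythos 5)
Your argument is correct and is exactly the paper's: the paper's proof is the one-liner "Follows immediately from Corollary \ref{cor supp1}," and your proposal simply spells out the standard bookkeeping between $V(I)$, $\operatorname*{Supp}\operatorname*{Proj}(S/I)$, and the localization $I_{P}$ that makes the deduction immediate.
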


\begin{proof}
Follows immediately from Corollary \ref{cor supp1}.
\end{proof}




\begin{theorem}
\label{thm adjoint ideal locally conductor} There is a unique largest
homogeneous ideal $\mathfrak{G}\subset S$ which satisfies%
\[
\mathfrak{G}_{P}=\mathcal{C}_{\mathcal{O}_{\Gamma,P}}\;\text{ for all }%
\;P\in\operatorname*{Sing}(\Gamma).
\]
The ideal $\mathfrak{G}$ is an adjoint ideal of \mbox{$\;\!\Gamma$} containing
all other adjoint ideals of \mbox{$\;\!\Gamma$}. In particular, $\mathfrak{G}$
is saturated and $\operatorname*{Proj}(S/\mathfrak{G})$ is supported on
$\operatorname*{Sing}(\Gamma)$.
\end{theorem}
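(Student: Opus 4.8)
The plan is to obtain $\mathfrak{G}$ as the homogeneous ideal cut out by the conductor ideal sheaf of the normalization map $\pi\colon\overline{\Gamma}\to\Gamma$, and then to deduce every assertion from the stalks of this sheaf together with Theorem \ref{thm locally extended}.

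First I would assemble the \emph{conductor sheaf}. On each affine chart $C$ of $\Gamma$ the conductor $\mathcal{C}_{k[C]}$ is an ideal of $k[C]$, and these patch to a coherent ideal sheaf $\mathcal{J}\subset\mathcal{O}_{\Gamma}$; the compatibility needed here is that the conductor commutes with localization, i.e. for a reduced affine $k$-domain $A$ of dimension one and $P\in\operatorname{Spec}(A)$ one has $\overline{A_{P}}=(\overline{A})_{P}$ and, since $\overline{A}/A$ is a finite $A$-module, $(\mathcal{C}_{A})_{P}=\operatorname{Ann}_{A_{P}}(\overline{A_{P}}/A_{P})=\mathcal{C}_{A_{P}}$. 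Thus the stalk of $\mathcal{J}$ at any point $P$ of $\Gamma$ is $\mathcal{C}_{\mathcal{O}_{\Gamma,P}}$; at a smooth --- hence, by Remark \ref{rem:nnlocus-singlocus-dim-one}, normal --- point this stalk is the whole local ring, while at a singular point it is a proper ideal, so that $V(\mathcal{J})=\operatorname{Sing}(\Gamma)$.

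Next, letting $\mathcal{I}\subset\mathcal{O}_{\mathbb{P}^{r}_{k}}$ be the preimage of $\mathcal{J}$ under $\mathcal{O}_{\mathbb{P}^{r}_{k}}\twoheadrightarrow\mathcal{O}_{\Gamma}$ (a coherent ideal sheaf containing the ideal sheaf of $\Gamma$), I would set
\[
\mathfrak{G}:=\bigoplus_{m\ge 0}H^{0}\big(\mathbb{P}^{r}_{k},\mathcal{I}(m)\big)\subset\bigoplus_{m\ge 0}H^{0}\big(\mathbb{P}^{r}_{k},\mathcal{O}_{\mathbb{P}^{r}_{k}}(m)\big)=S .
\]
By construction $\mathfrak{G}$ is a saturated homogeneous ideal with $I(\Gamma)\subset\mathfrak{G}$ and $\widetilde{\mathfrak{G}}\mathcal{O}_{\Gamma}=\mathcal{J}$, so $\mathfrak{G}_{P}=\mathcal{J}_{P}=\mathcal{C}_{\mathcal{O}_{\Gamma,P}}$ for every point $P$; in particular $\mathfrak{G}$ satisfies the asserted local equality. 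For maximality, let $I\subset S$ be any homogeneous ideal with $I_{P}=\mathcal{C}_{\mathcal{O}_{\Gamma,P}}$ for all $P\in\operatorname{Sing}(\Gamma)$ (our notational convention forces $I(\Gamma)\subset I$; in general replace $I$ by $I+I(\Gamma)$). Comparing stalks, $(\widetilde{I}\mathcal{O}_{\Gamma})_{P}=I_{P}\subset\mathcal{J}_{P}$ at every point $P$ of $\Gamma$ --- an equality at singular points, and trivial at smooth ones where $\mathcal{J}_{P}=\mathcal{O}_{\Gamma,P}$. Hence $\widetilde{I}\subset\mathcal{I}$ as ideal sheaves of $\mathbb{P}^{r}_{k}$, and therefore $I_{m}\subset H^{0}(\mathbb{P}^{r}_{k},\widetilde{I}(m))\subset H^{0}(\mathbb{P}^{r}_{k},\mathcal{I}(m))=\mathfrak{G}_{m}$ for all $m$, i.e. $I\subset\mathfrak{G}$. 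This proves existence and uniqueness of the largest such ideal.

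Finally, since $\mathfrak{G}_{P}=\mathcal{C}_{\mathcal{O}_{\Gamma,P}}$ is by definition an ideal of $\overline{\mathcal{O}_{\Gamma,P}}$, we get $\mathfrak{G}_{P}\overline{\mathcal{O}_{\Gamma,P}}=\mathfrak{G}_{P}$ for all $P\in\operatorname{Sing}(\Gamma)$, so $\mathfrak{G}$ is an adjoint ideal by Theorem \ref{thm locally extended}. If $I$ is any adjoint ideal of $\Gamma$, the same theorem gives that $I_{P}$ is an ideal of $\overline{\mathcal{O}_{\Gamma,P}}$ contained in $\mathcal{O}_{\Gamma,P}$ for each $P\in\operatorname{Sing}(\Gamma)$, hence $I_{P}\subset\mathcal{C}_{\mathcal{O}_{\Gamma,P}}$ by maximality of the conductor, and the argument of the previous paragraph yields $I\subset\mathfrak{G}$. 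Moreover $\mathfrak{G}$ is saturated by construction, and $\operatorname{Proj}(S/\mathfrak{G})$ is supported on $V(\mathcal{J})=\operatorname{Sing}(\Gamma)$ --- the inclusion $\operatorname{Sing}(\Gamma)\subset\operatorname{Supp}\operatorname{Proj}(S/\mathfrak{G})$ also being a direct consequence of Corollary \ref{cor supp1}. I expect the only delicate points to be the two ingredients used above, namely that the conductor is compatible with localization (so that $\mathcal{J}$ has the predicted stalks) and that a coherent ideal sheaf on $\mathbb{P}^{r}_{k}$ is recovered up to saturation from its twisted global sections; the remaining steps are formal consequences of Theorem \ref{thm locally extended} and of the defining property of the conductor.
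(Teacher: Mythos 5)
Your proof is correct and follows essentially the same approach as the paper: both construct $\mathfrak{G}$ as the saturated homogeneous ideal associated to the conductor ideal sheaf on $\Gamma$ (the paper defines it as $\operatorname{Ann}_{\mathcal{O}_\Gamma}(\pi_*\mathcal{O}_{\overline{\Gamma}}/\mathcal{O}_\Gamma)$, you glue local conductors using compatibility with localization — these give the same sheaf), then invoke Theorem \ref{thm locally extended} for the adjoint property and the maximality of the conductor among ideals common to $\mathcal{O}_{\Gamma,P}$ and $\overline{\mathcal{O}_{\Gamma,P}}$ for the containment of all other adjoint ideals. You spell out a bit more carefully the passage from the sheaf on $\Gamma$ to the ideal sheaf in $\mathcal{O}_{\mathbb{P}^r_k}$ and the recovery of a saturated ideal from twisted global sections, but the argument is the same.
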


\begin{proof}
For the conductor ideal sheaf $\mathcal{C}=\operatorname*{Ann}_{\mathcal{O}%
_{\Gamma}}(\pi_{\ast}\mathcal{O}_{\overline{\Gamma}}/\mathcal{O}_{\Gamma})$ on $\Gamma$, we
have $\mathcal{C}_{P}=\mathcal{C}_{\mathcal{O}_{\Gamma,P}}$ for all
$P\in\Gamma$. If $j:\Gamma\rightarrow\mathbb{P}_{k}^{r}$ is the inclusion,
then the graded $S$-module $\mathfrak{G}=%
{\textstyle\bigoplus\nolimits_{n\in\mathbb{Z}}}
H^{0}(\mathbb{P}_{k}^{r},j_{\ast}\mathcal{C}(n))$ associated to $j_{\ast
}\mathcal{C}$ is the unique largest homogeneous ideal with $\mathfrak{G}%
_{P}=\mathcal{C}_{\mathcal{O}_{\Gamma,P}}$ for all $P\in\operatorname*{Sing}%
(\Gamma)$. By Theorem \ref{thm locally extended} and the properties of the
conductor, $\mathfrak{G}$ is an adjoint ideal. Moreover, if $I$ is any other
adjoint ideal, then $I_{P}\subset\mathfrak{G}_{P}$ for all $P\in\Gamma$, hence
$I\subset\mathfrak{G}$.
\end{proof}

\begin{definition}
With notation as in Theorem \ref{thm adjoint ideal locally conductor}, the
ideal $\mathfrak{G}$ is called the \emph{Gorenstein adjoint ideal} of
$\;\!\Gamma$. We also write $\mathfrak{G}(\Gamma)=\mathfrak{G}$.
\end{definition}

For repeated subsequent use, we introduce the following notation:


\begin{notation}
\label{rem setup curve in PPr} Let $\Gamma\subset\mathbb{P}_{k}^{r}$ be a
curve as above with Gorenstein adjoint ideal $\;\!\mathfrak{G}$. Let $C$ be
the affine part of $\Gamma$ with respect to the chart
\[
\mathbb{A}_{k}^{r}\hookrightarrow\mathbb{P}_{k}^{r}\text{, }\left(
X_{1},...,X_{r}\right)  \mapsto\left(  1:X_{1}:\cdots:X_{r}\right)
\]
let $I(C)\subset k\left[  X_{1},...,X_{r}\right]  $ be the ideal of $C$, let
$k[C]=k[x_{1},...,x_{r}]=k\left[  X_{1},...,X_{r}\right]  /I(C)$ be its
coordinate ring, and let $\operatorname{Sing}(C)$ be its set of singular points.
\end{notation}

\begin{proposition}
\label{Cor adjoint ideal from conductor}Assume $\Gamma\subset\mathbb{P}%
_{k}^{r}$ is a curve as in Notation \ref{rem setup curve in PPr}, with affine
part $C$. Let $\;\!\overline{\mathfrak{G}}$ be the ideal of $k[C]$ obtained by
dehomogenizing $\;\!\mathfrak{G}$ with respect to $X_{0}$ and mapping the
result to $k[C]$. Then%
\[
\overline{{\mathfrak{G}}}=\mathcal{C}_{k[C]}\text{.}%
\]
If\, $\Gamma$ has no singularities at infinity\footnote{If $k$ is infinite, 
this assumption can always be achieved by a projective automorphism defined over $k$.
Otherwise,  we may have to replace $k$ by an extension field of $k$.} and $\mathcal{C}%
_{k[C]}=\left\langle g_{i}(x_{1},...,x_{r})\mid i\right\rangle _{k[C]}$ with
polynomials $g_{i}\in k[X_{1},...,X_{r}]$, then $\mathfrak{G}$ is the
homogenization of
\[
\left\langle g_{i}(X_{1},...,X_{r})\mid i\right\rangle _{k[X_{1},...,X_{r}%
]}+I(C)
\]
with respect to $X_{0}$.
\end{proposition}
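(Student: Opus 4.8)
The plan is to prove the two assertions in turn, reducing the first to a comparison of localizations at every closed point of $C$, and the second to the statement that the hypothesis "no singularities at infinity'' forces $\mathfrak{G}$ to be saturated with respect to $X_{0}$ — the step I expect to be the real obstacle.

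\textbf{First assertion.} I would show that $\overline{\mathfrak{G}}$ and $\mathcal{C}_{k[C]}$ are ideals of the affine $k$-domain $k[C]$ with the same localization at every prime $P\in\operatorname{Spec}(k[C])$; since an ideal of a Noetherian ring is determined by its localizations, this forces $\overline{\mathfrak{G}}=\mathcal{C}_{k[C]}$. For the right-hand side: $k[C]$ is normalization-finite by Emmy Noether's theorem, so normalization commutes with localization, and the conductor (being an annihilator of the finitely generated module $\overline{k[C]}/k[C]$) commutes with localization along $k[C]\subset\overline{k[C]}$; hence $(\mathcal{C}_{k[C]})_{P}=\mathcal{C}_{\mathcal{O}_{C,P}}$. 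For the left-hand side: dehomogenizing with respect to $X_{0}$ does not change the local ring at a point $P$ of $C$, so inside $\mathcal{O}_{C,P}=\mathcal{O}_{\Gamma,P}$ the localization $(\overline{\mathfrak{G}})_{P}$ is just $\mathfrak{G}_{P}$; by Theorem \ref{thm adjoint ideal locally conductor} this equals $\mathcal{C}_{\mathcal{O}_{\Gamma,P}}$ when $P\in\operatorname{Sing}(\Gamma)$, and equals $\mathcal{O}_{\Gamma,P}$ otherwise, since $\operatorname{Proj}(S/\mathfrak{G})$ is supported on $\operatorname{Sing}(\Gamma)$. Finally, by Remark \ref{rem:nnlocus-singlocus-dim-one} a one-dimensional local ring is normal iff regular, so $\mathcal{C}_{\mathcal{O}_{C,P}}=\mathcal{O}_{C,P}$ at a nonsingular $P$; thus the two localizations agree at every $P$, and the first assertion follows.

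\textbf{Second assertion.} Write $(-)^{\mathrm{deh}}$ and $(-)^{h}$ for dehomogenization and homogenization with respect to $X_{0}$. I would use the standard fact that $(I^{\mathrm{deh}})^{h}=(I:X_{0}^{\infty})$ for every homogeneous ideal $I\subset S$; applying it to $\mathfrak{G}$ reduces the claim to two points: (a) $\mathfrak{G}^{\mathrm{deh}}=J:=\langle g_{i}(X_{1},\dots,X_{r})\mid i\rangle+I(C)$, and (b) $(\mathfrak{G}:X_{0}^{\infty})=\mathfrak{G}$. Point (a) follows from the first assertion: since $\mathfrak{G}\supseteq I(\Gamma)$ and the dehomogenization of $I(\Gamma)$ is $I(C)$, we have $\mathfrak{G}^{\mathrm{deh}}\supseteq I(C)$, and the image of $\mathfrak{G}^{\mathrm{deh}}$ in $k[C]$ is by definition $\overline{\mathfrak{G}}=\mathcal{C}_{k[C]}=\langle g_{i}\mid i\rangle_{k[C]}$; as $\mathfrak{G}^{\mathrm{deh}}$ contains the kernel $I(C)$ of $k[X_{1},\dots,X_{r}]\to k[C]$, it must equal the full preimage $\langle g_{i}(X_{1},\dots,X_{r})\mid i\rangle+I(C)=J$.

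Point (b) is the step where "no singularities at infinity'' enters, and is the main obstacle. I would argue: $\mathfrak{G}$ is saturated by Theorem \ref{thm adjoint ideal locally conductor}, so the irrelevant ideal is not an associated prime of $S/\mathfrak{G}$; and since $\operatorname{Proj}(S/\mathfrak{G})$ is a finite set of points contained in $\operatorname{Sing}(\Gamma)$, every associated prime $\mathfrak{p}$ of $S/\mathfrak{G}$ satisfies $\dim S/\mathfrak{p}=1$ and cuts out a point of $\operatorname{Sing}(\Gamma)$. By hypothesis none of these points lies on $\{X_{0}=0\}$, so $X_{0}\notin\mathfrak{p}$ for every associated prime $\mathfrak{p}$; hence $X_{0}$ is a nonzerodivisor on $S/\mathfrak{G}$ and $(\mathfrak{G}:X_{0}^{\infty})=\mathfrak{G}$. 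Combining (a) and (b) gives $\mathfrak{G}=(\mathfrak{G}^{\mathrm{deh}})^{h}=J^{h}$, which is the assertion. (If $\Gamma$ is smooth the statement is vacuous, all ideals in sight being the respective total rings.)
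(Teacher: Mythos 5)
Your proof is correct and takes essentially the same approach as the paper: localize to identify $\overline{\mathfrak{G}}$ with $\mathcal{C}_{k[C]}$ using Theorem \ref{thm adjoint ideal locally conductor} and commutation of the conductor with localization, then recover $\mathfrak{G}$ from its dehomogenization using saturation and the absence of singularities (hence of support of $\mathfrak{G}$) on $\{X_0=0\}$. The paper's version is terser — it asserts the first equality of localizations directly from the definition of $\mathfrak{G}$ as the graded module of the conductor ideal sheaf, whereas you split into singular/nonsingular points — but the underlying reasoning is identical.
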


\begin{proof}
The first statement is obtained by localizing at the points of $C$:
\[
\overline{{\mathfrak{G}}}_{P}=\mathcal{C}_{\mathcal{O}_{C,P}}=(\mathcal{C}%
_{k[C]})_{P}\;\text{ for each }\;P\in C\text{.}%
\]
Here, the first equality is clear from the definition of $\mathfrak{G}$ (see
Theorem \ref{thm adjoint ideal locally conductor}). The second equality holds
since forming the conductor commutes with localization since $k[C]$ is
normalization-finite (see \cite[Ch. V, \S \ 5]{ZariskiSamuel}).

The second statement of the proposition follows from the first one since there
are no singularities at infinity, $\mathfrak{G}$ is saturated, and the support
of $\mathfrak{G}$ is contained in $C$.

\end{proof}

We take a moment to specialize to plane curves.

\begin{remark}
\label{rem hilbert function adjoint ideal} Assume $\Gamma$ is a plane curve.
Then, by Max Noether's Fundamentalsatz, the maps $\varrho_{m}:\mathfrak{G}%
_{m}\rightarrow H^{0}\left(  \overline{\Gamma},\mathcal{O}_{\overline{\Gamma}%
}\left(  mH-\Delta(\mathfrak{G})\right)  \right)  $ are surjective for
\emph{all} $m$. Referring to each homogeneous polynomial in $\mathfrak{G}$ not
contained in $I(\Gamma)$ as an \emph{adjoint curve} to $\Gamma$, this means
that residual to $\Delta(\mathfrak{G})$, the adjoint curves of any degree $m$
cut out the complete linear series $\mathcal{A}_{m}=\left\vert mH-\Delta
(\mathfrak{G})\right\vert $. See \cite[\S \ 49]{vanderWaerden}.

\end{remark}


\begin{theorem}
\label{thm canonial linear series} Assume $\Gamma$ is a plane curve of degree
$n$. Then, residual to $\Delta(\mathfrak{G})$, the elements of $\mathfrak{G}%
_{n-3}$ cut out the complete canonical linear series. Equivalently,
\begin{equation}
\deg\Delta(\mathfrak{G})=2\delta(\Gamma). \label{equ:degree-delta-plane-case}%
\end{equation}

\end{theorem}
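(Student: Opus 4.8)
First I would set up the numerology. For a plane curve $\Gamma$ of degree $n$, a hyperplane pullback $H$ has degree $\deg H = n$ on $\overline{\Gamma}$, and the canonical divisor satisfies $\deg K_{\overline{\Gamma}} = 2p(\overline{\Gamma})-2$. The claim is that $\mathfrak G_{n-3}$ cuts out, residual to $\Delta(\mathfrak G)$, the full canonical series, equivalently that $(n-3)H - \Delta(\mathfrak G) \sim K_{\overline{\Gamma}}$; comparing degrees gives $n(n-3) - \deg\Delta(\mathfrak G) = 2p(\overline{\Gamma})-2$, and since $p(\overline{\Gamma}) = p_a(\Gamma) - \delta(\Gamma) = \binom{n-1}{2} - \delta(\Gamma)$ one checks $n(n-3) - 2p(\overline{\Gamma}) + 2 = n(n-3) - 2\binom{n-1}{2} + 2\delta(\Gamma) + 2 = 2\delta(\Gamma)$, using $2\binom{n-1}{2} = (n-1)(n-2) = n^2-3n+2$. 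So the linear-equivalence statement and the degree identity \eqref{equ:degree-delta-plane-case} are indeed equivalent, and it suffices to prove the latter.

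Next I would identify $\deg\Delta(\mathfrak G)$ with $2\delta(\Gamma)$ directly. By Theorem \ref{thm adjoint ideal locally conductor} and Proposition \ref{Cor adjoint ideal from conductor}, after dehomogenizing we have $\mathfrak G_P = \mathcal C_{\mathcal O_{\Gamma,P}}$ at every singular point $P$; the conductor $\mathcal C_{\mathcal O_{\Gamma,P}}$ is, by construction, an ideal of both $\mathcal O_{\Gamma,P}$ and $\overline{\mathcal O_{\Gamma,P}}$. The divisor $\Delta(\mathfrak G)$ is the pullback of $\operatorname{Proj}(S/\mathfrak G)$ along $\pi$, so its local contribution at the points of $\overline{\Gamma}$ over $P$ has total degree $\dim_k \overline{\mathcal O_{\Gamma,P}}/\mathfrak G_P\overline{\mathcal O_{\Gamma,P}} = \dim_k \overline{\mathcal O_{\Gamma,P}}/\mathcal C_{\mathcal O_{\Gamma,P}}$. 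The classical fact I want is that for a Gorenstein local ring of dimension one — and $\mathcal O_{\Gamma,P}$ is Gorenstein, being the local ring of a point on a plane curve, hence a complete intersection — the conductor has colength $2\delta_P$ in the normalization, i.e. $\dim_k \overline{\mathcal O_{\Gamma,P}}/\mathcal C = 2\delta_P$. This is where the name "Gorenstein adjoint ideal" comes from; it follows from the symmetry $\dim_k \overline{\mathcal O}/\mathcal O = \dim_k \mathcal O/\mathcal C = \delta_P$ valid for Gorenstein curve singularities (and $\dim_k \overline{\mathcal O}/\mathcal C = \dim_k \overline{\mathcal O}/\mathcal O + \dim_k \mathcal O/\mathcal C$). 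Summing over $P \in \operatorname{Sing}(\Gamma)$ gives $\deg\Delta(\mathfrak G) = \sum_P 2\delta_P(\Gamma) = 2\delta(\Gamma)$, which is \eqref{equ:degree-delta-plane-case}.

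The main obstacle is the clean identification $\dim_k \mathcal O_{\Gamma,P}/\mathcal C_{\mathcal O_{\Gamma,P}} = \delta_P$ for plane-curve singularities; this rests on Gorensteinness (equivalently, on a local duality / residue-pairing argument, or on the explicit structure of the conductor via the different), and I would either cite it (e.g. from Serre's book on local fields, or Gorenstein's original paper) or sketch the duality argument. A secondary point needing care is the passage from the local statement $\mathfrak G_P = \mathcal C_{\mathcal O_{\Gamma,P}}$ to the degree of $\Delta(\mathfrak G)$ on $\overline{\Gamma}$: one must check that the contribution of $\Delta(\mathfrak G)$ concentrated over $P$ is exactly $\dim_k \overline{\mathcal O_{\Gamma,P}}/\mathfrak G_P\overline{\mathcal O_{\Gamma,P}}$, which is immediate from the definition of the pullback divisor and the fact that $\overline{\mathcal O_{\Gamma,P}}$ is the semilocal ring at the preimages of $P$. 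Finally, Max Noether's Fundamentalsatz (invoked in Remark \ref{rem hilbert function adjoint ideal}) already gives surjectivity of $\varrho_{n-3}$, so the "cut out the complete canonical series" phrasing is justified once the degree count places $(n-3)H - \Delta(\mathfrak G)$ in the canonical class.
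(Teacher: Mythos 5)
Your approach is genuinely different from the paper's: the paper proves this theorem only by citing Gorenstein's Theorem~9, whereas you give a direct degree computation using the Gorenstein property of plane-curve singularities ($\dim_k\mathcal{O}_{\Gamma,P}/\mathcal{C}_{\mathcal{O}_{\Gamma,P}}=\delta_P$, hence $\dim_k\overline{\mathcal{O}_{\Gamma,P}}/\mathcal{C}_{\mathcal{O}_{\Gamma,P}}=2\delta_P$) together with the identification $\mathfrak{G}_P=\mathcal{C}_{\mathcal{O}_{\Gamma,P}}$. That degree computation is correct, and it closely parallels the argument the paper itself gives later in Corollary~\ref{cor delta gorenstein adjoint ideal}, which combines the Gorenstein-singularity property with the Riemann--Roch bound of Lemma~\ref{lem deg double point divisor}.

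There is, however, a gap in your reduction of the theorem to the degree formula. You declare the canonical-series claim and \eqref{equ:degree-delta-plane-case} equivalent ``by comparing degrees,'' but degree matching only gives the implication from $(n-3)H-\Delta(\mathfrak{G})\sim K_{\overline{\Gamma}}$ to the degree identity, not the converse; the phrase ``once the degree count places $(n-3)H-\Delta(\mathfrak{G})$ in the canonical class'' conflates degree equality with linear equivalence. The converse implication does hold, but it needs a further input that you do not supply. One way to close the gap: by Max Noether (Remark~\ref{rem hilbert function adjoint ideal}) and $I(\Gamma)_{n-3}=0$ you have
\[
h^0\big((n-3)H-\Delta(\mathfrak{G})\big)=\dim_k\mathfrak{G}_{n-3}=\tbinom{n-1}{2}-h_{S/\mathfrak{G}}(n-3)\geq \tbinom{n-1}{2}-\delta(\Gamma)=p(\Gamma),
\]
where the inequality uses that $\mathfrak{G}$ is the saturated ideal of a length-$\delta(\Gamma)$ scheme, so the Hilbert function $h_{S/\mathfrak{G}}$ is nondecreasing with limit $\delta(\Gamma)$; combined with $\deg\big((n-3)H-\Delta(\mathfrak{G})\big)=2p(\Gamma)-2$ from the degree identity, Riemann--Roch forces $h^0=p(\Gamma)$ and hence $(n-3)H-\Delta(\mathfrak{G})\sim K_{\overline{\Gamma}}$. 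Alternatively one can invoke Rosenlicht--Serre duality (identifying $\pi_*\omega_{\overline{\Gamma}}$ with $\mathcal{C}\cdot\omega_\Gamma$, where $\omega_\Gamma\cong\mathcal{O}_\Gamma(n-3)$ by adjunction on $\mathbb{P}^2$) to obtain the linear equivalence directly, or simply cite Gorenstein as the paper does.
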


\begin{proof}
See \cite[Thm. 9]{Gorenstein}.
\end{proof}

Recall that the dimension of the canonical linear series is $\dim
\mathcal{A}_{n-3}=p(\Gamma)-1$.

\begin{remark}
Assume $\Gamma$ is a plane curve of degree $n$. If $p(\Gamma)=0$, that is,
$\Gamma$ is rational, then $\dim\mathcal{A}_{n-2}=\deg\mathcal{A}_{n-2}=n-2$.
In this case, the image of $\Gamma$ under $\mathcal{A}_{n-2}$ is a rational
normal curve $\Gamma_{n-2}\subset\mathbb{P}_{k}^{n-2}$ of degree $n-2$. Via
the birational morphism $\Gamma_{n-2}\rightarrow\Gamma$, the problem of
parametrizing $\Gamma$ is reduced to parametrizing the smooth curve
$\Gamma_{n-2}$. For the latter, we may successively decrease the degree of the
rational normal curve by $2$ via the anti-canonical linear series. This yields
an isomorphism from $\Gamma_{n-2}$ either to $\mathbb{P}^{1}$ or to a plane
conic, depending on whether $n$ is odd or even. If $\Gamma$ is defined by an
equation over a subfield $L\subset k$, then all computations considered so far
take place over the coefficient field $L$. Parametrizing the conic, however,
may require a quadratic field extension, depending on whether the conic
contains an $L$-rational point or not. See \cite{Boehm} and \cite{BDLSpar} for details.
\end{remark}


By generalizing the formula in Theorem \ref{thm canonial linear series}, we
now derive a characterization of adjoint ideals, which is also valid in the
non-planar case. We use the following notation: If $I\subset S$ is a
homogeneous ideal, write $\deg I=\deg\operatorname*{Proj}(S/I)$. That is,
$\deg I$ is $(\dim I-1)!$ times the leading coefficient of the Hilbert
polynomial of $S/I$.

\begin{lemma}
\label{lem deg double point divisor} Let
$I\subset S$ be a saturated homogeneous ideal with $I(\Gamma)\subsetneqq I$.
Then%
\[
\deg\Delta(I)\leq\deg I+\delta(\Gamma),
\]
and $I$ is an adjoint ideal of $\Gamma$ iff%
\[
\deg\Delta(I)=\deg I+\delta(\Gamma)\text{.}%
\]

\end{lemma}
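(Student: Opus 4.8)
The plan is to work locally at each point of $\Gamma$ and sum up the contributions. The key observation from Remark \ref{rmk kernel rho} is that, for $m \gg 0$, the cokernel of $\varrho_m$ has dimension $\sum_{P \in \operatorname{Sing}(\Gamma)} \ell(I_P\overline{\mathcal{O}_{\Gamma,P}}/I_P)$, and surjectivity of $\varrho_m$ (i.e. $I$ being an adjoint ideal) is equivalent to this sum vanishing, i.e. $I_P = I_P\overline{\mathcal{O}_{\Gamma,P}}$ for all singular $P$. So the strategy is to relate the two quantities $\deg\Delta(I)$ and $\deg I$ to local lengths and show that the difference $\deg I + \delta(\Gamma) - \deg\Delta(I)$ equals exactly this sum of local colength terms, which is $\geq 0$ and vanishes precisely in the adjoint case.

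First I would express all three terms ($\deg\Delta(I)$, $\deg I$, $\delta(\Gamma)$) as sums of local invariants. For $\delta(\Gamma)$ this is its definition: $\delta(\Gamma) = \sum_P \delta_P(\Gamma) = \sum_P \ell(\overline{\mathcal{O}_{\Gamma,P}}/\mathcal{O}_{\Gamma,P})$, the sum over $P \in \operatorname{Sing}(\Gamma)$. For $\deg I$, since $I$ is saturated with $I(\Gamma) \subsetneqq I$, the scheme $\operatorname{Proj}(S/I)$ is zero-dimensional, supported on finitely many points, and $\deg I = \sum_P \ell((S/I)_P) = \sum_P \ell(\mathcal{O}_{\Gamma,P}/I_P)$, where the sum runs over the (finitely many) points in the support of $\operatorname{Proj}(S/I)$. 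For $\deg\Delta(I)$, the divisor $\Delta(I)$ on $\overline{\Gamma}$ is the pullback of $\operatorname{Proj}(S/I)$ via $\pi$, so $\deg\Delta(I) = \sum_P \deg(\Delta(I)|_{\pi^{-1}(P)}) = \sum_P \ell(\overline{\mathcal{O}_{\Gamma,P}}/I_P\overline{\mathcal{O}_{\Gamma,P}})$, again the sum over the support.

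Next, I would assemble these into a pointwise identity. At each point $P$ in the support of $\operatorname{Proj}(S/I)$ (which contains $\operatorname{Sing}(\Gamma)$ by the corollary following Theorem \ref{thm locally extended}, at least when $I$ is adjoint; in general I should be a little careful and let the index set be $\operatorname{Sing}(\Gamma) \cup \operatorname{Supp}\operatorname{Proj}(S/I)$, noting that at a smooth point $P$ not in the support both $\delta_P$ and the colength terms vanish), the chain $I_P \subset I_P\overline{\mathcal{O}_{\Gamma,P}} \subset \overline{\mathcal{O}_{\Gamma,P}}$ together with $\mathcal{O}_{\Gamma,P} \subset \overline{\mathcal{O}_{\Gamma,P}}$ gives
\[
\ell(\overline{\mathcal{O}_{\Gamma,P}}/I_P\overline{\mathcal{O}_{\Gamma,P}}) + \ell(I_P\overline{\mathcal{O}_{\Gamma,P}}/I_P) = \ell(\overline{\mathcal{O}_{\Gamma,P}}/I_P),
\]
while at a smooth point this just reads $\ell(\mathcal{O}_{\Gamma,P}/I_P) = \ell(\mathcal{O}_{\Gamma,P}/I_P)$ with $\delta_P = 0$. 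Now I want to compare $\ell(\overline{\mathcal{O}_{\Gamma,P}}/I_P)$ with $\ell(\mathcal{O}_{\Gamma,P}/I_P) + \delta_P(\Gamma)$: from the inclusion $I_P \subset \mathcal{O}_{\Gamma,P} \subset \overline{\mathcal{O}_{\Gamma,P}}$ we get $\ell(\overline{\mathcal{O}_{\Gamma,P}}/I_P) = \ell(\overline{\mathcal{O}_{\Gamma,P}}/\mathcal{O}_{\Gamma,P}) + \ell(\mathcal{O}_{\Gamma,P}/I_P) = \delta_P(\Gamma) + \ell(\mathcal{O}_{\Gamma,P}/I_P)$ — this requires $I_P \subset \mathcal{O}_{\Gamma,P}$, which is automatic since $I_P$ is by definition an ideal of $\mathcal{O}_{\Gamma,P}$. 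Combining, at every relevant $P$,
\[
\ell(\mathcal{O}_{\Gamma,P}/I_P) + \delta_P(\Gamma) - \ell(\overline{\mathcal{O}_{\Gamma,P}}/I_P\overline{\mathcal{O}_{\Gamma,P}}) = \ell(I_P\overline{\mathcal{O}_{\Gamma,P}}/I_P) \geq 0.
\]
Summing over $P$ and using the three global identities from the previous paragraph yields $\deg I + \delta(\Gamma) - \deg\Delta(I) = \sum_P \ell(I_P\overline{\mathcal{O}_{\Gamma,P}}/I_P) \geq 0$, giving the inequality, with equality iff every local term vanishes, i.e. iff $I_P = I_P\overline{\mathcal{O}_{\Gamma,P}}$ for all $P$ — which by Theorem \ref{thm locally extended} is exactly the condition that $I$ be an adjoint ideal.

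The main obstacle I anticipate is bookkeeping the index sets correctly: $\deg I$ and $\deg\Delta(I)$ are naturally sums over the support of $\operatorname{Proj}(S/I)$, whereas $\delta(\Gamma)$ is a sum over $\operatorname{Sing}(\Gamma)$, and a priori neither set contains the other. The clean fix is to take the sum over the finite set $T = \operatorname{Sing}(\Gamma) \cup \operatorname{Supp}\operatorname{Proj}(S/I)$ throughout, observing that a point contributes $0$ to $\deg I$ and $\deg\Delta(I)$ if it is not in the support (there $I_P = \mathcal{O}_{\Gamma,P}$, so also $I_P\overline{\mathcal{O}_{\Gamma,P}} = \overline{\mathcal{O}_{\Gamma,P}}$), and contributes $0$ to $\delta(\Gamma)$ if it is a smooth point — so the pointwise identity and all three expansions are valid over $T$ with no change. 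A secondary subtlety is justifying $\deg\Delta(I) = \sum_{P} \ell(\overline{\mathcal{O}_{\Gamma,P}}/I_P\overline{\mathcal{O}_{\Gamma,P}})$, i.e. that pulling back $\operatorname{Proj}(S/I)$ under the finite morphism $\pi$ and taking degree over a point $P$ gives exactly the length of $\overline{\mathcal{O}_{\Gamma,P}}/I_P\overline{\mathcal{O}_{\Gamma,P}}$ as an $\mathcal{O}_{\Gamma,P}$-module (equivalently as a $k$-vector space, using that $k = \overline{k}$ or at least passing to residue degrees — but since $k$ is perfect and we are already in the setup where $\delta_P$ is a $k$-dimension, this is consistent); this is the statement that the fibre of $\Delta(I)$ over $P$ has length $\dim_k \overline{\mathcal{O}_{\Gamma,P}}/I_P\overline{\mathcal{O}_{\Gamma,P}}$, which is just the definition of the pulled-back divisor together with finiteness of $\pi$. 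Once these identifications are in place, the proof is the short computation above.
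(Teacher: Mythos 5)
Your proof is correct, but it takes a genuinely different route from the paper's. The paper works globally: it expresses $\deg I$ via the Hilbert polynomial of $S/I$ and the image $I_\Gamma$ of $I$ in $k[\Gamma]$, applies Riemann--Roch to the linear series $|mH-\Delta(I)|$ on $\overline{\Gamma}$ (together with the observation that this series is nonspecial for $m\gg 0$), and brings in $\delta(\Gamma)$ through the genus formula $p(\Gamma)=p_a(\Gamma)-\delta(\Gamma)$; the equality case is read off as surjectivity of $\varrho_m$. You instead decompose all three quantities $\deg I$, $\deg\Delta(I)$, $\delta(\Gamma)$ as sums of local $k$-dimensions over a common finite index set, establish the pointwise identity $\dim_k(\mathcal{O}_{\Gamma,P}/I_P)+\delta_P(\Gamma)-\dim_k(\overline{\mathcal{O}_{\Gamma,P}}/I_P\overline{\mathcal{O}_{\Gamma,P}})=\dim_k(I_P\overline{\mathcal{O}_{\Gamma,P}}/I_P)\geq 0$ from two additive decompositions of $\dim_k(\overline{\mathcal{O}_{\Gamma,P}}/I_P)$, and sum; the equality case then reduces to Theorem \ref{thm locally extended}. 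Your argument is more elementary, avoiding both Riemann--Roch and the asymptotic-$m$ Hilbert-polynomial step, and it makes the nonnegative error term $\sum_P \dim_k(I_P\overline{\mathcal{O}_{\Gamma,P}}/I_P)$ explicit (which is exactly the cokernel dimension from Remark \ref{rmk kernel rho}). The paper's version, by contrast, stays within the cohomological framework set up just before and integrates more tightly with the surrounding material. You handle the minor bookkeeping issues (index set $T=\operatorname{Sing}(\Gamma)\cup\operatorname{Supp}\operatorname{Proj}(S/I)$, $k$-dimension versus module length for nontrivial residue extensions) correctly; the only place you should spell things out a bit more is the identity $\deg\Delta(I)=\sum_P\dim_k(\overline{\mathcal{O}_{\Gamma,P}}/I_P\overline{\mathcal{O}_{\Gamma,P}})$, which follows by grouping the points $Q\in\overline{\Gamma}$ above each $P$ and using that $\overline{\mathcal{O}_{\Gamma,P}}$ is semilocal with localizations the discrete valuation rings $\mathcal{O}_{\overline{\Gamma},Q}$, so the $k$-dimension of the quotient is $\sum_{Q\mapsto P} n_Q[\kappa(Q):k]$.
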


\begin{proof}
Let $\operatorname*{P}_{\Gamma}(t)=(\deg\Gamma)\cdot t-p_{a}(\Gamma)+1$ be the
Hilbert polynomial of $k[\Gamma]$. Denote by $I_{\Gamma}$ the image of $I$ in
$k[\Gamma]$. Then, for $m\gg0$,
\[
\deg I=\dim_{k}(S_{m}/I_{m})=\dim_{k}(k[\Gamma]_{m}/(I_{\Gamma})_{m}%
)=P_{\Gamma}(m)-\dim_{k}(I_{\Gamma})_{m}\text{.}%
\]
Moreover, by Remark \ref{rmk kernel rho}, $h^{0}\left(  \overline{\Gamma
},\mathcal{O}_{\overline{\Gamma}}\left(  mH-\Delta(I)\right)  \right)
=\dim_{k}(I_{\Gamma})_{m}+h^{0}\left(  \Gamma,\mathcal{F}\right)  \geq\dim
_{k}(I_{\Gamma})_{m}$ for $m\gg0$. Hence, by Riemann-Roch, for $m\gg0$, we
have
\begin{align*}
(\deg\Gamma)\cdot m-\deg\Delta(I)  &  =\deg\left\vert mH-\Delta(I)\right\vert
=\dim\left\vert mH-\Delta(I)\right\vert +p(\Gamma)\\
&  \geq\dim_{k}(I_{\Gamma})_{m}-1+p(\Gamma)\\
&  =\operatorname*{P}\nolimits_{\Gamma}(m)-\deg I-1+p(\Gamma)\\
&  =(\deg\Gamma)\cdot m-\delta(\Gamma)-\deg I\text{.}%
\end{align*}
Here, we use that $\left\vert mH-\Delta(I)\right\vert $ is nonspecial for
large $m$ by reason of its degree.
Equality holds iff $\varrho_{m}$ is surjective.
\end{proof}

\begin{remark}
In the case where $\Gamma$ is a plane curve and $I=\mathfrak{G}$ is its
Gorenstein adjoint ideal, Lemma \ref{lem deg double point divisor} shows that
Equation \eqref{equ:degree-delta-plane-case} may be rewritten as
\begin{equation}
\deg\mathfrak{G}=\delta(\Gamma)\text{.} \label{equ:degree-delta-plane-case-ll}%
\end{equation}

\end{remark}

Note that \eqref{equ:degree-delta-plane-case} and
\eqref{equ:degree-delta-plane-case-ll} may not hold in the non-planar case:

\begin{example}
[{\negthinspace\negthinspace{\cite[Example~5.2.5]{DeJong}}}]Let $\Gamma
\subset\mathbb{P}_{\mathbb{C}}^{3}$ be the image of the parametrization
\[
\mathbb{P}_{\mathbb{C}}^{1}\longrightarrow\mathbb{P}_{\mathbb{C}}^{3}\text{,
}(s:t)\mapsto(s^{5}:t^{3}s^{2}:t^{4}s:t^{5}).
\]
Then $\Gamma$ has exactly one singularity at \mbox{$(1:0:0:0)$}. Furthermore,
$p(\Gamma)=0$ and $p_{a}(\Gamma)=2$, hence $\delta(\Gamma)=2$. However,
$\mathfrak{G}=\left\langle X_{1},X_{2},X_{3}\right\rangle \subset
\mathbb{C}[X_{0},\ldots,X_{3}]$, hence $\deg\mathfrak{G}=1$.
\end{example}

\begin{remark}
If $\Gamma\subset\mathbb{P}_{k}^{r}$ is any curve as in Notation
\ref{rem setup curve in PPr}, with affine part $C$ and no singularities at
infinity, then it follows from Proposition
\ref{Cor adjoint ideal from conductor} that
\[
\deg\mathfrak{G}=\dim_{k}\left(  k[C]/\mathcal{C}_{k[C]}\right)
={\textstyle\sum\nolimits_{P\in\operatorname*{Sing}(C)}}\dim_{k}%
(\mathcal{O}_{C,P}/\mathcal{C}_{\mathcal{O}_{C,P}}).
\]
\end{remark}


\begin{lemma}
If $\operatorname*{char}k=0$, then $\dim_{k}(\mathcal{O}_{\Gamma
,P}/\mathcal{C}_{\mathcal{O}_{\Gamma,P}})\leq\delta_{P}(\Gamma)$ for any point
$P\in\Gamma$.
\end{lemma}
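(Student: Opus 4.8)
The plan is as follows. Write $\mathcal O=\mathcal O_{\Gamma,P}$, $\overline{\mathcal O}=\overline{\mathcal O_{\Gamma,P}}$, and $\mathcal C=\mathcal C_{\mathcal O}$ for the conductor; we may assume $P\in\operatorname{Sing}(\Gamma)$, since otherwise both sides vanish. Here $\mathcal O$ is a one-dimensional reduced, hence Cohen--Macaulay, local ring, $\overline{\mathcal O}$ is module-finite over $\mathcal O$, and $\mathcal C$ is an ideal of both $\mathcal O$ and $\overline{\mathcal O}$. Since $\mathcal C\subset\mathcal O\subset\overline{\mathcal O}$ and $\mathcal C$ is the conductor, the square with corners $\mathcal O,\overline{\mathcal O},\mathcal O/\mathcal C,\overline{\mathcal O}/\mathcal C$ is cartesian, so $\mathcal O/\mathcal C\hookrightarrow\overline{\mathcal O}/\mathcal C$ with cokernel $\overline{\mathcal O}/\mathcal O$; hence $\dim_k(\overline{\mathcal O}/\mathcal C)=\dim_k(\mathcal O/\mathcal C)+\delta_P(\Gamma)$, and the assertion is equivalent to the inequality $\dim_k(\overline{\mathcal O}/\mathcal C)\le 2\,\delta_P(\Gamma)$.

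To establish the latter I would use the canonical module. Being reduced and Cohen--Macaulay of dimension one, $\mathcal O$ carries a canonical ideal: a fractional ideal $\mathfrak c\cong\omega_{\mathcal O}$ with $\mathcal O\subseteq\mathfrak c\subseteq\overline{\mathcal O}$. Applying $\Hom_{\mathcal O}(-,\mathfrak c)$ to $0\to\mathcal O\to\overline{\mathcal O}\to\overline{\mathcal O}/\mathcal O\to 0$, one has $\operatorname{Ext}^{1}_{\mathcal O}(\overline{\mathcal O},\mathfrak c)=0$ (as $\overline{\mathcal O}$ is maximal Cohen--Macaulay over $\mathcal O$), $\Hom_{\mathcal O}(\overline{\mathcal O}/\mathcal O,\mathfrak c)=0$ (finite length into a module of depth one), and $\operatorname{Ext}^{1}_{\mathcal O}(\overline{\mathcal O}/\mathcal O,\mathfrak c)\cong\Hom_k(\overline{\mathcal O}/\mathcal O,k)$ of $k$-dimension $\delta_P(\Gamma)$ by local duality; also $\Hom_{\mathcal O}(\mathcal O,\mathfrak c)=\mathfrak c$ and $\Hom_{\mathcal O}(\overline{\mathcal O},\mathfrak c)=(\mathfrak c:_{\operatorname{Q}(\mathcal O)}\overline{\mathcal O})$. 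By duality for the finite morphism $\mathcal O\hookrightarrow\overline{\mathcal O}$ this last fractional ideal is $\omega_{\overline{\mathcal O}}$, and since $\overline{\mathcal O}$ is a product of discrete valuation rings one can arrange (by scaling $\mathfrak c$ within $\overline{\mathcal O}$) that it equals $\mathcal C$. The sequence then becomes $0\to\mathcal C\to\mathfrak c\to\Hom_k(\overline{\mathcal O}/\mathcal O,k)\to 0$, giving $\dim_k(\mathfrak c/\mathcal C)=\delta_P(\Gamma)$. As $\mathcal C\subseteq\mathcal O\subseteq\mathfrak c$, this yields $\dim_k(\mathcal O/\mathcal C)=\delta_P(\Gamma)-\dim_k(\mathfrak c/\mathcal O)\le\delta_P(\Gamma)$, with equality precisely when $\mathfrak c=\mathcal O$, that is, when $\mathcal O_{\Gamma,P}$ is Gorenstein.

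The step I expect to be the main obstacle is the identification $\Hom_{\mathcal O}(\overline{\mathcal O},\mathfrak c)=\mathcal C$, i.e.\ that a suitably normalized canonical ideal has colon ideal against $\overline{\mathcal O}$ equal to the conductor; this encodes the self-duality behind the statement and requires pinning down the scaling of $\mathfrak c$ inside $\overline{\mathcal O}$. For plane --- more generally hypersurface --- singularities $\mathcal O$ is Gorenstein, so $\mathfrak c=\mathcal O$ and the identification is just the definition of $\mathcal C$; accordingly, since $\operatorname{char}k=0$, one may instead argue by taking a generic linear projection of $\Gamma$ onto a plane curve $\Gamma^{\flat}$, which is birational onto its image in this characteristic, and transferring the equality $\dim_k(\mathcal O_{\Gamma^{\flat},P^{\flat}}/\mathcal C)=\delta_{P^{\flat}}(\Gamma^{\flat})$ at the image point $P^{\flat}$ supplied by Gorenstein's theorem (Theorem \ref{thm canonial linear series}) along the finite birational maps that occur, keeping careful track of lengths. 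In any case the inequality, together with the Gorenstein criterion for equality, is classical; cf.\ \cite{GP}, \cite{DeJong}.
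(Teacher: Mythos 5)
Your canonical-module argument is correct and is genuinely different from the paper's proof. The paper simply observes that normalization commutes with base change and then cites \cite[2.4]{Greuel1982} (which is the case $k=\mathbb{C}$); you instead give a self-contained algebraic proof via duality. One remark worth making: the step you flag as the main obstacle --- identifying $\Hom_{\mathcal{O}}(\overline{\mathcal{O}},\mathfrak{c})=(\mathfrak{c}:\overline{\mathcal{O}})$ with $\mathcal{C}$ --- is in fact not an obstacle and needs no rescaling. Once a canonical ideal $\mathfrak{c}$ has been chosen with $\mathcal{O}\subseteq\mathfrak{c}\subseteq\overline{\mathcal{O}}$ (which is standard; cf.\ Herzog--Kunz), both inclusions are elementary: $(\mathfrak{c}:\overline{\mathcal{O}})$ is an $\overline{\mathcal{O}}$-submodule of $\mathfrak{c}^{\vee}=\mathcal{O}$ (dualize the sandwich), hence is an $\overline{\mathcal{O}}$-ideal inside $\mathcal{O}$, hence is contained in $\mathcal{C}$; conversely $\mathcal{C}\overline{\mathcal{O}}=\mathcal{C}\subseteq\mathcal{O}\subseteq\mathfrak{c}$ gives $\mathcal{C}\subseteq(\mathfrak{c}:\overline{\mathcal{O}})$. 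With that in hand, your computation $\ell(\mathfrak{c}/\mathcal{C})=\delta_P$ and the chain $\mathcal{C}\subseteq\mathcal{O}\subseteq\mathfrak{c}$ give the inequality directly, with equality exactly in the Gorenstein case. Note also that your duality argument uses nothing about the characteristic, so it proves the statement without the hypothesis $\operatorname{char}k=0$; the paper imposes that hypothesis only because the cited reference works over $\mathbb{C}$. Your alternative route via a generic projection to a plane curve is much more delicate than you indicate --- the projection can create new singular points, and one would have to control both the extra branches meeting the image point and the change in conductor along the resulting finite birational extension --- so it is best left as a remark; the Ext computation is the clean argument.
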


\begin{proof}
Since normalization commutes with base change, this follows from the case $k=\mathbb C$
proved in \cite[2.4]{Greuel1982}.
\end{proof}
Now recall that a point $P\in\operatorname{Sing}(\Gamma)$ is called a
\emph{Gorenstein singularity} if
\[
\dim_{k}(\mathcal{O}_{\Gamma,P}/\mathcal{C}_{\mathcal{O}_{\Gamma,P}}%
)=\delta_{P}(\Gamma)\text{.}%
\]

\begin{example}
Plane curve singularities are Gorenstein (see, for example,
\cite[Corollary~5.2.9]{DeJong}).
\end{example}

\begin{corollary}
\label{cor delta gorenstein adjoint ideal}
We have:

\begin{enumerate}
\item If $\operatorname*{char}k=0$, then $\deg\mathfrak{G}\leq\delta(\Gamma)$.

\item If $\Gamma$ has only Gorenstein singularities, then
\[
\deg\mathfrak{G}=\delta(\Gamma)\;\text{ and }\;\deg\Delta(\mathfrak{G}%
)=2\delta(\Gamma)\text{.}%
\]

\end{enumerate}
\end{corollary}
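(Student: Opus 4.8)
The plan is to express $\deg\mathfrak{G}$ as a sum of local conductor lengths and then read off both parts from the results established just above.

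First I would observe that, by Theorem \ref{thm adjoint ideal locally conductor}, the scheme $\operatorname*{Proj}(S/\mathfrak{G})$ is zero-dimensional (being supported on the finite set $\operatorname*{Sing}(\Gamma)$) and has stalk $\mathcal{O}_{\Gamma,P}/\mathfrak{G}_{P}=\mathcal{O}_{\Gamma,P}/\mathcal{C}_{\mathcal{O}_{\Gamma,P}}$ at each $P\in\operatorname*{Sing}(\Gamma)$. Hence, by the definition of $\deg$ recalled above,
\[
\deg\mathfrak{G}=\sum_{P\in\operatorname*{Sing}(\Gamma)}\dim_{k}\big(\mathcal{O}_{\Gamma,P}/\mathcal{C}_{\mathcal{O}_{\Gamma,P}}\big);
\]
this is also the content of the Remark preceding the corollary in the case where $\Gamma$ has no singularities at infinity, a situation to which one could equally well reduce by a projective automorphism, since none of $\deg\mathfrak{G}$, $\delta(\Gamma)$, $\deg\Delta(\mathfrak{G})$, nor the Gorenstein property is affected.

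For part (1), I would apply the bound $\dim_{k}(\mathcal{O}_{\Gamma,P}/\mathcal{C}_{\mathcal{O}_{\Gamma,P}})\le\delta_{P}(\Gamma)$, valid in characteristic zero by the lemma just above, to each summand and sum over $\operatorname*{Sing}(\Gamma)$, obtaining $\deg\mathfrak{G}\le\delta(\Gamma)$. For the first equality in part (2), the defining property of a Gorenstein singularity turns each of these inequalities into an equality, so the displayed formula gives $\deg\mathfrak{G}=\delta(\Gamma)$.

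Finally, for the equality $\deg\Delta(\mathfrak{G})=2\delta(\Gamma)$, I would invoke Lemma \ref{lem deg double point divisor} with $I=\mathfrak{G}$: when $\Gamma$ is singular, $I(\Gamma)\subsetneqq\mathfrak{G}$ because $\mathfrak{G}_{P}=\mathcal{C}_{\mathcal{O}_{\Gamma,P}}$ contains a nonzerodivisor for every singular $P$ (Lemma \ref{lemma-role-of-cond}), so the lemma yields $\deg\Delta(\mathfrak{G})=\deg\mathfrak{G}+\delta(\Gamma)=2\delta(\Gamma)$; when $\Gamma$ is smooth every term is $0$. There is no substantial obstacle here: the corollary is essentially bookkeeping on top of the preceding lemmas, and the only points needing a word of care are the applicability of Lemma \ref{lem deg double point divisor} (i.e.\ the hypothesis $I(\Gamma)\subsetneqq\mathfrak{G}$) and the trivial smooth case.
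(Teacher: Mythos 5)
Your proof is correct and follows exactly the route the paper intends, which gives no explicit argument beyond ``This is clear from the discussion above''; you simply spell out the three ingredients (the local decomposition $\deg\mathfrak{G}=\sum_{P}\dim_{k}(\mathcal{O}_{\Gamma,P}/\mathcal{C}_{\mathcal{O}_{\Gamma,P}})$ from the preceding Remark, the characteristic-zero bound and the Gorenstein equality from the Lemma and Definition just above, and Lemma \ref{lem deg double point divisor} applied to $I=\mathfrak{G}$). One small shortcut you could take: the hypothesis $I(\Gamma)\subsetneqq\mathfrak{G}$ needed for Lemma \ref{lem deg double point divisor} is already part of the conclusion of Theorem \ref{thm adjoint ideal locally conductor}, which states that $\mathfrak{G}$ \emph{is} an adjoint ideal (and adjoint ideals by definition properly contain $I(\Gamma)$), so the detour through Lemma \ref{lemma-role-of-cond} is not strictly necessary, though it is not wrong.
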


\begin{proof}
This is clear from the discussion above.

\end{proof}

In the case of arbitrary singularities, we will make use of the equality
\[
\deg\mathfrak{G}=\deg\Delta(\mathfrak{G})-\delta(\Gamma)
\]
to compute $\deg\mathfrak{G}$ without actually knowing $\mathfrak{G}$, and
apply this in the final verification step of our modularized adjoint ideal
algorithm. To this end, if $k=\mathbb{C}$ and $\Gamma$ is defined over the
rationals, we will present a modular approach to computing $\deg
\Delta(\mathfrak{G})$, and we will use standard techniques to compute
$\delta(\Gamma)$. In fact, for the latter, first note that the delta invariant
of $\Gamma$ differs from that of a plane model of $\Gamma$ by the quantity
$p_{a}(\Gamma)-\binom{\deg\Gamma-1}{2}$. The delta invariant of a plane curve,
in turn, can be computed locally at the singular points, either from the
semigroups of values of the analytic branches of the singularity (see
\cite{DeJong}, \cite{GLSsing}), or from a formula relating the local delta
invariant to the Milnor number (see Remark \ref{rem:comp-inv-plane-curves} in
Section \ref{sec summary} below).

\begin{remark}
\label{lem deg delta} Note that computing $\deg\Delta(\mathfrak{G})$ also
means to compute the dimension $\dim_{k}\big(  \overline{k[C]}/\mathcal{C}%
_{k[C]}\big)  $: Given $\Gamma\subset\mathbb{P}_{k}^{r}$ as in Notation
\ref{rem setup curve in PPr}, with affine part $C$ and no singularities at
infinity, we have
\begin{align*}
\deg\Delta(\mathfrak{G})  &  =\delta(\Gamma)+\deg\mathfrak{G}\\
&  =\dim_{k}\big(  \overline{k[C]}/k[C]\big)  +\dim_{k}\big(
k[C]/\mathcal{C}_{k[C]}\big) \\
&  =\dim_{k}\big(  \overline{k[C]}/\mathcal{C}_{k[C]}\big)  .
\end{align*}

\end{remark}


We are now ready to address the computation of the Gorenstein adjoint ideal.
Using Proposition \ref{Cor adjoint ideal from conductor}, one way of finding
$\mathfrak{G}$ is to apply the global algorithm presented in Section
\ref{sec ideal quotient} below, starting from the normalization $\overline
{k[C]}$. The normalization, in turn, can be found by combining the minimal
local contributions to $\overline{k[C]}$ at the singular points via
Proposition \ref{prop:local-to-global-I}. As it turns out, however, it is more
efficient to directly compute local Gorenstein adjoint ideals at the singular
points, and get $\mathfrak{G}$ as their intersection. This will be discussed
in Sections \ref{Sec Global from local} and \ref{Sec General local approch}.

\begin{remark} In applications, $\Gamma\subset\mathbb{P}_{k}^{r}$ is often 
defined over a perfect subfield $k'\subset k$ (for example, $k'=\QQ$ and 
$k=\mathbb C$). In such a situation, by base change, 
$\delta(\Gamma)=\delta(\Gamma(k'))$. Moreover, since the 
algorithms in Sections \ref{Sec Global from local} and \ref{Sec General local approch}
rely on Gr\"obner bases, and Buchberger's algorithm for computing Gr\"obner 
bases does not leave the ground field, $\mathfrak{G}(\Gamma)=\mathfrak{G}
(\Gamma(k'))K[X_0,\ldots,X_n]$, and generators can be found by computations over $k'$. 
\end{remark}

\section{Global approaches\label{Sec Global approach}}

\subsection{Computing the conductor via the trace
matrix\label{sec linear algebra}}

We will require some facts from classical ideal theory (see \cite[Ch.
V]{ZariskiSamuel} for details and proofs): Let $R$ be an integral domain, and
let $K=\operatorname*{Q}\left(  R\right)  $ be its quotient field. A
\emph{fractionary ideal} of $R$ is an $R$-submodule $\mathfrak{b}$ of $K$
admitting a common denominator: there is an element $0\neq d\in R$ such that
$d\ \! \mathfrak{b}\subset R$.

\begin{example}
The extensions $A_{i}$ computed by the normalization algorithms from Section
\ref{sec normalization} are fractionary ideals of the given affine domain $A$.
\end{example}

If $\mathfrak{b}, \mathfrak{b^{\prime}}$ are two fractionary ideals of $R$,
with $\mathfrak{b}^{\prime}$ nonzero, then $\mathfrak{b} :\mathfrak{b^{\prime
}}=\{ z\in K\mid z\;\!\mathfrak{b^{\prime}}\subset\mathfrak{b}\}$ is a
fractionary ideal of $R$ as well. A fractionary ideal $\mathfrak{b}$ of $R$ is
\emph{invertible} if there is a fractionary ideal $\mathfrak{b^{\prime}}$ of
$R$ such that $\mathfrak{b}\cdot\mathfrak{b^{\prime}}=R$. In this case,
$\mathfrak{b^{\prime}}$ is uniquely determined and equal to $R:\mathfrak{b}$.

Suppose in addition that $R$ is normal. Let $K^{\prime}$ be a finite separable
extension of $K$, and let $R^{\prime}$ be an integral extension of $R$ such
that $K^{\prime}=\operatorname*{Q}\left(  R^{\prime}\right)  $. Moreover, let
\[
\operatorname{Tr}_{K^{\prime}/K}:K^{\prime}\rightarrow K\text{, }%
z\mapsto{\displaystyle\sum\limits_{g\in\operatorname*{Gal}\left(  K^{\prime
}/K\right)  }}g(z)\text{,}%
\]
be the \emph{trace map}. Then the \emph{complementary module}
\[
\mathfrak{C}_{R^{\prime}/R}:=\left\{  z\in K^{\prime}\mid\operatorname{Tr}%
_{K^{\prime}/K}\left(  zR^{\prime}\right)  \subset R\right\}
\]
of $R^{\prime}$ with respect to $R$ is a \emph{fractionary ideal} of
$R^{\prime}$ containing $R^{\prime}$. Hence, the \emph{different}\textbf{ }%
\begin{align*}
\mathfrak{D}_{R^{\prime}/R}  &  =R^{\prime}:\mathfrak{C}_{R^{\prime}%
/R}=\left\{  z\in K^{\prime}\mid z\;\!\mathfrak{C}_{R^{\prime}/R}\subset
R^{\prime}\right\} \\
&  =\left\{  z\in K^{\prime}\mid zx\in R^{\prime}\text{ for all }x\in
K^{\prime}\text{ with }\operatorname{Tr}_{K^{\prime}/K}\left(  xR^{\prime
}\right)  \subset R\right\}
\end{align*}
of $R^{\prime}$ over $R$ is a nonzero ideal of $R^{\prime}$.

Now, keeping our assumptions, we focus on the case where $R$ is a Dedekind
domain, and where $R^{\prime}$ is the integral closure of $R$ in $K^{\prime}$.
Then $R^{\prime}$ is a Dedekind domain as well, which implies that every
nonzero fractionary ideal of $R^{\prime}$ is invertible. On the other hand, by
the primitive element theorem, there is an element $y\in R^{\prime}$ with
$K^{\prime}=K(y)$. Denote by $f(Y)\in K[Y]$ the minimal polynomial of $y$ over
$K$. Then, as shown in \cite[Ch. V]{ZariskiSamuel},
\[
f^{\prime}(y)R^{\prime}=\mathcal{C}_{R^{\prime}/R[y]}\mathfrak{D}_{R^{\prime
}/R}\text{,}%
\]
hence%
\begin{equation}
\mathcal{C}_{R^{\prime}/R[y]}=f^{\prime}(y)\mathfrak{C}_{R^{\prime}/R}%
\text{{}}. \label{equ conductor}%
\end{equation}

We now fix the following setup:

\begin{notation}
\label{rmk setup}
Let $k$ be a perfect field. Let $\Gamma\subset\mathbb{P}_{k}^{2}$ 
be a plane curve of degree $n$ defined
by an irreducible polynomial $F\in k[X,Y,Z]$. Suppose that $\Gamma$ has no
singularities at infinity with respect to the affine chart
\[
\mathbb{A}_{k}^{2}\hookrightarrow\mathbb{P}_{k}^{2}\text{, }\left(
X,Y\right)  \mapsto\left(  1:X:Y\right)  ,
\]
and that the equation $f\in$ $k[X,Y]$ of the affine part $C$ of $\Gamma$ is
monic in $Y$.
\end{notation}

Write $k[C]=k[x,y]=k[X,Y]/\langle f(X,Y)\rangle$ for the affine coordinate
ring of $C$ and
\[
k(C)=k(x,y)=k(X)[Y]/\langle f(X,Y)\rangle
\]
for its function field. Then $x$ is a separating transcendence basis of $k(C)$
over $k$, and $y$ is integral over $k[x]$, with integral equation $f(x,y)=0$.
In particular, $k[C]$ is integral over $k[x]$, which implies that
$\overline{k[C]}$ coincides with the integral closure $\overline{k[x]}$ of
$k[x]$ in $k(C)$. Furthermore, $\overline{k[C]}$ is a free $k[x]$-module of
rank
\[
n:=\deg_{y}(f)=[k(C):k(x)]\text{.}%
\]

\begin{definition}
An {\emph{integral basis}} for $\overline{k[C]}$ is a set $b_{0},\dots
,b_{n-1}$ of free generators for $\overline{k[C]}$ over $k[x]$:
\[
\overline{k[C]}=k[x]b_{0}\oplus\cdots\oplus k[x]b_{n-1}\text{.}%
\]

\end{definition}

\begin{remark}
\label{rem:spec-int-basis}Since $k(C)=k(x,y)=k(X)[Y]/\!\left\langle
f\right\rangle $, any element $\alpha\in k(C)$ can be represented as a
polynomial in $k(X)[Y]$ of degree less than $n=\deg f$. Hence, one can
associate to $\alpha$ a well-defined degree $\deg_{y}\left(  \alpha\right)  $
in $y$ and a smallest common denominator in $k[x]$ of the coefficients of
$\alpha$. In particular, $\overline{k[C]}$ has an integral basis $(b_{i})$ in
triangular form, that is, with $\deg_{y}(b_{i})=i$, for $i=0,...,n-1$. If not
stated otherwise, all integral bases will be of this form. In principle, such
a basis can be found by applying one of the normalization algorithms discussed
earlier. However, in the characteristic zero case, methods relying on Puiseux
series techniques are much more efficient (see \cite{BDLS} and
\cite{vanHoeijIntegralBasis}).
\end{remark}

\begin{example}
\label{ex a4 E8-ib} An integral basis for the curve from Example
\ref{ex a4 E8} is given below:%
\[
1,y,\frac{y(y-1)}{x},\frac{y(y-1)^{2}}{x^{2}},\frac{y^{2}(y-1)^{2}}{x^{3}%
}\text{.}%
\]

\end{example}

Using Proposition \ref{Cor adjoint ideal from conductor} and Equation
\eqref{equ conductor}, with $R=k[x]$, $R^{\prime}=\overline{k[C]}$, $K=k(x)$,
and $K^{\prime}=k(C)$, we get Algorithm \ref{alg conductor linear algebra}.

\begin{algorithm}[h]
\caption{Gorenstein adjoint ideal via linear algebra (see \cite{Mnuk})}
\label{alg conductor linear algebra}
\begin{algorithmic}[1]
\REQUIRE A plane curve $\Gamma$ with affine part $C$ as in Notation \ref{rmk setup}.
\ENSURE The Gorenstein adjoint ideal $\mathfrak{G}$ of $\Gamma$.
\STATE Compute an integral basis $\left(  b_{i}\right)  _{i=0,...,n-1}$ for
$\overline{k[  C]  }$.
\STATE Compute the (symmetric and invertible) trace matrix%
\[
T=\left(  \operatorname{Tr}_{k\left(  C\right)  /k\left(  x\right)  }\left(
b_{i}b_{j}\right)  \right)  _{i,j=0,...,n-1}\in k(  x)  ^{n\times
n}.
\]
\STATE Compute a decomposition $L\cdot R=P\cdot T$, where $L$ is left
triangular matrix with diagonal entries equal to one, $R$ is a right
triangular matrix, and $P$ is a permutation matrix.
\STATE For $j=0,...,n-1$, use forward and backward substitution to compute
\[
\eta_{j}=\sum\limits_{i=0}^{n-1}s_{ij}b_{i},
\]
where $\left(  s_{ij}\right)  =T^{-1}$. The $\eta_{j}$ are $k[x]$-module
generators for $\mathfrak{C}_{\overline{k[  C]}/k[  x]  }$.
By \eqref{equ conductor}, $\mathcal{C}_{k\left[  C\right] }= \langle\frac{\partial
f}{\partial Y}\left(  x,y\right)  \eta_{j} \mid j=0,...,n-1\rangle$.
\STATE Let $\mathcal{C}$ be the ideal of $k[X,Y]$ generated by representatives
of minimal $y$-degree of the $\frac{\partial f}{\partial Y}\left(  x,y\right)
\eta_{j}$, $j=0,...,n-1$.
\RETURN the homogenization of $\mathcal{C}$ with respect to $X_{0}$.
\end{algorithmic}
\end{algorithm}

\begin{remark}
\label{rmk subfield}
To compute an integral basis via Puiseux series in the characteristic zero
case, we temporarily may have to pass to an algebraic extension field of $k$.
\end{remark}

\begin{example}
\label{ex A4} The curve $\Gamma\subset\mathbb{P}_{\mathbb{C}}^{2}$ from
Example \ref{ex a4 E8} with affine equation
\[
X^{5}-Y^{2}\left(  1-Y\right)  ^{3}=0
\]
has a singularity of type $A_{4}$ at $\left(  0,0\right)  $ and a $3$-fold
point of type $E_{8}$ at $\left(  0,1\right)  $. From the integral basis
\[%
\begin{tabular}
[c]{lllll}%
$b_{0}=1$, & $b_{1}=y$, & $b_{2}=\frac{y\left(  y-1\right)  }{x}$, &
$b_{3}=\frac{y\left(  y-1\right)  ^{2}}{x^{2}}$, \ and & \negthinspace
\negthinspace\negthinspace\negthinspace\ $b_{4}=\frac{y^{2}\left(  y-1\right)
^{2}}{x^{3}}$%
\end{tabular}
\]
given in Example \ref{ex a4 E8-ib}, we compute the trace matrix%
\[
T=\left(
\begin{array}
[c]{ccccc}%
5 & 3 & 0 & 0 & 0\\
3 & 3 & 0 & 0 & -5x^{2}\\
0 & 0 & 0 & -5x^{2} & -3x\\
0 & 0 & -5x^{2} & -3x & 0\\
0 & -5x^{2} & -3x & 0 & 0
\end{array}
\right)  ,
\]
which yields by forward and backward substitution%
\[
\mathcal{C}_{\mathbb{C}[C]}=\left\langle x^{3},\;x^{2}\left(  y-1\right)
,\;xy\left(  x-1\right)  ,\;y\left(  y-1\right)  ^{2}\right\rangle
_{\mathbb{C}[C]}\text{.}%
\]
Homogenization (and primary decomposition) gives%
\[
\mathfrak{G}=\left\langle Y,X^{2}\right\rangle \cap\left\langle X^{3},X\left(
Y-Z\right)  ,\left(  Y-Z\right)  ^{2}\right\rangle \text{.}%
\]
Writing $\mathfrak{G}$ as the intersection of simpler ideals corresponding to
the singularities motivates the local to global approach discussed in Sections
\ref{Sec Global from local} and \ref{Sec General local approch} below, where
$\mathfrak{G}$ will be found as the intersection of local Gorenstein ideals.
\end{example}

\pagebreak[4]

\subsection{Computing the adjoint ideal via ideal
quotients\label{sec ideal quotient}}

The algorithm presented in what follows relies on normalization and ideal
quotients. It is not limited to plane curves.

\begin{proposition}
\label{prop adjoint ideal and conductor} Let $\Gamma\subset\mathbb{P}_{k}^{r}$
be a curve with affine part $C$ as in Notation \ref{rem setup curve in PPr}.
Write $\overline{k[C]}=\frac{1}{d}U$, where $U\subset k[C]$ is an ideal and
$d\in U$ is nonzero. Then the conductor is%
\[
\mathcal{C}_{k[C]}=\left\langle d\right\rangle _{k[C]} :U\text{.}%
\]

\end{proposition}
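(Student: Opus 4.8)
The plan is to use the definition of the conductor $\mathcal{C}_{k[C]} = \{a \in k[C] \mid a\overline{k[C]} \subset k[C]\}$ together with the given representation $\overline{k[C]} = \frac{1}{d}U$. First I would note that since $d \in U$, we have $k[C] = \frac{1}{d}\langle d\rangle_{k[C]} \subset \frac{1}{d}U = \overline{k[C]}$, so the inclusion $k[C] \subset \overline{k[C]}$ is realized in the stated form. The key observation is that for $a \in k[C]$, the condition $a\overline{k[C]} \subset k[C]$ is equivalent to $a \cdot \frac{1}{d}U \subset k[C]$, i.e. $aU \subset \langle d\rangle_{k[C]}$, which is precisely the statement $a \in \langle d\rangle_{k[C]} : U$.

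So the proof would proceed by a short chain of equivalences:
\begin{align*}
a \in \mathcal{C}_{k[C]} &\iff a\,\overline{k[C]} \subset k[C] \\
&\iff a \cdot \tfrac{1}{d}U \subset k[C] \\
&\iff aU \subset \langle d\rangle_{k[C]} \\
&\iff a \in \langle d\rangle_{k[C]} : U.
\end{align*}
Each step is essentially a restatement: the first is the definition of the conductor (in the form of the Notation for $\mathcal{C}_A$ given earlier), the second substitutes the representation $\overline{k[C]} = \frac{1}{d}U$, the third multiplies through by $d$ and uses that $d$ is a nonzerodivisor in the integral domain $k[C]$ (so multiplication by $d$ is injective and $\frac{1}{d}U \subset k[C] \iff U \subset dk[C]$), and the fourth is the definition of the ideal quotient.

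There is essentially no hard part here — the statement is a direct computation once the representation $\overline{k[C]} = \frac{1}{d}U$ is in hand. The only point requiring a word of care is the equivalence $a \cdot \frac{1}{d}U \subset k[C] \iff aU \subset \langle d\rangle_{k[C]}$, which uses that $k[C]$ is an integral domain (guaranteed since $\Gamma$, hence $C$, is integral) so that clearing the denominator $d$ is legitimate; one should also remark that $\overline{k[C]}$ being normalization-finite (Emmy Noether's finiteness theorem) is what guarantees such a representation $\frac{1}{d}U$ exists in the first place, though that is assumed in the hypothesis. I would also note in passing that $\langle d\rangle_{k[C]} : U$ is indeed an ideal of $k[C]$ (and equals $\mathcal{C}_{k[C]}$, so in particular contains the nonzerodivisor $d$, consistent with Lemma \ref{lemma-role-of-cond}).
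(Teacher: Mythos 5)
Your proposal is correct and follows essentially the same route as the paper: the paper also unwinds the definition $\mathcal{C}_{k[C]}=\{s\in k[C]\mid s\cdot\overline{k[C]}\subset k[C]\}$, substitutes $\overline{k[C]}=\frac{1}{d}U$ to get $s\cdot g\in\langle d\rangle$ for all $g\in U$, and identifies this with $\langle d\rangle_{k[C]}:U$. Your added remarks about clearing the denominator being legitimate in an integral domain are just spelling out what the paper leaves implicit.
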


\begin{proof}
By definition,%
\begin{align*}
\mathcal{C}_{k[C]}  &  =\left\{  s\in k[C]\mid s\cdot\overline{k[C]}\subset
k[C]\right\} \\
&  =\left\{  s\in k[C]\mid s\cdot g\in\left\langle d\right\rangle \text{ for
all }g\in U\right\} \\
&  =\left\langle d\right\rangle _{k[C]} :U.
\end{align*}

\vspace{-0.5cm}
\end{proof}

Using once more Proposition \ref{Cor adjoint ideal from conductor}, we get
Algorithm \ref{Alg ideal quotient gorenstein adjoint ideal}. 

\begin{algorithm}[H]
\caption{Gorenstein adjoint ideal via ideal quotients}
\label{Alg ideal quotient gorenstein adjoint ideal}
\begin{algorithmic}[1]
\REQUIRE A curve $\Gamma\subset\mathbb{P}_{k}^{r}$ with affine part $C$ as in Notation
\ref{rem setup curve in PPr} and no singularities at infinity.
\ENSURE The Gorenstein adjoint ideal $\mathfrak{G}$ of $\Gamma$.
\STATE Normalization: Compute polynomials $d, a_{0},\dots, a_{s}\in k[
X_{1},...,X_{r}]  $ such that the fractions $\frac{a_{i}(
x_{1},...,x_{r})  }{d(  x_{1},...,x_{r})  }$ generate
$\overline{k  [C]  }$ as a $k[  C]  $-module.
\STATE Compute the ideal quotient%
\[
\mathcal{C}=(\left\langle d\right\rangle +I(  C)  ):(\left\langle
a_{0},...,a_{s}\right\rangle +I(  C)  )\subset k[
X_{1},...,X_{r}]  .
\]
\vspace{-0.4cm}
\RETURN the homogenization of $\mathcal{C}$ with respect to $X_{0}$.
\end{algorithmic}
\end{algorithm}


\begin{example}
In Example \ref{ex a4 E8},
\[%
\begin{tabular}
[c]{llll}%
$a_{0}=X^{3}$, & $a_{1}=X^{2}Y(Y-1)$, & $a_{2}=XY(Y-1)^{2}$, & $a_{3}%
=Y^{2}(Y-1)^{2}$,
\end{tabular}
\
\]
and $d=X^{3}$. Hence,
\[
\left\langle d,f\right\rangle :\left\langle a_{0},...,a_{3},f\right\rangle
=\left\langle X^{3},\;X^{2}\left(  Y-1\right)  ,\;XY\left(  Y-1\right)
,\;Y\left(  Y-1\right)  ^{2}\right\rangle .
\]

\end{example}

\section{A Local to global Approach\label{Sec Global from local}}

In this section, motivated by the local to global approach for normalization,
we introduce local Gorenstein adjoint ideals of a given curve and show how to
find the Gorenstein adjoint ideal $\mathfrak{G}$ as their intersection.
Together with the algorithm presented in the next section, where we will show
how to compute the local ideals, this yields a local to global approach for
finding $\mathfrak{G}$. As we will see in Section
\ref{Sec examples and comparisons}, this approach is per se faster than the
algorithms discussed so far. In addition, it is well-suited for parallel
computations. 

We fix the following setup:

\begin{notation}
\label{rem setup curve in PPr 2} Let $\Gamma\subset\mathbb{P}_{k}^{r}$ be an
integral non-degenerate projective curve, and let $S$ be the homogeneous coordinate
ring of $\mathbb{P}_{k}^{r}$.
\end{notation}

\begin{definition}
Let $W\subset\operatorname*{Sing}(\Gamma)$ be a set of singular points of
$\Gamma$. The \emph{local Gorenstein adjoint ideal} of $\Gamma$ at $W$ is
defined to be the largest homogeneous ideal $\mathfrak{G}(W) \subset S$ which
satisfies%
\begin{equation}
\mathfrak{G}(W)_{P}=\mathcal{C}_{\mathcal{O}_{\Gamma,P}}\;\text{ for all
}\;P\in W. \label{eq:loc-gor-ideal}%
\end{equation}
For a single point $P\in\operatorname*{Sing}(\Gamma)$, we write $\mathfrak{G}%
(P):=\mathfrak{G}(\{P\})$.
\end{definition}

\begin{remark}
\label{rem:local-adjoint-ideal}Since $\mathfrak{G}(W)$ is the largest
homogeneous ideal satisfying \eqref{eq:loc-gor-ideal}, it is saturated and
$\operatorname*{Proj}(S/\mathfrak{G}(W))$ is supported on $W$.

\end{remark}


\begin{proposition}
\label{prop global from local} Let $W\subset\operatorname*{Sing}(\Gamma)$.
Then
\[
\mathfrak{G}(W)={\textstyle\bigcap\nolimits_{P\in W}}\mathfrak{G}(P)\text{.}%
\]

\end{proposition}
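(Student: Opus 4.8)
The plan is to prove the two inclusions separately, using the characterization of $\mathfrak{G}(W)$ as the \emph{largest} homogeneous ideal satisfying the local conditions \eqref{eq:loc-gor-ideal}, together with the fact (Remark \ref{rem:local-adjoint-ideal}) that these ideals are saturated with support contained in the relevant point sets.

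\medskip

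\textbf{Step 1: $\mathfrak{G}(W) \subset \bigcap_{P \in W} \mathfrak{G}(P)$.} Fix $P \in W$. Since $\mathfrak{G}(W)_Q = \mathcal{C}_{\mathcal{O}_{\Gamma,Q}}$ for all $Q \in W$, in particular $\mathfrak{G}(W)_P = \mathcal{C}_{\mathcal{O}_{\Gamma,P}}$, so $\mathfrak{G}(W)$ is one of the homogeneous ideals satisfying the defining condition for $\mathfrak{G}(\{P\})$. By maximality of $\mathfrak{G}(P)$, we get $\mathfrak{G}(W) \subset \mathfrak{G}(P)$. As this holds for every $P \in W$, we conclude $\mathfrak{G}(W) \subset \bigcap_{P \in W} \mathfrak{G}(P)$.

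\medskip

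\textbf{Step 2: $\bigcap_{P \in W} \mathfrak{G}(P) \subset \mathfrak{G}(W)$.} Set $J := \bigcap_{P \in W} \mathfrak{G}(P)$, which is a homogeneous ideal of $S$. To invoke the maximality of $\mathfrak{G}(W)$, it suffices to check that $J_P = \mathcal{C}_{\mathcal{O}_{\Gamma,P}}$ for each $P \in W$. Localization commutes with finite intersections of ideals, so $J_P = \bigcap_{Q \in W} \mathfrak{G}(Q)_P$. For the term $Q = P$ we have $\mathfrak{G}(P)_P = \mathcal{C}_{\mathcal{O}_{\Gamma,P}}$. For $Q \neq P$, by Remark \ref{rem:local-adjoint-ideal} the scheme $\operatorname{Proj}(S/\mathfrak{G}(Q))$ is supported on $\{Q\}$, hence $P \notin V(\mathfrak{G}(Q))$, which forces $\mathfrak{G}(Q)_P = \mathcal{O}_{\Gamma,P}$ (the full local ring). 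Therefore $J_P = \mathcal{C}_{\mathcal{O}_{\Gamma,P}} \cap \bigcap_{Q \neq P}\mathcal{O}_{\Gamma,P} = \mathcal{C}_{\mathcal{O}_{\Gamma,P}}$, as desired. By the maximality defining $\mathfrak{G}(W)$, we obtain $J \subset \mathfrak{G}(W)$.

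\medskip

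Combining the two inclusions gives $\mathfrak{G}(W) = \bigcap_{P \in W} \mathfrak{G}(P)$. The only point requiring a little care is Step 2: one must be sure that ``support on $\{Q\}$'' really yields $\mathfrak{G}(Q)_P = \mathcal{O}_{\Gamma,P}$ and not merely some ideal with trivial vanishing locus; this follows because $\mathfrak{G}(Q)$, being saturated with $\operatorname{Proj}(S/\mathfrak{G}(Q))$ supported away from $P$, localizes to the unit ideal at $P$ — equivalently, there is a homogeneous element of $\mathfrak{G}(Q)$ not vanishing at $P$, which becomes a unit in $\mathcal{O}_{\Gamma,P}$ after dehomogenizing. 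I do not anticipate any serious obstacle; the argument is essentially a formal manipulation with maximal ideals and localization, entirely parallel to the proof of Proposition \ref{prop:local-to-global-I} for normalization.
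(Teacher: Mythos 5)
Your proof is correct and follows essentially the same route as the paper's: the key computation in both is that localizing the intersection $\bigcap_{P\in W}\mathfrak{G}(P)$ at a point $Q\in W$ picks out the conductor $\mathcal{C}_{\mathcal{O}_{\Gamma,Q}}$, because all factors $\mathfrak{G}(P)$ with $P\neq Q$ localize to the unit ideal by the support statement in Remark~\ref{rem:local-adjoint-ideal}. The only cosmetic difference is that you split the conclusion into two inclusions via the maximality property, whereas the paper notes directly that both saturated ideals have the same support and the same localizations at that support and are therefore equal; note also that your invocation of "localization commutes with finite intersections" tacitly uses that $W\subset\operatorname{Sing}(\Gamma)$ is a finite set of closed points, which holds since $\Gamma$ is a curve.
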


\begin{proof}
This is immediate from the definition: If $\mathfrak{G}^{\prime}%
:={\textstyle\bigcap\nolimits_{P\in W}}\mathfrak{G}(P)$, then
$\operatorname*{Proj}(S/\mathfrak{G}^{\prime})$ and $\operatorname*{Proj}%
(S/\mathfrak{G}(W))$ have the same support $W$, and
\[
\mathfrak{G}_{Q}^{\prime}=\mathfrak{G}(Q)_{Q}=\mathcal{C}_{\mathcal{O}%
_{\Gamma,Q}}=\mathfrak{G}(W)_{Q}%
\]
for all $Q\in W$, hence $\mathfrak{G}(W)=\mathfrak{G}^{\prime}$.

\end{proof}

Proposition \ref{prop global from local} yields Algorithm
\ref{alg global from local}.

\begin{algorithm}[H]
\caption{Gorenstein adjoint ideal, local to global}
\label{alg global from local}
\begin{algorithmic}[1]
\REQUIRE A curve $\Gamma\subset\mathbb{P}_{k}^{r}$  as in Notation
\ref{rem setup curve in PPr 2}.
\ENSURE The Gorenstein adjoint ideal $\mathfrak{G}$ of $\Gamma$.
\STATE Compute $\operatorname*{Sing}(\Gamma)=\{ P_{1},...,P_{s}\}$.
\STATE Apply Algorithm \ref{alg local integral quotient} in Section
\ref{Sec General local approch} below to compute
$\mathfrak{G}\left(  P_{i}\right)  $ for all $i$.
\RETURN ${\bigcap\nolimits_{i=1}^{s}}\mathfrak{G}\left(  P_{i}\right)  $.
\end{algorithmic}
\end{algorithm}

\begin{remark}
It is clear from Proposition \ref{prop global from local} that we may choose
any partition $\operatorname*{Sing}(\Gamma)={\textstyle\bigcup\nolimits_{i=1}%
^{s}}W_{i}$ of $\operatorname*{Sing}(\Gamma)$ and have
\[
\mathfrak{G}={\textstyle\bigcap\nolimits_{i=1}^{s}}\mathfrak{G}(W_{i})\text{.}%
\]
This is useful in that for some subsets $W_{i}$, specialized approaches or a
priori knowledge may ease the computation of $\mathfrak{G}(W_{i})$. In Section
\ref{sec summary}, we will present some ideas in this direction for plane curves.

\end{remark}



\section{Computing local adjoint ideals\label{Sec General local approch}}

In this section, we modify Algorithm
\ref{Alg ideal quotient gorenstein adjoint ideal} so that it computes the
local Gorenstein adjoint ideal at a point $P$ from a minimal local
contribution at $P$ via ideal quotients.

We consider a curve  
$\Gamma\subset\mathbb{P}_{k}^{r}$ as in Notation \ref{rem setup curve in PPr}
with affine part\footnote{To cover all singular points of $\Gamma$, we may have 
to choose affine charts other than that considered in Notation \ref{rem setup curve in PPr}.} 
$C$ and a point $P\in\operatorname*{Sing}(C)$. Let $\frac{1}{d}U$ 
be the minimal local contribution to $\overline{k[C]}$ at $P$; so $U\subset k[${$C]$} 
is an ideal and $d\in U$ is nonzero.

\begin{proposition}
\label{prop from local contribution}With notation as above, and given $Q\in
C$, we have
\[
(\left\langle d\right\rangle _{k[C]}:U)_{Q}=\left\{
\begin{tabular}
[c]{ll}%
$\mathcal{C}_{\mathcal{O}_{C,Q}}$ & if $Q=P$,\\
$\mathcal{O}_{C,Q}$ & if $Q\neq P$.
\end{tabular}
\right.
\]

\end{proposition}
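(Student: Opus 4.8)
The statement is local, so the plan is to compute the localization $(\langle d\rangle_{k[C]}:U)_Q$ at each point $Q\in C$ separately, using that forming ideal quotients commutes with localization (both $\langle d\rangle_{k[C]}$ and $U$ are finitely generated ideals of the Noetherian ring $k[C]$). Thus $(\langle d\rangle_{k[C]}:U)_Q = \langle d\rangle_{\mathcal{O}_{C,Q}} :_{\mathcal{O}_{C,Q}} U_Q$. Recall from the setup that $\tfrac1d U$ is the \emph{minimal} local contribution to $\overline{k[C]}$ at $P$, meaning $(\tfrac1d U)_P = \overline{\mathcal{O}_{C,P}}$ and $(\tfrac1d U)_Q = \mathcal{O}_{C,Q}$ for $Q\neq P$.

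\textbf{Case $Q=P$.} Here $(\tfrac1d U)_P = \overline{\mathcal{O}_{C,P}}$, so $\tfrac1{d}U_P = \overline{\mathcal{O}_{C,P}}$, i.e. $U_P = d\,\overline{\mathcal{O}_{C,P}}$ inside $\operatorname{Q}(\mathcal{O}_{C,P})$. Then, exactly as in the proof of Proposition \ref{prop adjoint ideal and conductor}, for $s\in\mathcal{O}_{C,P}$ we have $s\in \langle d\rangle_{\mathcal{O}_{C,P}}:_{\mathcal{O}_{C,P}} U_P$ iff $s\cdot g\in\langle d\rangle$ for all $g\in U_P = d\,\overline{\mathcal{O}_{C,P}}$, iff $s\cdot d\,\overline{\mathcal{O}_{C,P}}\subset d\,\mathcal{O}_{C,P}$, iff (dividing by the nonzerodivisor $d$) $s\cdot\overline{\mathcal{O}_{C,P}}\subset\mathcal{O}_{C,P}$, iff $s\in\mathcal{C}_{\mathcal{O}_{C,P}}$. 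Hence $(\langle d\rangle_{k[C]}:U)_P = \mathcal{C}_{\mathcal{O}_{C,P}}$.

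\textbf{Case $Q\neq P$.} Here $(\tfrac1d U)_Q = \mathcal{O}_{C,Q}$, which gives $U_Q = d\,\mathcal{O}_{C,Q} = \langle d\rangle_{\mathcal{O}_{C,Q}}$. Therefore $\langle d\rangle_{\mathcal{O}_{C,Q}}:_{\mathcal{O}_{C,Q}} U_Q = \langle d\rangle_{\mathcal{O}_{C,Q}} : \langle d\rangle_{\mathcal{O}_{C,Q}} = \mathcal{O}_{C,Q}$, since $d$ is a nonzerodivisor and $\langle d\rangle : \langle d\rangle$ is the whole ring whenever the generators agree (or simply: $x\cdot d\in \langle d\rangle$ for every $x$). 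This yields $(\langle d\rangle_{k[C]}:U)_Q = \mathcal{O}_{C,Q}$.

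\textbf{Main obstacle.} No step is genuinely hard; the one point requiring a little care is the commutation of the colon ideal with localization and keeping straight which object $U_Q$ equals (it is $d\cdot(\tfrac1d U)_Q$, not $(\tfrac1d U)_Q$ itself, so one must not forget to clear the denominator $d$ before and after). Granting that, the proposition is an immediate consequence of the defining property of a \emph{minimal} local contribution together with the computation of the conductor as an ideal quotient from Proposition \ref{prop adjoint ideal and conductor}.
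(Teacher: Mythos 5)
Your proof is correct and follows essentially the same route as the paper's: both use the minimality of the local contribution to pin down $(\tfrac1d U)_Q$ and then commute with localization. The only cosmetic difference is that the paper first identifies $\langle d\rangle_{k[C]}:U$ with the conductor $\mathcal{C}_{(\frac1d U)/k[C]}$ (via Proposition \ref{prop adjoint ideal and conductor}) and then commutes the conductor with localization, whereas you commute the ideal quotient with localization first and unwind the quotient explicitly at each point; the computations are the same.
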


\begin{proof}
By the minimality assumption, we have%
\[
\left(  \frac{1}{d}U\right)  _{Q}=\left\{
\begin{tabular}
[c]{ll}%
$\overline{\mathcal{O}_{C,Q}}$ & if $Q=P$,\\
$\mathcal{O}_{C,Q}$ & if $Q\neq P$.
\end{tabular}
\ \right.
\]
The claim follows since localization commutes with forming the conductor:%
\[
\left(  \left\langle d\right\rangle _{k[C]}:U\right)  _{Q}=\left(
\mathcal{C}_{\left(  \frac{1}{d}U\right)  \;/\;k[C]}\right)  _{Q}%
=\mathcal{C}_{\left(  \frac{1}{d}U\right)  _{Q}\;/\;k[C]_{Q}}\text{.}%
\]

\vspace{-0.5cm}
\end{proof}

Now, we argue as in the proof of Proposition
\ref{Cor adjoint ideal from conductor}: From Proposition
\ref{prop from local contribution} and Remark \ref{rem:local-adjoint-ideal},
it follows that $\left\langle d\right\rangle _{k[C]} :U$ coincides with the
ideal obtained by dehomogenizing $\mathfrak{G}(P)$ with respect to $X_{0}$ and
mapping the result to $k[C]$. Hence, since $\mathfrak{G}(P)$ is saturated,
Algorithm \ref{alg local integral quotient} below indeed computes
$\mathfrak{G}(P)$.

\begin{algorithm}[H]
\caption{Local Gorenstein adjoint ideal from local contribution}
\label{alg local integral quotient}
\begin{algorithmic}[1]
\REQUIRE A curve $\Gamma\subset\mathbb{P}_{k}^{r}$ with affine part $C$
as in Notation \ref{rem setup curve in PPr}
and a point $P\in\operatorname{Sing}(  C )  $.
\ENSURE The local Gorenstein adjoint ideal $\mathfrak{G}(P)  $ of
$C$.
\STATE Compute polynomials $d, a_{0},\dots, a_{s}\in k\left[  X_{1}%
,...,X_{r}\right]  $ such that the fractions $\frac{a_{i}\left(
x_{1},...,x_{r}\right)  }{d\left(  x_{1},...,x_{r}\right)  }$ generate the
minimal local contribution to $\overline{k\left[  C\right]  }$ at $P$ as a
$k\left[  C\right]  $-module.
\STATE Compute the ideal quotient%
\[
\mathcal{C}=(\left\langle d\right\rangle +I\left(  C\right)  ):(\left\langle
a_{0},...,a_{s}\right\rangle +I\left(  C\right)  )\subset k\left[
X_{1},...,X_{r}\right]  .
\]
\vspace{-0.4cm}
\RETURN the homogenization of $\mathcal{C}$ with respect to $X_{0}$.
\end{algorithmic}
\end{algorithm}



\begin{example}
\label{exa:local-ideal-quotient} We compute the local Gorenstein adjoint
ideals for the curve given in Example \ref{ex a4 E8} with affine equation
\[
X^{5}-Y^{2}\left(  1-Y\right)  ^{3}=0\text{.}%
\]
For the $A_{4}$-singularity $P_{1}$, we found
\[
d_{1}=x^{2}\;{\text{ and }}\;U_{1}=\left\langle x^{2},\text{ }y(y-1)^{3}%
\right\rangle _{\mathbb{C}[C]}\text{,}%
\]
so that
\[
\mathfrak{G}(P_{1})=\left\langle X^{2},\text{ }Y\right\rangle \text{.}%
\]
For the $E_{8}$ singularity $P_{2}$, we observed that
\[
d_{2}=x^{3}\;{\text{ and }}\;U_{2}=\left\langle x^{3},\text{ }x^{2}%
y^{2}\left(  y-1\right)  ,\text{ }y^{2}\left(  y-1\right)  ^{2}\right\rangle
_{{\mathbb{C}[C]}}\text{,}%
\]
leading to
\[
\mathfrak{G}(P_{2})=\left\langle X^{3},\text{ }X(Y-Z),(Y-Z)^{2}\right\rangle
\text{.}%
\]
Note that $\mathfrak{G}(P_{1})$ and $\mathfrak{G}(P_{2})$ are the ideals
already obtained in Example \ref{ex A4}.
\end{example}

\section{Improvements to the local strategy for plane
curves\label{sec summary}}

In this section, we focus on the case of a plane curve $\Gamma$ with affine
part $C=V(f)$ and $\operatorname{Sing}(\Gamma)=\operatorname{Sing}(C)$ as
in Notation \ref{rmk setup}. \emph{For simplicity of the presentation, we
suppose throughout the section that our ground field $k=\mathbb{C}$}.

As explained in Section \ref{Sec Global from local}, the Gorenstein adjoint
ideal $\mathfrak{G}$ can be computed as the intersection of local Gorenstein
ideals via a partition of $\operatorname*{Sing}(C)$. To begin with, consider
the following partition:
\begin{equation}
\operatorname{Sing}(C)=W_{2}\cup W_{3}\cup\cdots\cup W_{r}\cup W^{\prime},
\label{part-sing}%
\end{equation}
where, for all $i$, $W_{i}$ denotes the locus of ordinary $i$-fold points
(ordinary multiple points of multiplicity $i$), and where $W^{\prime}$
collects the remaining singularities of $C$. In particular, $W_{2}$ is the set
of nodes of $C$.

\begin{lemma}
Let \mbox{$P\in\operatorname*{Sing}(C)$}, and let
\mbox{$\mathfrak{m}_P\subset k[X,Y]$} be the corresponding maximal ideal. If
$P$ is an ordinary $i$-fold point of $C$, then
\[
\mathfrak{G}(P)=\mathfrak{m}_{P}^{i-1}\text{.}%
\]

\end{lemma}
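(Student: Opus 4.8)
The statement is local at the point $P$, so by Theorem \ref{thm adjoint ideal locally conductor} and the definition of $\mathfrak{G}(P)$, it suffices to show that $\mathcal{C}_{\mathcal{O}_{C,P}}=(\mathfrak{m}_P^{i-1})_P$ and that $\mathfrak{m}_P^{i-1}$ is already saturated and supported only on $\{P\}$, so that it equals the \emph{largest} homogeneous ideal with the prescribed localization. The saturation and support claim is easy: since $P$ is an ordinary $i$-fold point, after a projective change of coordinates we may take $P=(0:0:1)$ so that $\mathfrak{m}_P=\langle X,Y\rangle$ in the affine chart $Z=1$, and the homogenization of $\mathfrak{m}_P^{i-1}$ is $\langle X,Y\rangle^{i-1}\subset k[X,Y,Z]$, which is saturated with $\operatorname{Proj}$ equal to the reduced point $P$; localizing at any other point of $C$ gives the full local ring, exactly as required for a local Gorenstein adjoint ideal. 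So the content is the local conductor computation.

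\textbf{Main step: computing the conductor of an ordinary $i$-fold point.} Work in $\widehat{\mathcal O}_{C,P}$, the completion, which for an ordinary $i$-fold point is the semilocal ring of $i$ smooth branches with pairwise distinct, pairwise transverse tangent directions. Concretely, after a change of coordinates, $f$ has the form $f=\prod_{j=1}^{i}(Y-\lambda_j X)+(\text{higher order})$ with the $\lambda_j$ distinct, and the normalization $\overline{\mathcal O}_{C,P}\cong \prod_{j=1}^{i} k[[t_j]]$ with the $j$-th branch parametrized so that $x\mapsto t_j$ and $y\mapsto \lambda_j t_j+\cdots$. The plan is to identify $\overline{\mathcal O}_{C,P}/\mathcal{O}_{C,P}$ concretely: $\mathcal{O}_{C,P}$ consists of those tuples $(g_1(t_1),\dots,g_i(t_i))$ that come from a single power series in $x,y$, and a dimension count (the ordinary $i$-fold point has $\delta_P=\binom{i}{2}$, standard, e.g. via the $\delta=\sum_{j<k}(\text{branches }j,k)\text{ intersection} +\sum_j\delta_j=\binom i2$ formula) pins down the quotient. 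Then $\mathcal{C}_{\mathcal{O}_{C,P}}=\{h\in\mathcal{O}_{C,P}\mid h\overline{\mathcal O}_{C,P}\subset\mathcal O_{C,P}\}$; the claim is this equals $\mathfrak{m}_P^{i-1}\mathcal O_{C,P}$. One inclusion: any element of $\mathfrak{m}_P^{i-1}$ restricted to the $j$-th branch vanishes to order $\geq i-1$ in $t_j$, and multiplying an arbitrary normalization element (arbitrary tuple of power series) by such an element lands back in $\mathcal O_{C,P}$ because vanishing to order $i-1$ on every branch is enough to be interpolated by a genuine element of $\mathcal O_{C,P}$ — here the transversality of the tangent lines is what makes the interpolation possible (Vandermonde in the $\lambda_j$). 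The reverse inclusion: if $h$ conducts the whole normalization into $\mathcal O_{C,P}$, then in particular $h$ times each idempotent-like element of $\prod k[[t_j]]$ must lie in $\mathcal O_{C,P}$, forcing $h$ to vanish to order $\geq i-1$ on each branch; and vanishing to order $\geq i-1$ on all $i$ transverse branches forces $h\in\mathfrak{m}_P^{i-1}$ (again by the transversality/Vandermonde argument, since a polynomial of order $<i-1$ can vanish to high order on at most... — one shows a form of degree $d<i-1$ cannot vanish to order $i-1$ simultaneously along $i$ distinct lines unless it is zero).

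\textbf{Where the difficulty lies.} The routine part is the $\delta$-invariant bookkeeping and the homogenization/saturation remarks. The genuinely careful step is the two transversality arguments: (a) that an element vanishing to order $i-1$ on each of the $i$ branches is exactly $\mathfrak{m}_P^{i-1}$ locally, and (b) that such an element does conduct the full normalization back into $\mathcal O_{C,P}$. Both hinge on the same linear-algebra fact about the non-degeneracy of evaluation/interpolation at $i$ distinct transverse directions, which I would phrase once as a lemma about the ring $k[[x,y]]$ and the map to $\prod_{j=1}^i k[[t_j]]$, $t_j\mapsto(x,\lambda_j x+\cdots)$, namely that the image of $\mathfrak{m}^{i-1}$ is exactly $\prod_j (t_j^{i-1})$ and the cokernel of $\mathcal O_{C,P}\hookrightarrow \prod_j(t_j^{i-1})/(\text{something})$ vanishes. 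An alternative cleaner route, which I would actually prefer to present, is to cite the known local structure: an ordinary $i$-fold point is analytically isomorphic to the union of the $i$ coordinate-type lines (the cone over $i$ points in $\mathbb P^1$), for which the conductor is classically $\mathfrak{m}^{i-1}$ (e.g. it is the defining property that the adjoint/conductor condition at an ordinary multiple point of multiplicity $i$ is ``pass through $P$ with multiplicity $\geq i-1$'', which is exactly the Brill–Noether condition and here coincides with Gorenstein's since plane curve singularities are Gorenstein); then the lemma reduces to checking $\dim_k \mathcal O_{C,P}/\mathfrak m_P^{i-1}=\binom i2=\delta_P$, which combined with the general inequality $\dim_k(\mathcal O_{C,P}/\mathcal C)\le \delta_P$ and the Gorenstein equality forces $\mathfrak m_P^{i-1}=\mathcal C_{\mathcal O_{C,P}}$ once one knows $\mathfrak m_P^{i-1}\subseteq\mathcal C_{\mathcal O_{C,P}}$. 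That last containment is the only thing still needing the transversality computation, and it is the step I expect to spend the most care on.
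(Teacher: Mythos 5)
Your proof is correct in outline, but it takes a genuinely different route from the paper's: the paper disposes of the conductor computation in one line by citing the classical result (Matlis and Greco--Valabrega) that at an ordinary $i$-fold point of a plane curve the conductor is $\mathfrak{m}_{C,P}^{i-1}$, and then invokes the definition of $\mathfrak{G}(P)$. You instead give a self-contained argument: establish the containment $\mathfrak{m}_{C,P}^{i-1}\subseteq\mathcal{C}_{\mathcal{O}_{C,P}}$ by a transversality/Vandermonde computation in the completion (the element $\prod_{k\neq j}(y-\phi_k(x))$, where $y=\phi_k(x)$ parametrizes the $k$-th branch, vanishes identically on the other branches and has order exactly $i-1$ on branch $j$ with leading coefficient $\prod_{k\neq j}(\lambda_j-\lambda_k)\neq 0$, so its $k[[x]]$-span realizes the $j$-th factor $(t_j^{i-1})$ inside $\prod_j k[[t_j]]$), and then force equality by comparing colengths, using $\dim_k(\mathcal{O}_{C,P}/\mathfrak{m}_{C,P}^{i-1})=\binom{i}{2}=\delta_P=\dim_k(\mathcal{O}_{C,P}/\mathcal{C}_{\mathcal{O}_{C,P}})$, the last equality being the Gorenstein property of plane curve singularities already recorded in Section \ref{Sec adjoint ideals}. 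Both approaches are valid; the paper's is more economical but externalizes the content, while yours makes the mechanism explicit and clarifies exactly where the transversality of the tangent directions enters. Your remark that the saturation/support side conditions are automatic because $\langle X,Y\rangle^{i-1}$ is primary to $\langle X,Y\rangle$ is also correct and worth keeping, since the lemma's statement quietly conflates the affine ideal with its homogenization. Note only that the passage between $\mathcal{O}_{C,P}$ and its completion, which you use implicitly, needs the standard observation (used in the paper's Lemma \ref{lem Ak} via \eqref{equ:cond-compl}) that forming the conductor commutes with completion.
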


\begin{proof}
Since $C$ is a plane curve and $P$ is an ordinary $i$-fold point of $C$, 
the conductor $\mathcal{C}_{\mathcal{O}_{C,P}}
=\mathfrak{m}_{C,P}^{i-1}$, where $\mathfrak{m}_{C,P} $ is the maximal ideal
of $\mathcal{O}_{C,P}$ (see \cite{Matlis}, \cite{GrecoValabrega}). The result
follows from the very definition of $\mathfrak{G}(P)$.

\end{proof}

Applying the lemma to the partition \eqref{part-sing}, we get the intersection
of ideals
\begin{equation}
\mathfrak{G}=I\left(  W_{2}\right)  \cap I\left(  W_{3}\right)  ^{2}\cap
\cdots\cap I\left(  W_{r}\right)  ^{r}\cap\mathfrak{G}(W^{\prime})\text{.}
\label{part-sing-II}%
\end{equation}
Hence, in the case where $\Gamma$ is known to have ordinary multiple points as
singularities only (that is, $W^{\prime}=\emptyset$), we can compute
$\mathfrak{G}$ in a very efficient way by using Algorithm
\ref{alg ordinary multiple points} below (see \cite{Boehm}).

\begin{algorithm}[H]
\caption{Gorenstein adjoint ideal, ordinary multiple points only}
\label{alg ordinary multiple points}
\begin{algorithmic}[1]
\REQUIRE A plane curve $\Gamma$ of degree $n$ with defining polynomial $F$
as in Notation \ref{rmk setup} with only ordinary multiple points.
\ENSURE The Gorenstein adjoint ideal $\mathfrak{G}$ of $\Gamma$.
\STATE $J_{1}:=\left\langle \frac{\partial F}{\partial X},\frac{\partial
F}{\partial Y},\frac{\partial F}{\partial Z}\right\rangle$ \quad\quad\quad\quad(the ideal
defining $\operatorname{Sing}(\Gamma)$)
\STATE  $i:=1$
\WHILE {$({J_{i}}:\left\langle X,Y,Z\right\rangle ^{\infty})\neq
\left\langle 1\right\rangle $}
\STATE $i:=i+1$
\STATE $J_{i}:=\left\langle \frac{\partial^{j+l+m}F}{\partial X^{j}\partial
Y^{l}\partial Z^{m}}\mid j+l+m=i\text{, }j,l,m\in\mathbb{N}_{0}\right\rangle
$
\ENDWHILE
\STATE $B:=\left\langle X,Y,Z\right\rangle ^{n-i}$
\WHILE {$i>0$}
\STATE $I_{i}:=\left(  J_{i-1}:B^{\infty}\right) $ \quad\quad\quad(the ideal of the
$i$-fold points of $\Gamma$)
\STATE $B:=((B\cap I_{i}^{i-1}):\left\langle X,Y,Z\right\rangle ^{\infty})$
\STATE $i:=i-1$
\ENDWHILE
\RETURN $B$
\end{algorithmic}
\end{algorithm}

In the general case, Equation \eqref{part-sing-II} allows us to reduce the
computation of $\mathfrak{G}$ to the less involved task of computing
$\mathfrak{G}(W^{\prime})$ as soon as we detect the ordinary $i$-fold points.
To begin with treating these, here is how to find the nodes:

\begin{remark}
We know how to find \emph{all} singularities: $\operatorname{Sing}(C)$ is
given by the ideal
\[
J=\left\langle f,\frac{\partial f}{\partial X},\frac{\partial f}{\partial
Y}\right\rangle .
\]
By the Morse lemma (see \cite{Milnor}), a point $P\in\operatorname{Sing}(C)$
is a node iff the Hessian matrix $\operatorname{Hess}(f)$ formed by the second
partial derivatives of $f$ is non-degenerate at $P$. That is, $P$ is a node
iff
\[
I(P)+\left\langle \det(\operatorname{Hess}(f))\right\rangle =k{[X,Y]}\text{.}%
\]
This gives us a fast way of computing $W_{2}$.
\end{remark}

Carrying our efforts one step further, we discuss the local analysis of the
singularities via invariants. This yields an efficient method not only for
finding the delta invariant, but also for detecting the ordinary $i$-fold
points, for each $i$:

\begin{remark}
\label{rem:comp-inv-plane-curves} Let $P\in\operatorname{Sing}(C)$. After a
translation, we may assume that $P=(0,0)$ is the origin. Write $m_{P}$ for the
multiplicity and%
\[
\mu_{P}=\dim_{\;\!k}\left(  k\left[  \left[  X,Y\right]  \right]
\Big/\left\langle \frac{\partial f}{\partial X},\frac{\partial f}{\partial
Y}\right\rangle \right)
\]
for the \emph{Milnor number} of $C$ at $P$. Then $m_{P}=\deg h_{p}$, where
$h_{P}$ is the lowest degree homogeneous summand of the Taylor expansion of
$f$ at $P$. Recall that $\mu_{P}$ can be computed via standard bases (see
\cite{GP}). Furthermore, if the Newton polygon of $f$ is non-degenerate
(otherwise, successively blow up), the \emph{number of branches} of $f$ at $P$
can be computed as%
\[
r_{P}=\sum\nolimits_{j=1}^{s-1}\gcd\left(  V_{X}^{(j+1)}-V_{X}^{(j)},\text{
}V_{Y}^{(j+1)}-V_{Y}^{(j)}\right)  ,
\]
where $V^{(1)},...,V^{(s)}$ are the (ordered) vertices of the Newton polygon
(and $X$ and $Y$ refer to the respective coordinates). This is immediate from
\cite[Section 8.4, Lemma 3]{Brieskorn}. The delta invariant of $C$ at $P$ is
then obtained as
\[
\delta_{P}=\frac{1}{2}(\mu_{P}+r_{P}-1)
\]
(see, for example, \cite[Chapter 1, Proposition 3.34]{GLSsing}). Furthermore,
$P$ is an ordinary $i$-fold point iff $h_{P}$ is square-free and $m_{P}=i$.
Equivalently,
\[
{\small \left(  m_{P},r_{P},\delta_{P}\right)  =\left(  i,\;i,\;\binom{i}%
{2}\right)  }\text{.}%
\]
See \cite[Chapter 1, Proposition 3.33]{GLSsing}.

\end{remark}

The local analysis of the singularities may be used to further refine our
partition of $\operatorname{Sing}(C)$. For example, singularities of type
$ADE$ can be identified as follows:

\begin{remark}
\label{rmk charact ade}With notation as in Remark
\ref{rem:comp-inv-plane-curves}, the point $P=(0,0)\in\operatorname{Sing}(C)$ is

\begin{enumerate}
\item of type $A_{n}$, $n\geq2$, iff $h_{P}=l_{1}^{2}$, with $l_{1}\in k[X,Y]$
linear, and $\mu_{P}=n$,

\item of type $D_{n}$, $n\geq4$, iff $h_{P}=l_{1}l_{2}l_{3}$ or $h_{P}%
=l_{1}^{2}l_{2}$, with pairwise different linear polynomials $l_{j}\in
k[X,Y]$, and $\mu_{P}=n$, and

\item of type $E_{n}$, $n=6,7,8$, iff $h_{P}=l_{1}^{3}$, with $l_{1}\in
k[X,Y]$ linear, and $\mu_{P}=n$.
\end{enumerate}

Here, in (2), $h_{P}$ splits into three different linear factors iff $P$ is of
type $D_{4}$. See, for example, \cite[Chapter 1, Theorems 2.48, 2.51,
2.54]{GLSsing}.
\end{remark}

To describe the local Gorenstein adjoint ideal at a singularity of type $A$,
$D$, or $E$, we use the following notation:

\begin{notation}
For any element $g\in k[[X,Y]]$, let $g_{j}=\operatorname*{taylor}\left(
g,j\right)  \in k[X,Y] $ be the Taylor expansion of $g$ at $P=(0,0)$ modulo
$O(j+1)$.\footnote{The notation $O(m)$ stands for terms of degree $\geq m$.}
\end{notation}

If $C$ has a singularity of type $A_{n}$ at $P=(0,0)$, we may write $f$ in the
form $f=T^{2}+W^{n+1}$, where $T,W\in k[[X,Y]]$ is a regular system of
parameters.
Let $s=\left\lfloor \frac{n+1}{2}\right\rfloor $ (the meaning of $s$ will
become clear in the proof of Lemma \ref{lem Ak}). We may compute the Taylor
expansion $T_{s-1}\in k[X,Y]$ as follows. If $n$ and thus $s$ is equal to $1$,
set $T_{0}=0$. Otherwise, inductively solve $f$ for $T$: Start by choosing a
linear form $T_{1}\in k[X,Y]$ such that $\operatorname*{taylor}(f,2)=T_{1}%
^{2}$. Supposing that $1<j<s-1$ and $T_{j}=T+O(j+1)$ has already been
computed, write
\[
\operatorname*{taylor}(f-T_{j}^{2},\text{ }j+2)=2T_{1}\cdot m,
\]
with $m\in k[X,Y]$ homogeneous of degree $j+1$, and set $T_{j+1}=T_{j}+m$.

\begin{lemma}
\label{lem Ak} Let $C$ have a singularity of type $A_{n}$, $n\geq1$, at
$P=(0,0)$. Set $s=\left\lfloor \frac{n+1}{2}\right\rfloor $, and let $T_{s-1}$
be defined as above. Then $\mathfrak{G}(P)$ is the homogenization of
\[
\left\langle X^{s},\;T_{s-1},\;Y^{s}\right\rangle \subset k[X,Y]
\]
with respect to $Z$.
\end{lemma}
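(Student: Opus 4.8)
By the very definition of the local Gorenstein adjoint ideal $\mathfrak{G}(P)$ as the largest homogeneous ideal with $\mathfrak{G}(P)_P = \mathcal{C}_{\mathcal{O}_{C,P}}$ (and since such an ideal is automatically saturated with support $\{P\}$), it suffices to show that the ideal $\mathfrak a := \langle X^s, T_{s-1}, Y^s\rangle \subset k[X,Y]$, mapped into $\mathcal{O}_{C,P}$, equals the conductor $\mathcal{C}_{\mathcal{O}_{C,P}}$; then homogenizing gives $\mathfrak{G}(P)$, exactly as in the argument following Proposition \ref{prop from local contribution}. So the whole proof is local and analytic: I would pass to the complete local ring $\widehat{\mathcal{O}_{C,P}} = k[[X,Y]]/\langle f\rangle$ and use that $f = T^2 + W^{n+1}$ for a regular system of parameters $T, W$.

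\smallskip
\noindent\textbf{Key steps.} First I would set up the normalization explicitly in the analytic model. For an $A_n$ singularity with $f = T^2 - (-1)\,W^{n+1}$ (signs absorbed into $W$), the normalization $\overline{\mathcal{O}}$ is generated over $\mathcal{O}=k[[T,W]]/\langle T^2+W^{n+1}\rangle$ by the single element $u = T/W^s$ when $n$ is odd (so $n+1 = 2s$), and one works branch-by-branch (two branches) when $n$ is even — or, more uniformly, $\overline{\mathcal{O}} = \mathcal{O}[T/W^s]$ with $(T/W^s)^2 = W^{n+1-2s}$, which is a unit when $n$ is odd and is $W$ when $n$ is even. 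In either case the conductor of $\mathcal{O}$ in $\overline{\mathcal{O}}$ is classically $\langle W^s, T\rangle$: indeed $\delta_P = s$, $W^s\overline{\mathcal{O}} \subset \mathcal{O}$ since $W^s\cdot(T/W^s) = T \in \mathcal{O}$, $T\overline{\mathcal{O}}\subset\mathcal{O}$ similarly, and a length/dimension count $\dim_k(\mathcal{O}/\langle W^s,T\rangle) = s = \delta_P$ shows this ideal is exactly the conductor (using that plane curve singularities are Gorenstein, so $\dim_k(\mathcal{O}/\mathcal C) = \delta_P$). Second, I would translate this from the parameters $T,W$ back to the coordinates $X,Y$: the point is that $\langle X^s, Y^s, T\rangle = \langle W^s, T\rangle$ as ideals of $\mathcal{O}_{C,P}$, because modulo $T$ the ring is $k[[X,Y]]/\langle T, W^{n+1}\rangle$ in which $W$ generates the maximal ideal, so $\langle X^s, Y^s\rangle$ and $\langle W^s\rangle$ generate the same ideal there (both equal $\mathfrak m^s$ in the one-dimensional local ring $\mathcal{O}_{C,P}/\langle T\rangle$, since $T = T_1 + O(2)$ with $T_1$ linear, hence the images of $X,Y$ in that ring each generate the maximal ideal up to units after a linear change — more carefully, $\mathfrak m^s_{C,P} + \langle T\rangle = \langle W^s\rangle + \langle T\rangle$). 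Third, I must check that the finite-order truncation $T_{s-1}$ may be used in place of the exact $T$: since $T - T_{s-1} \in \mathfrak m^s$ and $\mathfrak m^s \subset \langle X^s, Y^s\rangle + (\text{higher order}) \subset \langle X^s, Y^s\rangle \cdot \mathcal{O}_{C,P} + \mathfrak m\cdot(\ldots)$, an application of Nakayama shows $\langle X^s, Y^s, T\rangle = \langle X^s, Y^s, T_{s-1}\rangle$ in $\mathcal{O}_{C,P}$; this is where the recursive construction of $T_{s-1}$ in the paragraph before the lemma, and the bound $j < s-1$ there, enters. Finally I would note that the homogenization with respect to $Z$ of $\langle X^s, T_{s-1}, Y^s\rangle$ recovers $\mathfrak{G}(P)$ because $\mathfrak{G}(P)$ is saturated and supported at $P$.

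\smallskip
\noindent\textbf{Main obstacle.} The delicate point is the bookkeeping that justifies replacing $W^s$ by $X^s, Y^s$ and $T$ by its truncation $T_{s-1}$: one has to be sure that $\langle X^s, Y^s\rangle$ together with $T_{s-1}$ generates precisely $\mathfrak m_{C,P}^{s-1}\cap(\ldots)$ — no more, no less — rather than something sandwiched strictly between $\mathfrak m^s$ and the conductor. The cleanest route is probably to verify the two containments $\langle X^s, T_{s-1}, Y^s\rangle \subset \mathcal{C}_{\mathcal{O}_{C,P}}$ (by checking each generator multiplies $\overline{\mathcal{O}}$ into $\mathcal{O}$, using $\overline{\mathcal{O}} = \mathcal{O} + \mathcal{O}\cdot(T/W^s)$) and then $\dim_k(\mathcal{O}_{C,P}/\langle X^s,T_{s-1},Y^s\rangle) \le s = \delta_P = \dim_k(\mathcal{O}_{C,P}/\mathcal{C}_{\mathcal{O}_{C,P}})$, which forces equality. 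The dimension count is the step most prone to error, since it requires understanding the monomial basis of $\mathcal{O}_{C,P}/\langle X^s, Y^s, T_{s-1}\rangle$ carefully; I would reduce it to the monomial model $k[[T,W]]/\langle T, W^s\rangle$ after checking $T_{s-1}$ and $W$ (or $X$, or $Y$) still form an appropriate filtered generating set.
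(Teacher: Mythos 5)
Your overall strategy matches the paper's: pass to the completion, identify the analytic model $A=k[[T,W]]/\langle T^2+W^{n+1}\rangle$, compute the conductor there, translate $\langle T,W^s\rangle$ into $\langle X^s, T, Y^s\rangle$ using the linear independence of the linear part $T_1$ from at least one of $X,Y$, and finally replace $T$ by its truncation $T_{s-1}$ using $T-T_{s-1}\in\mathfrak m^s$. Two of your choices are genuine (but minor) variations worth noting: you describe $\overline A$ directly as $A[T/W^s]$ instead of invoking the step-by-step output of the Grauert--Remmert normalization algorithm cited from \cite[Sect.~4]{BDS}, and you establish $\mathcal C_A=\langle T,W^s\rangle$ by the containment-plus-dimension-count argument using that plane curve singularities are Gorenstein ($\dim_k(\mathcal O/\mathcal C)=\delta_P=s$), whereas the paper reads off the conductor from the explicit $k[[t]]$-module presentation of $\overline A$. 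Both of those work and the dimension count is a clean alternative.

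There is, however, one real gap: your entire computation lives in the complete local ring $\widehat{\mathcal O_{C,P}}$, but the lemma is a statement about the conductor of $\mathcal O_{C,P}$ itself. You state as your reduction that it suffices to show $\langle X^s,T_{s-1},Y^s\rangle$, mapped into $\mathcal O_{C,P}$, equals $\mathcal C_{\mathcal O_{C,P}}$, and then you never return from $\widehat{\mathcal O_{C,P}}$ to $\mathcal O_{C,P}$. Passing back is not automatic: one needs that completion commutes with normalization (so that $\mathcal C_{\widehat B}=\mathcal C_B\otimes_B\widehat B$ for $B=\mathcal O_{C,P}$) and then faithful flatness of $B\to\widehat B$ to pull the equality of ideals back to $B$. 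The paper supplies exactly this via the excellence of $B$, giving $\overline{\widehat B}=\widehat{\overline B}$ and the chain of identifications in \eqref{equ:cond-compl}. Without that step your argument only identifies $\mathcal C_{\widehat B}$, not $\mathcal C_B$.

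One smaller point: the intermediate assertion that "the images of $X,Y$ in $\mathcal O/\langle T\rangle$ each generate the maximal ideal up to units" is false as stated -- if $T_1$ is a scalar multiple of $X$ then $\bar X$ lies in $\mathfrak m^2$ of the quotient. You do correct this in the parenthetical ($\mathfrak m^s+\langle T\rangle=\langle W^s\rangle+\langle T\rangle$), but the cleanest formulation is the paper's dichotomy: $\langle X,Y\rangle=\langle X,T\rangle$ or $\langle X,Y\rangle=\langle T,Y\rangle$, from which $W^s\in\langle X,Y\rangle^s\subset\langle X^s,T,Y^s\rangle$, and symmetrically $X^s,Y^s\in\langle T,W\rangle^s\subset\langle T,W^s\rangle$; the same dichotomy with $T_{s-1}$ in place of $T$ (they share the same linear part $T_1$) then handles the truncation without invoking Nakayama.
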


\begin{proof}
The case $n=1$ is clear, so we may suppose $n\geq2$. If $\mathfrak{G}^{\prime
}=\left\langle X^{s},\;T_{s-1},\;Y^{s}\right\rangle \subset k[X,Y]$, then
$\mathfrak{G}_{Q}^{\prime}=\mathcal{O}_{C,Q}$ for all $Q\in C\setminus\{P\}$,
so it suffices to show that $\mathfrak{G}_{P}^{\prime}=\mathcal{C}_{B}$, where
$B=\mathcal{O}_{C,P}$. For this, we pass to the completion
\[
\widehat{B}=k[[x,y]]=k[[X,Y]]/\langle f(X,Y)\rangle,
\]
and consider the isomorphism
\[
A=k[[t,w]]=k[[T,W]]/\left\langle T^{2}+W^{n+1}\right\rangle \rightarrow
\widehat{B}\text{, }t\mapsto T(x,y)\text{,}w\mapsto W(x,y).
\]
An analysis of the normalization algorithm applied to $A$ shows that
\[
\overline{A}=\sum_{i=0}^{n-s}k[[t]]\cdot w^{i}+\sum_{i=n-s+1}^{n}%
k[[t]]\cdot\frac{w^{i}}{t}\text{,}%
\]
and that it takes $s=\left\lfloor \frac{n+1}{2}\right\rfloor $ steps to reach
$\overline{A}$ (see \cite[Sect. 4]{BDS}). Hence,
\[
\mathcal{C}_{A}=\left\langle t,w^{s}\right\rangle _{A}\text{,}\text{ so that
}\;\mathcal{C}_{\widehat{B}}=\langle T(x,y),W(x,y)^{s}\rangle_{\widehat{B}}.
\]
Working in $k[[X,Y]]$, we write%
\[%
\begin{tabular}
[c]{lll}%
$T=aX+bY$ & and & $W=cX+dY$,
\end{tabular}
\
\]
with $a,b,c,d\in k[[X,Y]]$ and such that $ad-bc$ is a unit in $k[[X,Y]]$.
Since $\left\langle X,Y\right\rangle =\left\langle T,W\right\rangle $, it
follows that $\left\langle X,Y\right\rangle ^{s}=\left\langle T,W\right\rangle
^{s}\subset\left\langle T,W^{s}\right\rangle $. Since $\left\langle
X,Y\right\rangle =\left\langle X,T\right\rangle $ or $\left\langle
X,Y\right\rangle =\left\langle T,Y\right\rangle $, we have $W^{s}%
\in\left\langle X,Y\right\rangle ^{s}\subset\left\langle X^{s},T,Y^{s}%
\right\rangle $. We conclude that%
\[
\left\langle X^{s},T,Y^{s}\right\rangle =\left\langle T,W^{s}\right\rangle
\text{.}%
\]

If $s>1$, then $\left\langle X,Y\right\rangle =\left\langle X,T_{s-1}%
\right\rangle $ or $\left\langle X,Y\right\rangle =\left\langle T_{s-1}%
,Y\right\rangle $, hence, for any $s$, we have $\left\langle X,Y\right\rangle
^{s}\subset\left\langle X^{s},T_{s-1},Y^{s}\right\rangle $. We conclude that%
\[
\left\langle X^{s},T_{s-1},Y^{s}\right\rangle =\left\langle X^{s}%
,T,Y^{s}\right\rangle \text{.}%
\]
Now recall that $B$ is an excellent ring, which implies that $\overline
{\widehat{B}}=\widehat{\overline{B}}$ (see, for example, \cite[Sect. 1]{BDS}).
It follows that
\begin{equation}
\mathcal{C}_{\widehat{B}}=\operatorname*{Hom}\nolimits_{\widehat{B}}\left(
\overline{\widehat{B}},B\right)  =\operatorname*{Hom}\nolimits_{B}\left(
\overline{B},B\right)  \otimes_{B}\widehat{B}=\mathcal{C}_{B}\otimes
_{B}\widehat{B}\text{.} \label{equ:cond-compl}%
\end{equation}
Since completion is faithfully flat in the case considered here, we conclude
that
\[
\mathcal{C}_{B}=\left\langle x^{s},T_{s-1}(x,y),y^{s}\right\rangle
_{B}\text{.}%
\]
\end{proof}

\begin{remark}
In particular, if $P$ is a cusp, then $\mathfrak{G}(P)=\left\langle
X,\;Y\right\rangle $. So, in \eqref{part-sing-II}, nodes and cusps may be
treated simultaneously.
\end{remark}

If $C$ has a singularity of type $D_{n}$ at $P=(0,0)$, we may write $f$ in the
form $f=W\cdot\left(  T^{2}+W^{n-2}\right)  $, where $T,W\in k[[X,Y]]$ is a
regular system of parameters.
Let $s=\left\lfloor \frac{n}{2}\right\rfloor $. We may compute the Taylor
expansion $T_{s-2}\in k[X,Y]$ as follows. If $n=4$, set $T_{0}=0$. If $n\geq
5$, choose linear forms $T_{1},W_{1}\in k[X,Y]$ such that
$\operatorname*{taylor}(f,3)=T_{1}^{2}\cdot W_{1}$. For $j\leq s-2$, determine
$W_{j}=W+O(j+1)$ as the Puiseux expansion up to order $j$ of $f$ corresponding
to $W_{1}$. Supposing that $1<j<s-2$ and $T_{j}=T+O(j+1)$ has already been
computed, write
\[
\operatorname*{taylor}(f-T_{j}^{2}\cdot W_{j+1},\text{ }j+3)=2Z_{1}\cdot
W_{1}\cdot m,
\]
with $m\in k[X,Y]$ homogeneous of degree $j+1$, and set $T_{j+1}=T_{j}+m$.

\begin{lemma}
\label{lem D} Let $C$ have a singularity of type $D_{n}$, $n\geq4$, at
$P=(0,0)$. Set $s=\left\lfloor \frac{n}{2}\right\rfloor $, and let $T_{s-2}$
be defined as above. Then $\mathfrak{G}(P)$ is the homogenization of
\[
\left\langle X,\text{ }Y\right\rangle \cdot\left\langle X^{s-1},\;T_{s-2}%
,\;Y^{s-1}\right\rangle \subset k[X,Y]
\]
with respect to $Z$.
\end{lemma}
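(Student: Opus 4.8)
The plan is to mirror the structure of the proof of Lemma \ref{lem Ak}, using the local-at-infinity argument to reduce to a statement about the completion, and then carrying out the normalization computation for the $D_n$-singularity explicitly. First I would set $\mathfrak{G}' = \left\langle X, Y\right\rangle \cdot \left\langle X^{s-1}, T_{s-2}, Y^{s-1}\right\rangle \subset k[X,Y]$ and observe that $\mathfrak{G}'_Q = \mathcal{O}_{C,Q}$ for all $Q \in C \setminus \{P\}$, so by the very definition of $\mathfrak{G}(P)$ (and using that $\mathfrak{G}(P)$ is saturated, as in Remark \ref{rem:local-adjoint-ideal}) it suffices to show $\mathfrak{G}'_P = \mathcal{C}_B$ where $B = \mathcal{O}_{C,P}$. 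As in Lemma \ref{lem Ak}, I would pass to the completion $\widehat{B} = k[[x,y]] = k[[X,Y]]/\langle f\rangle$ and use the isomorphism with $A = k[[T,W]]/\left\langle W(T^2 + W^{n-2})\right\rangle$, together with the excellence of $B$ and faithful flatness of completion, so that it is enough to compute $\mathcal{C}_A$ and translate back via $\mathcal{C}_{\widehat{B}} = \mathcal{C}_B \otimes_B \widehat{B}$ (Equation \eqref{equ:cond-compl}).

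The heart of the argument is the explicit normalization of $A = k[[T,W]]/\langle W(T^2+W^{n-2})\rangle$. Here $A$ is no longer a domain: it has two minimal primes, corresponding to the branch $W = 0$ and the branch $T^2 + W^{n-2} = 0$ (the latter itself being the $A_{n-3}$-type curve, irreducible when $n$ is odd and splitting into two smooth branches when $n$ is even). By the splitting of normalization, $\overline{A} \cong \overline{k[[W]]} \times \overline{k[[T,W]]/\langle T^2 + W^{n-2}\rangle}$, and for the second factor I would reuse the computation from the proof of Lemma \ref{lem Ak} (with $n$ there replaced by $n-3$), which shows it takes $\left\lfloor (n-2)/2\right\rfloor = s-1$ steps and yields conductor $\langle t, w^{s-1}\rangle$ in that factor. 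Tracking the conductor $\mathcal{C}_A = \mathcal{C}_{\overline{A}/A}$ through the product decomposition — being careful that $\mathcal{C}_A$ is the set of $a \in A$ with $a\overline{A} \subset A$, which involves the interaction between the two branches at their intersection point — should give $\mathcal{C}_A = \langle w\rangle_A \cdot \langle t, w^{s-1}\rangle_A$, i.e. the pullback to $A$ of $\langle W, W\cdot t, W \cdot w^{s-1} \rangle$; equivalently $\mathcal{C}_A = \langle T, W\rangle_A \cdot \langle T, W^{s-1}\rangle_A$ after noting $\langle T, W \rangle \supset \langle W \rangle$ modulo the relation and that $T$ lies in the conductor of the $A_{n-3}$-factor.

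Next I would transfer this to $k[[X,Y]]$ exactly as in Lemma \ref{lem Ak}: writing $T = aX+bY$, $W = cX+dY$ with $ad-bc$ a unit, one has $\langle X,Y\rangle = \langle T,W\rangle$, hence $\langle X,Y\rangle^2 = \langle T,W\rangle^2$ and $\langle X,Y\rangle^{s-1} = \langle T,W\rangle^{s-1} \subset \langle T, W^{s-1}\rangle$, and using $\langle X,Y\rangle = \langle X,T\rangle$ or $\langle T,Y\rangle$ one shows $\langle X,Y\rangle \cdot \langle X^{s-1}, T, Y^{s-1}\rangle = \langle X,Y\rangle \cdot \langle T, W^{s-1}\rangle = \langle T,W\rangle \cdot \langle T, W^{s-1}\rangle$ in $k[[X,Y]]$. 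Finally, the replacement of $T$ by its truncation $T_{s-2}$ is justified as in Lemma \ref{lem Ak}: when $s>2$ one has $\langle X,Y\rangle = \langle X, T_{s-2}\rangle$ or $\langle T_{s-2}, Y\rangle$, so $\langle X,Y\rangle^{s-1} \subset \langle X^{s-1}, T_{s-2}, Y^{s-1}\rangle$, and the factor $\langle X,Y\rangle$ out front absorbs the difference between $T$ and $T_{s-2}$. I expect the main obstacle to be the bookkeeping for the conductor of the reducible ring $A$ — in particular verifying that the two branches meet transversally enough that the conductor of the product decomposes as the product of the branch-wise conductors times the ideal cutting out the singular branch, which requires either a direct normalization-algorithm run on $A$ (as cited from \cite[Sect. 4]{BDS}) or a careful local analysis at the node-like intersection of the line $W=0$ with the $A_{n-3}$-curve; getting the exponent $s-1$ versus $s$ right in each factor, and the off-by-one between $\left\lfloor n/2 \right\rfloor$ and $\left\lfloor (n-2)/2 \right\rfloor$, is where I would be most careful.
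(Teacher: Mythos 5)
Your overall strategy mirrors the paper's proof correctly: reduce to the completion $\widehat{B}$, use the isomorphism with $A=k[[T,W]]/\langle W(T^{2}+W^{n-2})\rangle$, compute $\mathcal{C}_{A}$, transfer the answer to $(X,Y)$-coordinates, and truncate $T$ to $T_{s-2}$. The transfer step (showing $\langle X^{s-1},T,Y^{s-1}\rangle=\langle T,W^{s-1}\rangle$ via $\langle X,Y\rangle=\langle X,T\rangle$ or $\langle T,Y\rangle$) and the truncation step are both fine and consistent with the paper.

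The gap is in your computation of $\mathcal{C}_{A}$. The paper obtains $\mathcal{C}_{A}=\langle t^{2},tw,w^{s}\rangle$ by tracing the Grauert--Remmert normalization steps (citing the explicit description of $\overline{A}$ in \cite[Sect.\,4]{BDS}). You instead split $\overline{A}\cong\overline{k[[T]]}\times\overline{k[[T,W]]/\langle T^{2}+W^{n-2}\rangle}$ and claim the conductor should be the product of the singular-branch ideal with the branch-wise conductor, namely $\mathcal{C}_{A}=\langle w\rangle\cdot\langle t,w^{s-1}\rangle=\langle tw,w^{s}\rangle$. This formula is wrong, and the subsequent ``equivalently $\mathcal{C}_{A}=\langle T,W\rangle\cdot\langle T,W^{s-1}\rangle$'' is a non sequitur: these two ideals are genuinely different, since $\langle T,W\rangle\cdot\langle T,W^{s-1}\rangle=\langle t^{2},tw,w^{s}\rangle$ contains $t^{2}$ while $\langle tw,w^{s}\rangle$ does not (for example for $D_{4}$, $s=2$, one checks that $t^{2}\notin\langle tw,w^{2}\rangle$ in $A=k[[T,W]]/\langle W(T^{2}+W^{2})\rangle$, while the conductor of an ordinary triple point is $\mathfrak{m}^{2}=\langle t^{2},tw,w^{2}\rangle$). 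So you end up writing down the right answer, but your derivation does not produce it. The conductor of a reducible curve germ is not the product of a branch ideal with a branch conductor; it is governed by the branch conductors together with the pairwise intersection multiplicities (Noether's conductor formula), and making that precise is exactly the ``interaction at the intersection point'' you flag but do not carry out. To close the gap you should either run the normalization algorithm on $A$ directly (as the paper does, via \cite[Sect.\,4]{BDS}) or carry out the branch analysis quantitatively, accounting for the intersection multiplicity of $\{W=0\}$ with $\{T^{2}+W^{n-2}=0\}$, and then verify that the resulting order conditions are cut out precisely by $\langle t^{2},tw,w^{s}\rangle$.
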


\begin{proof}
We have an isomorphism%
\[
A\rightarrow\widehat{B}\text{, }q\mapsto T(x,y)\text{, }w\mapsto
W(x,y)\text{,}%
\]
where $B=\mathcal{O}_{C,P}$ and%
\[
A=k[[t,w]]=k[[T,W]]/\left\langle W\cdot\left(  T^{2}+W^{n-2}\right)
\right\rangle .
\]
This time, the normalization is
\[
\overline{A}=\sum_{i=0}^{n-2-s}k[[t]]\cdot w^{i}+\sum_{i=n-1-s}^{n-3}%
k[[t]]\cdot\frac{w^{i}}{t}+k[[t]]\cdot\frac{w^{n-2}}{t^{2}},
\]
and it takes $s=\left\lfloor \frac{n}{2}\right\rfloor $ steps to reach
$\overline{A}$ (see again \cite[Sect. 4]{BDS}). Hence,
\[
\mathcal{C}_{A}=\left\langle t^{2},tw,w^{s}\right\rangle \text{.}%
\]

Write%
\[%
\begin{tabular}
[c]{lll}%
$T=aX+bY$ &  & $W=cX+dY$%
\end{tabular}
\ \
\]
with $a,b,c,d\in k[[X,Y]]$ and such that $ad-bc$ is a unit in $k[[X,Y]]$.
Since $\left\langle X,Y\right\rangle =\left\langle T,W\right\rangle $, we have
$\left\langle XT,YT\right\rangle =\left\langle T^{2},TW\right\rangle $ and
$\left\langle X,Y\right\rangle ^{s}=\left\langle T,W\right\rangle ^{s}%
\subset\left\langle T^{2},TW,W^{s}\right\rangle $, hence%
\[
\left\langle X,Y\right\rangle \cdot\left\langle X^{s-1},T,Y^{s-1}\right\rangle
\subset\left\langle T^{2},TW,W^{s}\right\rangle \text{.}%
\]
For the other inclusion, observe that $\left\langle X,Y\right\rangle
=\left\langle X,T\right\rangle $ or $\left\langle X,Y\right\rangle
=\left\langle T,Y\right\rangle $, so it follows that $\left\langle
X,Y\right\rangle ^{s-1}\subset\left\langle X^{s-1},T,Y^{s-1}\right\rangle $,
hence%
\[
W^{s}\in\left\langle X,Y\right\rangle ^{s}\subset\left\langle X,Y\right\rangle
\cdot\left\langle X^{s-1},T,Y^{s-1}\right\rangle \text{.}%
\]

If $s>2$, then $\left\langle X,Y\right\rangle =\left\langle X,T_{s-2}%
\right\rangle $ or $\left\langle X,Y\right\rangle =\left\langle T_{s-2}%
,Y\right\rangle $, hence, for any $s$, we have $\left\langle X,Y\right\rangle
^{s-1}\subset\left\langle X^{s-1},T_{s-2},Y^{s-1}\right\rangle $. We conclude
that%
\[
\left\langle X^{s-1},T_{s-2},Y^{s-1}\right\rangle =\left\langle X^{s-1}%
,T,Y^{s-1}\right\rangle \text{.}%
\]
To summarize,%
\[
\left\langle T^{2},TW,W^{s}\right\rangle =\left\langle X,Y\right\rangle
\cdot\left\langle X^{s-1},T,Y^{s-1}\right\rangle =\left\langle
X,Y\right\rangle \cdot\left\langle X^{s-1},T_{s-2},Y^{s-1}\right\rangle
\text{,}%
\]
hence%
\[
\mathcal{C}_{\widehat{B}}=\left\langle x,y\right\rangle \cdot\left\langle
x^{s-1},T_{s-2}(x,y),y^{s-1}\right\rangle \subset\widehat{B}\text{.}%
\]
Then the claim follows as before.
\end{proof}

\begin{lemma}
Let $C$ have a singularity of type $E_{n}$, $n=6,7,8$, at $P=(0,0)$. Set
$s=\left\lfloor \frac{n-1}{2}\right\rfloor $, and let $l_{1}$ be as in Remark
\ref{rmk charact ade}. Then $\mathfrak{G}(P)$ is the homogenization of
\[
\left\langle X,\text{ }Y\right\rangle \cdot\left\langle X^{s-1},\;l_{1}%
,\;Y^{s-1}\right\rangle \subset k[X,Y]
\]
with respect to $Z$.
\end{lemma}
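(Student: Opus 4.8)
The plan is to imitate the proofs of Lemmas \ref{lem Ak} and \ref{lem D}: reduce to a local computation in the completed local ring, identify the conductor in a suitable model, and transfer back via the excellence of $B=\mathcal{O}_{C,P}$. First I would recall from Remark \ref{rmk charact ade} that, since $P$ is of type $E_n$ with $n\in\{6,7,8\}$, the lowest degree form of $f$ at $P$ is $h_P=l_1^3$ for a linear $l_1\in k[X,Y]$, and $\mu_P=n$. Hence, after a change of the regular system of parameters, we may write $f$ in one of the standard normal forms for $E_6,E_7,E_8$ (e.g.\ $T^3+W^4$, $T^3+TW^3$, $T^3+W^5$ respectively), where $T,W\in k[[X,Y]]$ form a regular system of parameters and $l_1$ agrees with $T$ up to first order (so $\langle X,Y\rangle=\langle T,W\rangle$ and, modulo $O(2)$, $T\equiv l_1$).

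Next I would run (or quote, via \cite[Sect. 4]{BDS}) the normalization algorithm on
\[
A=k[[t,w]]=k[[T,W]]/\langle f_{\mathrm{norm}}(T,W)\rangle
\]
for each of the three $E$-types, and read off that $\overline{A}$ has the shape
\[
\overline{A}=\sum_{i}k[[t]]\cdot w^{i}\ +\ \sum_{i}k[[t]]\cdot\frac{w^{i}}{t}\ +\ \sum_{i}k[[t]]\cdot\frac{w^{i}}{t^{2}},
\]
with the precise ranges of $i$ dictated by the exponents in the normal form. From this description the conductor comes out, in all three cases, as
\[
\mathcal{C}_A=\langle t^{2},\,tw,\,w^{s}\rangle_A=\langle t,w\rangle\cdot\langle t,w^{s-1}\rangle,
\]
with $s=\lfloor\frac{n-1}{2}\rangle$ (so $s=3,3,4$ for $E_6,E_7,E_8$). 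Concretely one checks: the largest ideal of $\overline A$ contained in $A$ is generated by $t^2$, $tw$, and the first power $w^s$ of $w$ that already lies in $A$; a short case analysis of the three normal forms confirms the stated value of $s$ and that no lower combination survives.

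The transfer step is then exactly as in the earlier lemmas. Writing $T=aX+bY$, $W=cX+dY$ with $a,b,c,d\in k[[X,Y]]$ and $ad-bc$ a unit, one has $\langle X,Y\rangle=\langle T,W\rangle$, hence $\langle XT,YT\rangle=\langle T^2,TW\rangle$ and $\langle X,Y\rangle^{s}\subset\langle T^2,TW,W^{s}\rangle$; together with $\langle X,Y\rangle=\langle X,T\rangle$ or $\langle X,Y\rangle=\langle T,Y\rangle$ (so $\langle X,Y\rangle^{s-1}\subset\langle X^{s-1},T,Y^{s-1}\rangle$) this gives
\[
\langle T^2,TW,W^{s}\rangle=\langle X,Y\rangle\cdot\langle X^{s-1},T,Y^{s-1}\rangle .
\]
Finally, since $T\equiv l_1\pmod{O(2)}$ we get $\langle X,Y\rangle=\langle X,l_1\rangle$ or $\langle T_{s-2},Y\rangle$ — wait, more simply $\langle X,Y\rangle=\langle X,l_1\rangle$ or $\langle l_1,Y\rangle$, so $\langle X,Y\rangle^{s-1}\subset\langle X^{s-1},l_1,Y^{s-1}\rangle$, whence $\langle X^{s-1},l_1,Y^{s-1}\rangle=\langle X^{s-1},T,Y^{s-1}\rangle$ and
\[
\mathcal{C}_{\widehat B}=\langle x,y\rangle\cdot\langle x^{s-1},l_1(x,y),y^{s-1}\rangle\subset\widehat B .
\]
Using that $B$ is excellent, so $\overline{\widehat B}=\widehat{\overline B}$ and $\mathcal{C}_{\widehat B}=\mathcal{C}_B\otimes_B\widehat B$ (Equation \eqref{equ:cond-compl}), faithful flatness of completion yields $\mathcal{C}_B=\langle x,y\rangle\cdot\langle x^{s-1},l_1(x,y),y^{s-1}\rangle_B$. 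Since the candidate ideal $\mathfrak{G}'=\langle X,Y\rangle\cdot\langle X^{s-1},l_1,Y^{s-1}\rangle$ localizes to $\mathcal{O}_{C,Q}$ at every $Q\in C\setminus\{P\}$ and to $\mathcal{C}_B$ at $P$, its homogenization is $\mathfrak{G}(P)$ by the definition of the local Gorenstein adjoint ideal (Theorem \ref{thm adjoint ideal locally conductor} and Remark \ref{rem:local-adjoint-ideal}).

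The main obstacle I anticipate is the explicit normalization computation for the three $E_n$ normal forms — in particular verifying uniformly that $s=\lfloor\frac{n-1}{2}\rfloor$ and that $\mathcal{C}_A=\langle t^2,tw,w^s\rangle$ in each case; the $E_7$ form $T^3+TW^3$ is the least symmetric and needs the most care. Everything after that is the same formal manipulation in $k[[X,Y]]$ and the same excellence/faithful-flatness argument already used twice above, so it should be routine once the local normalization data is in hand.
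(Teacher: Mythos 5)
Your proposal follows the same route as the paper: reduce to the completed local ring, invoke \cite[Sect. 4]{BDS} for the normalization of the $E_6,E_7,E_8$ normal forms $T^3+W^4$, $T(T^2+W^3)$, $T^3+W^5$, read off $\mathcal{C}_A=\langle t^2,tw,w^s\rangle$, push this back to $k[[X,Y]]$ by the same manipulations with $\langle X,Y\rangle=\langle T,W\rangle$ as in Lemma \ref{lem D}, replace $T$ by its linear truncation $l_1$ using $\langle X,Y\rangle=\langle X,l_1\rangle$ or $\langle l_1,Y\rangle$, and descend via excellence and faithful flatness of completion exactly as in the previous two lemmas. The paper's own proof is just a citation to the $D_n$ argument plus the remark that $T_{s-2}=0$ when $s=2$ and $T_{s-2}=l_1$ when $s=3$, so the approach matches.

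One concrete error: with $s=\lfloor(n-1)/2\rfloor$ you get $s=2,3,3$ for $E_6,E_7,E_8$, not $s=3,3,4$ as you wrote. This matters for the intermediate claim $\mathcal{C}_A=\langle t^2,tw,w^s\rangle$: for $E_6$ ($T^3+W^4$, parametrized by $t=\tau^4$, $w=\tau^3$) the Frobenius number of $\{3,4\}$ is $5$, so the conductor in $k[[\tau]]$ is $\tau^6 k[[\tau]]=w^2\overline A$, i.e.\ $s=2$, not $3$; your value would give the wrong ideal. Also note the paper expresses $\overline A$ as a $k[[w]]$-module, $\overline{A}=k[[w]]\cdot 1+k[[w]]\cdot\frac{t}{w}+k[[w]]\cdot\frac{t^{2}}{w^{s}}$, which is cleaner here since $t$ has the higher valuation; your $k[[t]]$-module form can be made to work but you left the ranges of $i$ unspecified and it is easier to misread. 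Neither issue changes the overall structure of the argument, which is correct.
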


\begin{proof}
Depending on $n\in\{6,7,8\}$, we have an isomorphism%
\[
A\rightarrow\widehat{B}\text{, }q\mapsto T(x,y)\text{, }w\mapsto
W(x,y)\text{,}%
\]
where $B=\mathcal{O}_{C,P}$ and%
\begin{align*}
A  &  =k[[t,w]]=k[[T,W]]/\left\langle T^{3}+W^{4}\right\rangle \text{,}\\
A  &  =k[[t,w]]=k[[T,W]]/\left\langle T\left(  T^{2}+W^{3}\right)
\right\rangle \text{,}\\
A  &  =k[[t,w]]=k[[T,W]]/\left\langle T^{3}+W^{5}\right\rangle \text{,}%
\end{align*}
respectively. In each case, by \cite[Sect. 4]{BDS},
\[
\overline{A}=k[[w]]\cdot1+k[[w]]\cdot\frac{t}{w}+k[[w]]\cdot\frac{t^{2}}%
{w^{s}}\text{,}%
\]
which implies that%
\[
\mathcal{C}_{A}=\left\langle t^{2},tw,w^{s}\right\rangle \text{.}%
\]
The same argument as in the proof of Lemma \ref{lem D} shows that
\[
\mathcal{C}_{\widehat{B}}=\left\langle x,y\right\rangle \cdot\left\langle
x^{s-1},T_{s-2}(x,y),y^{s-1}\right\rangle \subset\widehat{B}\text{,}%
\]
and the claim follows as before. Note that $T_{s-2}=0$ if $s=2$, and
$T_{s-2}=l_{1}$ if $s=3$.
\end{proof}

In principle, we could pursue a similar strategy for all singularities
classified by Arnold in \cite{Arnold}. However, in \cite{BDLS}, we give an
algorithm which, for plane curves in characteristic zero, allows us to compute
the local contributions to the normalization for a broad class of
singularities in a direct way. Combining the approach of Section
\ref{Sec General local approch} with this algorithm or with modular techniques
and normalization as described in Section \ref{sec parallel and modular}
below, we already get a very efficient algorithm for computing $\mathfrak{G}%
$.

\begin{remark}
\label{rem:alg4-and field-ext local} 
For the local analysis of the singularities, we
temporarily may have to leave $k$.
\end{remark}

\section{Parallel computation and modular
techniques\label{sec parallel and modular}}

Algorithm \ref{alg global from local} is parallel in nature since the
computations of the local adjoint ideals do not depend on each other. In this
section, in the case where the given curve is defined over $\mathbb{Q}$,
we describe a modular way of parallelizing Algorithm
\ref{alg global from local} even further. One possible approach is to replace
the computations of the Gr\"{o}bner bases involved, the computation of the
(minimal) associated primes in the singular locus, and the computations
yielding the normalizations by their modular variants as introduced by
\cite{ArnoldE}, \cite{IPS}, and \cite{BDLPSS}. These variants are either
probabilistic or require expensive tests to verify the results at the end. In
order to reduce the number and complexity of the verification tests, we
provide a direct modularization for the adjoint ideal algorithm. The approach
we propose requires only the verification of the final result: We give
efficient conditions for checking whether the result obtained is indeed the
Gorenstein adjoint ideal.

Our approach relies on the general scheme for modular computations presented
in \cite{FareyPaper}. This scheme is based on error tolerant rational
reconstruction (see Remark \ref{rmk error tolerant farey} below) and can
handle \emph{bad primes}\footnote{In our context, a prime $p$ is \emph{bad} if
Algorithm \ref{alg global from local} applied to the modulo $p$ values of the
input over the rationals does not return the reduction of the characteristic
zero result.} of various types, provided there are only finitely many such
primes. Referring to \cite{FareyPaper} for details, we will now outline the
main ideas behind the scheme.

Fix a global monomial ordering $>$ on the monoid of monomials in the variables
$X=\{X_{0},\ldots,X_{r}\}$. Consider the polynomial rings $R={\mathbb{Q}}[X]$
and, given an integer $N\geq2$, $R_{N}=(\mathbb{Z}/N\mathbb{Z})[X]$. If
$H\subset R$ or $H\subset R_{N}$ is a Gr\"obner basis, then denote by
$\operatorname{LM}(H):=\{\operatorname{LM}(f)\mid f\in H\}$ its set of leading monomials.

If $\frac{a}{b}\in\mathbb{Q}$ with $\gcd(a,b)=1$ and $\gcd(b,N)=1$, set
$\left(  \frac{a}{b}\right)  _{N}:=(a+N\mathbb{Z})(b+N\mathbb{Z})^{-1}%
\in\mathbb{Z}/N\mathbb{Z}$. If $f\in R$ is a polynomial such that $N$ is
coprime to any denominator of a coefficient of $f$, then its \emph{reduction
modulo $N$} is the polynomial $f_{N}\in R_{N}$ obtained by mapping each
coefficient $x$ of $f$ to $x_{N}$. If $H=\{h_{1},\dots,h_{t}\}\subset R$ is a
Gr\"{o}bner basis such that $N$ is coprime to any denominator in any $h_{i}$,
set $H_{N}=\{(h_{1})_{N},\dots,(h_{t})_{N}\}$. If $J\subset R$ is an ideal, we
write
\[
J_{0}=J\cap\mathbb{Z}[X]\;\text{ and }\;J_{N}=\left\langle f_{N}\mid f\in
J_{0}\right\rangle \subset R_{N}\text{,}%
\]
and call $J_{N}$ \emph{the reduction of $J$ modulo $N$}. We also write
$(R/J)_{N}=R_{N}/J_{N}$.

Based on this notation, we fix the following setup for the rest of this section:

\begin{notation}
\label{not: chapter-mod} Let $\Gamma\subset\mathbb{P}_{\mathbb{Q}}^{r}$ be a
curve of degree $n$. As before, suppose that $\Gamma$ is integral and
non-degenerate. Denote by $I(\Gamma)$ the ideal of $\Gamma$ in $R$, and by
$G(0)\subset R$ the reduced Gr\"{o}bner basis of $\mathfrak{G}(\Gamma)$. If
$p$ is a prime such that $I(\Gamma)_{p}$ is radical and defines an
integral, non-degenerate curve in $\mathbb{P}_{\mathbb{F}_{p}}^{r}$, then
write $\Gamma_{p}$ for this curve and $G(p)\subset R_{p}$ for the reduced
Gr\"{o}bner basis of $\mathfrak{G}(\Gamma_{p})$.
\end{notation}

\begin{remark}
\label{rem:reduction-practical} Given $p$, the ideal $I(\Gamma)_{p}$ can be
found using Gr\"{o}bner bases over $\mathbb Z$ (see \cite[Cor. 4.4.5]{AL} and
\cite[Lem. 6.1]{ArnoldE}). 
We will make use of this in the final verification test.
With regard to the other steps of our algorithm (in particular, in a randomized version
of the algorithm obtained by omitting the verification test), we can
proceed in the following, more efficient way: Let $\{f_{1},...,f_{r}\}$ be the
reduced Gr\"{o}bner basis of $I(\Gamma)$. Reject $p$ if one of the
$(f_{i})_{p}$ is not defined (there are only finitely many such primes $p$).
Otherwise, realize $I(\Gamma)_{p}$ via the equality
\begin{equation}
I(\Gamma)_{p}=\left\langle (f_{1})_{p},...,(f_{r})_{p}\right\rangle \subset
R_{p}, \label{eq:reduction-practical}%
\end{equation}
which holds true for all but finitely many primes $p$. These finitely many bad
primes will not influence the lift if we apply error tolerant rational
reconstruction as described in Remark \ref{rmk error tolerant farey} below.
\end{remark}

\begin{remark}
\label{rem:reduction-prop} There are only finitely many primes $p$ for which
the desired conditions on $I(\Gamma)_{p}$ in Notation \ref{not: chapter-mod}
are not satisfied. Since these conditions can be checked using polynomial
factorization and Gr\"{o}bner bases, we may simply reject such a bad prime if
we encounter it in our modular algorithm. Hence, we will ignore these bad
primes in the following discussion. In particular, we will assume that the
Gr\"{o}bner bases $G(p)$ exists for all primes $p$.
\end{remark}

The basic idea of the modular adjoint ideal algorithm can then be described as
follows: First, choose a set of primes $\mathcal{P}$ and compute $G(p)$ for
each $p\in\mathcal{P}$. Second, lift the $G(p)$ coefficientwise to a set of
polynomials $G\subset R$. Provided that $\mathfrak{G}(\Gamma)_{p}%
=\mathfrak{G}(\Gamma_{p})$ for each $p\in\mathcal{P}$, we then expect that $G$
is a Gr\"{o}bner basis which coincides with our target Gr\"{o}bner basis
$G(0)$.

The lifting process consists of two steps. First, use Chinese remaindering to
lift the $G(p)\subset R_{p}$ to a set of polynomials $G(N)\subset R_{N}$, with
$N:=\prod_{p\in\mathcal{P}}p$\ . Second, compute a set of polynomials
$G\subset R$ by lifting the coefficients occurring in $G(N)$ to rational
coefficients. Here, to identify Gr\"obner basis elements corresponding to each
other, we require that $\operatorname{LM}(G(p)) =\operatorname{LM}(G(q))$ for
all $p,q\in\mathcal{P}$. This leads to condition (L2) in the definition below:

\begin{definition}
\label{defnLucky}With notation as above, a prime $p$ is called \emph{lucky} if:

\begin{enumerate}
\item[(L1)] $\mathfrak{G}(\Gamma)_{p}=\mathfrak{G}(\Gamma_{p})$ and

\item[(L2)] $\operatorname{LM}(G(0))=\operatorname{LM}(G(p))$.
\end{enumerate}

Otherwise $p$ is called \emph{unlucky}.
\end{definition}

\begin{lemma}
\label{lem:lucky-primes-finite} All but finitely many primes are lucky.
\end{lemma}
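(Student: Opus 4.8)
The plan is to show that each of the two conditions (L1) and (L2) in Definition \ref{defnLucky} fails only for finitely many primes, treating them separately.

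For condition (L2), the standard argument goes as follows. First I would fix, once and for all, the reduced Gr\"obner basis $G(0)$ of $\mathfrak{G}(\Gamma)$ and a set of polynomials in $\ZZ[X]$ that witnesses, modulo clearing denominators, that the $S$-polynomials of elements of $G(0)$ reduce to zero and that the generators of $\mathfrak{G}(\Gamma)$ (or of $I(\Gamma)$ together with the conductor generators) lie in $\langle G(0)\rangle$. For all primes $p$ not dividing any of the finitely many numerators/denominators appearing in these standard representations, $G(0)_p$ is still a Gr\"obner basis with $\operatorname{LM}(G(0)_p) = \operatorname{LM}(G(0))$ and generates $\mathfrak{G}(\Gamma)_p$. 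Hence for such $p$ we have $\operatorname{LM}(G(0)) = \operatorname{LM}(G(0)_p)$, and if in addition $p$ satisfies (L1), then $\mathfrak{G}(\Gamma_p) = \mathfrak{G}(\Gamma)_p$ has a Gr\"obner basis with this same leading-monomial set; since the reduced Gr\"obner basis of an ideal is unique and its leading monomial set depends only on the ideal and the ordering, $\operatorname{LM}(G(p)) = \operatorname{LM}(G(0))$, i.e.\ (L2) holds. So (L2) can fail for at most the finitely many primes ruled out above, \emph{together} with those for which (L1) fails; it therefore suffices to bound the latter.

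For condition (L1), I would invoke the semicontinuity/flatness behavior of the constructions involved. The ideal $\mathfrak{G}(\Gamma)$ is, by Proposition \ref{Cor adjoint ideal from conductor}, (the homogenization of) the conductor $\mathcal{C}_{k[C]} = \langle d\rangle_{k[C]} : U$ obtained from a presentation $\overline{k[C]} = \frac{1}{d}U$ of the normalization (Proposition \ref{prop adjoint ideal and conductor}). The normalization algorithm of \cite{GLS}, the computation of the singular locus, the radical computations, the ideal quotient, the intersection over the singular points, and the homogenization are all built from finitely many Gr\"obner basis computations and ideal-theoretic operations over $\QQ$. Each such operation commutes with reduction modulo $p$ for all but finitely many $p$: this is exactly the content of the results cited in Section \ref{sec parallel and modular} — \cite{ArnoldE}, \cite{IPS}, \cite{BDLPSS} — which establish that Gr\"obner bases, (minimal) associated primes in the relevant components, and normalizations reduce correctly modulo all but finitely many primes. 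Chaining these finitely many "bad prime" exclusions together, there remains only a finite set of primes $p$ for which some step of the computation of $\mathfrak{G}(\Gamma)$ does not reduce correctly; for every other prime, running the whole pipeline modulo $p$ produces $\mathfrak{G}(\Gamma)_p$, and by correctness of the algorithm (Algorithm \ref{alg global from local}, justified in Sections \ref{Sec Global from local} and \ref{Sec General local approch}) this is precisely $\mathfrak{G}(\Gamma_p)$. Thus (L1) holds for all but finitely many $p$.

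Combining the two parts, the set of unlucky primes is contained in the union of the finitely many primes excluded in the (L2) argument and the finitely many primes excluded in the (L1) argument, hence is finite. The main obstacle is the (L1) part: one must be careful that \emph{every} ingredient of the local-to-global algorithm — in particular the normalization step and the decomposition of the singular locus into the sets treated simultaneously — is known to commute with reduction modulo $p$ outside a finite set, and this is where one leans on the previously established modularization results for normalization \cite{BDLPSS} and for primary/associated-prime computations \cite{IPS}, rather than reproving them.
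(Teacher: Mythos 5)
Your proposal is correct and follows essentially the same two‑step strategy as the paper's proof: first reduce (L2) to (L1), then show (L1) fails only for finitely many primes. For the (L2) reduction the paper simply cites \cite[Lemma 5.5]{FareyPaper}, which encapsulates the explicit denominator/leading-monomial argument you spell out. For (L1) both you and the paper rely on the result from \cite{BDLPSS} that $(\overline{A})_{p}=\overline{A_{p}}$ for all but finitely many primes, and on the fact that the subsequent ideal-theoretic operations are Gr\"obner basis computations that likewise reduce correctly modulo almost all primes.

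One point worth noting: the paper's (L1) argument is leaner than what you sketch. It uses only the \emph{global} characterization of Propositions \ref{Cor adjoint ideal from conductor} and \ref{prop adjoint ideal and conductor}, namely that $\mathfrak{G}$ is the homogenization of the single ideal quotient $\langle d\rangle_{k[C]}:U$ for a presentation $\overline{k[C]}=\frac{1}{d}U$. This sidesteps the need to verify that the singular-locus decomposition, the radical computations, the per-point intersections, and the other steps of the local-to-global Algorithm \ref{alg global from local} each commute with reduction modulo $p$. Since (L1) is a statement about the ideal $\mathfrak{G}(\Gamma_{p})$ itself and not about the algorithm used to compute it, one is free to use whatever characterization is most convenient, and the global $(d:U)$ description makes the argument as short as possible. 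The "main obstacle" you flag at the end is therefore not actually an obstacle for the lemma as stated.
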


\begin{proof}
As is clear from the proof of \cite[Lemma 5.5]{FareyPaper}, it is enough to
show that condition (L1) is true for all but finitely many primes.
For this, we may assume that both $\Gamma$ and $\Gamma_{p}$ do not have any
singularities at $X_{0}=0$. Let $C$ be the affine part of $\Gamma$. Write
$A={\mathbb{Q}}[X_{1},...,X_{r}]/I(C)$. As shown in \cite{BDLPSS},
$(\overline{A})_{p}=\overline{A_{p}}$ for all but finitely many primes $p$. So
if we write $\overline{A}=\frac{1}{d}U$, with an ideal $U\subset A$ and an
element $0\neq d\in A$, and $\overline{A_{p}}=\frac{1}{d(p)} U(p)$, with
$U(p)\subset A_{p}$ and $d_{p}\in A_{p}$, then%
\[
(d_{p}:U_{p})=(d(p):U(p))
\]
for all but finitely many primes $p$. Computing an ideal quotient amounts to a
Gr\"{o}bner basis computation. Hence, as pointed out in \cite[Remark
5.3]{FareyPaper},
\[
(d:U)_{p}=(d_{p}:U_{p})
\]
for all but finitely many primes $p$. The result follows, thus, from
Propositions \ref{Cor adjoint ideal from conductor} and \ref{prop adjoint ideal and conductor}.
\end{proof}

When performing our modular algorithm, condition (L1) can only be checked a
posteriori: We compute $G(p)$ and, thus, $\mathfrak{G}(\Gamma_{p})$ on our
way, but $\mathfrak{G}(\Gamma)_{p}$ is only known to us after $G(0)$ and,
thus, $\mathfrak{G}(\Gamma)$ has been computed. This is not a problem,
however, since the finitely many primes where $\mathfrak{G}(\Gamma)_{p}%
\not =\mathfrak{G}(\Gamma_{p})$ will not influence the final result if we
apply error tolerant rational reconstruction and the set $\mathcal{P}$ is
large enough:

\begin{remark}
\label{rmk error tolerant farey} Let $N^{\prime}$ and $M$ be integers with
$\gcd(N^{\prime},M)=1$, let $N=N^{\prime} \cdot M$, and let $\frac{a}{b}%
\in\mathbb{Q}$ with $\gcd(a,b)=\gcd(N^{\prime},b)=1$. Set $r_{1}:=\left(
\frac{a}{b}\right)  _{N^{\prime}}\in\mathbb{Z}/N^{\prime}\mathbb{Z}$, let
$r_{2}\in\mathbb{Z}/M\mathbb{Z}$ be arbitrary, and denote by $r$ the image of
$(r_{1},r_{2})$ under the isomorphism%
\[
\mathbb{Z}/N^{\prime}\mathbb{Z}\times\mathbb{Z}/M\mathbb{Z}\rightarrow
\mathbb{Z}/N\mathbb{Z}\text{.}%
\]
Lifting $r$ to a rational number by Gaussian reduction, starting from
$(a_{0},b_{0})=(N^{\prime}M,0)$ and $(a_{1},b_{1})=(r,1)$, we create the
sequence $(a_{i},b_{i})$ obtained by
\[
(a_{i+2},b_{i+2})=(a_{i},b_{i})-q_{i}(a_{i+1},b_{i+1})\text{,}%
\]
with
\[
q_{i}=\left\lfloor \frac{\langle(a_{i},b_{i}), (a_{i+1},b_{i+1})\rangle}%
{\Vert(a_{i+1}, b_{i+1}) \Vert^{2}} \right\rceil .
\]
Computing this sequence until ${\Vert(a_{i+2}, b_{i+2})\Vert}\geq
{\Vert(a_{i+1}, b_{i+1}) \Vert}$, we return \texttt{false} if ${\Vert(a_{i+1},
b_{i+1})\Vert^{2}} \geq N$, and $\frac{a_{i+1}}{b_{+1}}$, otherwise. By
\cite[Lemma 4.3]{FareyPaper}, this algorithm will return $\frac{a_{i+1}%
}{b_{+1}}=\frac{a}{b}$, provided that $N$ is large enough and $M\ll N^{\prime
}$. More precisely, we ask that $N^{\prime}>(a^{2}+b^{2})\cdot M$.
\end{remark}

\begin{definition}
If $\mathcal{P}$ is a finite set of primes, set
\[
N^{\prime}=\prod_{p\in\mathcal{P}\text{ lucky}}p\hspace{0.5cm}\text{and}%
\hspace{0.5cm}M=\prod_{p\in\mathcal{P}\text{ unlucky}}p\text{.}%
\]
Then $\mathcal{P}$ is called \emph{sufficiently large} if
\[
N^{\prime}>(a^{2}+b^{2})\cdot M
\]
for all coefficients $\frac{a}{b}$ of polynomials in $G(0)$ (assume
$\mathop{gcd}(a,b)=1$).
\end{definition}

\begin{lemma}
\label{lem sufficiently general} If $\mathcal{P}$ is a sufficiently large set
of primes satisfying condition (L2), then the reduced Gr\"{o}bner bases
$G(p)$, $p\in\mathcal{P}$, lift to the reduced Gr\"{o}bner basis $G(0)$.
\end{lemma}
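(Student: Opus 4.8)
The plan is to reduce the statement to the error tolerant rational reconstruction lemma quoted in Remark \ref{rmk error tolerant farey} (\cite[Lemma 4.3]{FareyPaper}), applied coefficient by coefficient, exactly as in the general modular scheme of \cite{FareyPaper}. First I would fix notation: since every $p\in\mathcal{P}$ satisfies (L2), the sets $G(p)$ and $G(0)$ share the same leading monomials, so I can match each element $g_{i}^{(p)}\in G(p)$ with the element $g_{i}\in G(0)=\{g_{1},\dots,g_{t}\}$ having the same leading monomial (distinct within each $G(p)$ by minimality of a reduced Gr\"obner basis), and the Chinese remaindering step combines, coefficientwise, the $g_{i}^{(p)}$, $p\in\mathcal{P}$, into polynomials $g_{i}^{(N)}\in R_{N}$, where $N=\prod_{p\in\mathcal{P}}p$. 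It then suffices to show that error tolerant reconstruction turns each coefficient of each $g_{i}^{(N)}$ into the corresponding coefficient of $g_{i}$.

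The key intermediate claim — and the step I expect to be the main obstacle — is that for every \emph{lucky} prime $p\in\mathcal{P}$ one has $G(0)_{p}=G(p)$, i.e. the coefficientwise reduction of $G(0)$ modulo $p$ is exactly the reduced Gr\"obner basis $G(p)$ of $\mathfrak{G}(\Gamma_{p})$. To prove this I would argue, as is standard for modular Gr\"obner basis computations (cf. \cite{ArnoldE}, \cite{IPS}), that because $G(0)$ is the \emph{reduced} Gr\"obner basis of $\mathfrak{G}(\Gamma)$ — hence has leading coefficient $1$ in each element — for all but finitely many $p$ the standard representations of the $S$-polynomials of $G(0)$, as well as the cofactors arising when dividing an element of $\mathfrak{G}(\Gamma)\cap\mathbb{Z}[X]$ by $G(0)$, involve only denominators coprime to $p$; reducing these identities modulo $p$ shows that $G(0)_{p}$ is defined, is the reduced Gr\"obner basis of $\langle G(0)_{p}\rangle\subset R_{p}$, has the same leading monomials as $G(0)$, and generates $\mathfrak{G}(\Gamma)_{p}$. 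For a lucky prime, property (L1) identifies $\mathfrak{G}(\Gamma)_{p}$ with $\mathfrak{G}(\Gamma_{p})$, so uniqueness of reduced Gr\"obner bases forces $G(0)_{p}=G(p)$, whence $(g_{i})_{p}=g_{i}^{(p)}$ for all $i$. I would discard from $\mathcal{P}$ at the outset the finitely many primes violating these ``all but finitely many'' provisos (in particular any prime dividing a denominator of a coefficient of $G(0)$), which does not affect the statement.

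Finally I would assemble the pieces. Fix $i$ and a monomial, and let $\tfrac{a}{b}\in\mathbb{Q}$, $\gcd(a,b)=1$, be the coefficient of that monomial in $g_{i}$; then $p\nmid b$ for all $p\in\mathcal{P}$. Put $N'=\prod_{p\in\mathcal{P}\text{ lucky}}p$ and $M=\prod_{p\in\mathcal{P}\text{ unlucky}}p$, so $N=N'M$ and $\gcd(N',b)=1$. By the claim, the coefficient of that monomial in $g_{i}^{(p)}$ equals $\big(\tfrac{a}{b}\big)_{p}$ for lucky $p$, while for unlucky $p$ it is an element of $\mathbb{Z}/p\mathbb{Z}$ about which we assume nothing; hence its Chinese remainder $r\in\mathbb{Z}/N\mathbb{Z}$ is the image of $\big(\big(\tfrac{a}{b}\big)_{N'},r_{2}\big)$ under the isomorphism $\mathbb{Z}/N'\mathbb{Z}\times\mathbb{Z}/M\mathbb{Z}\cong\mathbb{Z}/N\mathbb{Z}$, for some $r_{2}\in\mathbb{Z}/M\mathbb{Z}$. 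Since $\mathcal{P}$ is sufficiently large, $N'>(a^{2}+b^{2})M$, so \cite[Lemma 4.3]{FareyPaper} (see Remark \ref{rmk error tolerant farey}) guarantees that the reconstruction of $r$ returns $\tfrac{a}{b}$. As this applies to every coefficient of every $g_{i}$ (for the leading monomial it returns the coefficient $1$), the lift of the $G(p)$, $p\in\mathcal{P}$, is precisely $G(0)$. The one genuinely delicate point is the interplay of reduction modulo $p$ with Gr\"obner basis computation in the key claim; the rest is bookkeeping within the framework of \cite{FareyPaper}.
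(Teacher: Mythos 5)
The paper itself does not give an argument for this lemma; it simply cites \cite[Lemma~5.6]{FareyPaper}, so there is no internal proof to compare against. Your overall strategy — match basis elements via leading monomials, reduce to coefficientwise Chinese remaindering plus error-tolerant rational reconstruction, and invoke \cite[Lemma~4.3]{FareyPaper} with $N'$ the product of the lucky primes and $M$ the product of the unlucky ones — is indeed the structure of the argument behind the cited lemma.

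However, there is a genuine gap at exactly the step you yourself flag as delicate. You need the implication \emph{``$p$ lucky $\Rightarrow G(0)_p$ is defined and equals $G(p)$''} for \emph{every} lucky $p\in\mathcal{P}$, because only then do the lucky primes contribute correct residues, so that the bound $N'>(a^2+b^2)\cdot M$ from the definition of ``sufficiently large'' is the bound that error-tolerant reconstruction actually requires. What your argument establishes is the weaker statement that this implication holds for all but finitely many $p$ (namely those satisfying your ``provisos'' about denominators in a chosen set of $S$-polynomial representations and division cofactors). The proposed fix, ``discard from $\mathcal{P}$ the finitely many primes violating the provisos,'' does not close the gap: the set $\mathcal{P}$ is given in the hypothesis, and removing a prime from $\mathcal{P}$ changes $N'$ (if that prime was lucky) and hence can destroy the inequality $N'>(a^2+b^2)\cdot M$; moreover you have not shown that a lucky prime cannot violate the provisos. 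A lucky prime that fails your ad hoc provisos — which depend on a specific choice of $S$-polynomial representation and hence are not intrinsic — would be counted in $N'$ yet might contribute a wrong residue, and nothing in your argument rules this out. The content of \cite[Lemma~5.6]{FareyPaper} (and of Arnold's analysis in \cite{ArnoldE}) is precisely the sharper statement that conditions (L1) and (L2) \emph{by themselves}, with no residual ``finitely many bad primes'' caveat, already force $p$ to be coprime to every denominator of $G(0)$ and to yield $G(0)_p=G(p)$; you cite this literature but replace it with a cofiniteness argument that does not deliver the required universal statement about lucky primes. To repair the proof you would need to derive ``lucky $\Rightarrow$ $G(0)_p=G(p)$'' directly from the equality of leading-monomial sets and of the ideals $\mathfrak{G}(\Gamma)_p=\mathfrak{G}(\Gamma_p)$, rather than from a non-canonical finite list of exceptional primes.
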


\begin{proof}
See \cite[Lemma 5.6]{FareyPaper}.
\end{proof}

From a theoretical point of view, Lemma \ref{lem:lucky-primes-finite}
guarantees that a sufficiently large set $\mathcal{P}$ of primes satisfying
condition (L2) exists. From a practical point of view, however, (L2) can only
be checked a posteriori. 
Nevertheless, in order to be able to identify Gr\"{o}bner basis elements
in the lifting process, we have to restrict to a set of primes $p$ which all
have the same associated set of lead monomials $\operatorname{LM}(G(p))$. 
Hence, taking Lemma \ref{lem:lucky-primes-finite} into account, we proceed along the
following lines: First, fix an integer $t\geq1$ and choose a set of $t$ primes
$\mathcal{P}$ at random. Second, compute $\mathcal{GP}=\{G(p)\mid
p\in\mathcal{P}\}$ and use a majority vote with respect to (L2):\vspace{0.2cm}

\emph{\textsc{deleteByMajorityVote:} Define an equivalence relation on
$\mathcal{P}$ by setting $p\sim q:\Longleftrightarrow\operatorname{LM}%
(G(p))=\operatorname{LM}(G(q)).$ Then replace $\mathcal{P}$ by the equivalence
class of largest cardinality,\footnote{We have to use a weighted cardinality
count: when enlarging $\mathcal{P}$, the total weight of the elements already
present must be strictly smaller than the total weight of the new elements.
Otherwise, though highly unlikely in practical terms, it may happen that only
unlucky primes are accumulated.} and change $\mathcal{GP}$ accordingly.}%
\vspace{0.2cm}

\noindent Now, all $G(p)$, $p\in\mathcal{P}$, have the same set of leading
monomials. Hence, we can apply the rational reconstruction algorithm to the
coefficients of the Gr\"{o}bner bases in $\mathcal{GP}$. If this algorithm
returns \texttt{false} at some point, we enlarge the set $\mathcal{P}$ by $t$
primes not used so far, and repeat the whole process. Otherwise, the lifting
yields a set of polynomials $G\subset R$. Furthermore, if $\mathcal{P}$ is
sufficiently large, all primes in $\mathcal{P}$ satisfy condition (L2). Since
we cannot check, however, whether $\mathcal{P}$ is sufficiently large, a final
verification step is needed. Since this may be expensive, especially if $G\neq
G(0)$, we first perform a test in positive characteristic:

\emph{\textsc{pTest:} Randomly choose a prime $p\notin\mathcal{P}$ which does
not divide the numerator or denominator of any coefficient occurring in a
polynomial in $G$. Return \texttt{true} if $G_{p}=G(p)$, and \texttt{false}
otherwise.}

\vspace{0.2cm}

If \textsc{pTest} returns \texttt{false}, then $\mathcal{P}$ is not
sufficiently large (or the extra prime chosen in \textsc{pTest} is bad). In
this case, we enlarge $\mathcal{P}$ as above and repeat the process. If
\textsc{pTest} returns \texttt{true}, however, then most likely $G=G(0)$. In
this case, we verify the result over the rationals as described below. If the
verification fails, we again enlarge $\mathcal{P}$ and repeat the process.

We now discuss the verification. We write $I=\langle G\rangle_{R}$ for the
lifted modular result and $\mathfrak{G}=\mathfrak{G}(\Gamma)\subset R$ for the
correct result. After checking that $G$ is indeed a Gr\"{o}bner basis
and $I$ is saturated (henceforth, this will be assumed), we apply the following results.

\begin{lemma}
\label{lem verif} With notation as above, the ideal $I$ is equal to the
Gorenstein adjoint ideal $\mathfrak{G}$ of $\Gamma$ iff

\begin{enumerate}
\item $I(\Gamma)\subsetneqq I$,

\item $\deg\Delta(I)=\deg I+\delta(\Gamma)$, and

\item $\deg I=\deg\mathfrak{G}$.
\end{enumerate}
\end{lemma}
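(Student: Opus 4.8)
The strategy is to reduce the "if and only if" to a comparison of two ideals that are already known to have the same localizations away from the singular locus, and then to upgrade a containment to an equality by counting degrees. First I would establish the "only if" direction, which is essentially a bookkeeping exercise: if $I = \mathfrak{G}$, then condition (1) holds because $\mathfrak{G}$ properly contains $I(\Gamma)$ by construction (Theorem \ref{thm adjoint ideal locally conductor}), condition (2) holds because $\mathfrak{G}$ is an adjoint ideal and Lemma \ref{lem deg double point divisor} characterizes adjoint ideals precisely by the equality $\deg\Delta(I)=\deg I+\delta(\Gamma)$, and condition (3) is a tautology. So the real content is the converse.

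For the "if" direction, assume (1), (2), (3). By (1) and (2), Lemma \ref{lem deg double point divisor} tells us that $I$ is an adjoint ideal of $\Gamma$. By Theorem \ref{thm adjoint ideal locally conductor}, $\mathfrak{G}$ is the unique largest adjoint ideal, so $I \subset \mathfrak{G}$. Now both $I$ and $\mathfrak{G}$ are saturated homogeneous ideals (saturatedness of $I$ is assumed in the surrounding text; that of $\mathfrak{G}$ is part of Theorem \ref{thm adjoint ideal locally conductor}) with $\operatorname{Proj}(S/\mathfrak{G})$ supported on $\operatorname{Sing}(\Gamma)$, a finite set, so $S/\mathfrak{G}$ and $S/I$ both have Hilbert polynomials of degree $\leq 1$; in fact, since $I \subset \mathfrak{G} \subsetneq S$ and the support is zero-dimensional... wait — here one must be slightly careful, since $\operatorname{Proj}(S/I)$ a priori could have larger support than $\operatorname{Sing}(\Gamma)$. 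I would argue: from $I \subset \mathfrak G$ we get a surjection $S/I \twoheadrightarrow S/\mathfrak G$, hence $\deg I \geq \deg \mathfrak G$ once we know the relevant Hilbert polynomials have the same degree. By Corollary \ref{cor supp1} applied to the adjoint ideal $I$, the support of $\operatorname{Proj}(S/I)$ contains $\operatorname{Sing}(\Gamma)$, and since $I$ is adjoint, $\operatorname{Proj}(S/I)$ has dimension zero (its support is contained in $\operatorname{Sing}(\Gamma)$ — this follows because $I_P = I_P\overline{\mathcal O_{\Gamma,P}}$ forces $I_P$ to be a conductor-type ideal, hence $V(I)\subset V(\mathcal C_\Gamma)=\operatorname{Sing}(\Gamma)$). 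Thus $\deg I$ and $\deg \mathfrak G$ are both the length of $\operatorname{Proj}(S/I)$, resp. $\operatorname{Proj}(S/\mathfrak G)$, i.e. the degree of a zero-dimensional scheme.

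The final step is the length comparison. The inclusion $I \subset \mathfrak{G}$ induces, for $m \gg 0$, surjections $S_m/I_m \twoheadrightarrow S_m/\mathfrak{G}_m$, so $\dim_k(S_m/I_m) \geq \dim_k(S_m/\mathfrak{G}_m)$, i.e. $\deg I \geq \deg \mathfrak{G}$. Combined with hypothesis (3), $\deg I = \deg \mathfrak{G}$, which forces these surjections to be isomorphisms for all $m \gg 0$, hence $I_m = \mathfrak{G}_m$ for $m \gg 0$. Since both ideals are saturated, this gives $I = \mathfrak{G}$. The main obstacle, and the point requiring care, is the bookkeeping around supports and Hilbert polynomial degrees: one needs that $\operatorname{Proj}(S/I)$ is genuinely zero-dimensional so that $\deg I$ really measures a length rather than being contaminated by a one-dimensional component; this is where one leans on the fact (established in the proof of Theorem \ref{thm locally extended} and Corollary \ref{cor supp1}) that an adjoint ideal is supported exactly on $\operatorname{Sing}(\Gamma)$, together with saturatedness to pin the ideal down from its truncation in large degrees.
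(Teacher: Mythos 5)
Your overall structure matches the paper's proof exactly: the forward direction is bookkeeping, and for the converse, conditions (1) and (2) give via Lemma~\ref{lem deg double point divisor} that $I$ is an adjoint ideal, hence $I\subset\mathfrak{G}$ by Theorem~\ref{thm adjoint ideal locally conductor}, and then (3) together with saturation forces equality. The paper's proof is precisely this, only terser.

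One justification you give along the way is incorrect, though it does not derail the argument. You assert that an adjoint ideal $I$ has $\operatorname{Proj}(S/I)$ supported \emph{inside} $\operatorname{Sing}(\Gamma)$, arguing that the condition $I_P = I_P\overline{\mathcal{O}_{\Gamma,P}}$ forces $V(I)\subset V(\mathcal{C}_\Gamma)$. This is false: Theorem~\ref{thm locally extended} only constrains $I$ at singular points. If $Q$ is a smooth point of $\Gamma$ and $I$ is adjoint, then $I\cap\mathfrak{m}_Q$ is still adjoint (at every $P\in\operatorname{Sing}(\Gamma)$ the localization is unchanged, and at $Q$ the condition is automatic since $\overline{\mathcal{O}_{\Gamma,Q}}=\mathcal{O}_{\Gamma,Q}$), yet it has $Q$ in its support. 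The statement of Theorem~\ref{thm adjoint ideal locally conductor} that the support lies in $\operatorname{Sing}(\Gamma)$ is specific to $\mathfrak{G}$, not a property of arbitrary adjoint ideals. Fortunately your conclusion that $\operatorname{Proj}(S/I)$ is zero-dimensional does not need this: condition (1) alone gives $I(\Gamma)\subsetneqq I$, and since $\Gamma$ is an integral curve, any proper closed subscheme $\operatorname{Proj}(S/I)\subsetneqq\Gamma$ is automatically zero-dimensional. With that one-line replacement your argument is sound and coincides with the paper's.
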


\begin{proof}
If $I=\mathfrak{G}$, then $I$ satisfies (1), (2), and (3). Conversely, by
Lemma \ref{lem deg double point divisor}, conditions $(1)$ and $(2)$ imply
that $I$ is an adjoint ideal of $\Gamma$. In this case, since $\mathfrak{G}$
is the largest such ideal, we have $I\subset\mathfrak{G}$. But then
$I=\mathfrak{G}$ by (3).
\end{proof}

It is clear how to check condition (1). In what follows, we describe a method
for checking (2) which, in particular, provides a way of finding $\deg
\Delta(\mathfrak{G})$. This will allow us to check (3) via the formula
$\deg\mathfrak{G}=\deg\Delta(\mathfrak{G})-\delta(\Gamma)$.

If ${k}$ is any field, and $A$ is any reduced Noetherian ${k}%
$-algebra, the \emph{delta invariant} of $A$ is defined to be
\[
\delta_{{k}}(A)=\dim_{{k}} \overline{A}/A.
\]

\begin{proposition}
\label{thm delta halbstetig} Let $B$ be a ring, and let $A$ be a $B$-algebra
with the following properties:

\begin{enumerate}
\item $\left(  B, \mathfrak{m}\right)  $ is a normal local ring with perfect
residue class field ${k}$.

\item $B\rightarrow\widehat{B}$ is flat, and for all $\mathfrak{p}%
\in\operatorname{Spec}(B)$ such that $\mathfrak{p}\widehat{B}\neq\widehat{B}$,
the ring $\widehat{B}\otimes B_{\mathfrak{p}}/\mathfrak{p }B_{\mathfrak{p}}$
is geometrically normal.

\item $A$ is a formally equidimensional Nagata ring.

\item $A$ is a flat $B$-algebra, $\mathfrak{m}A$ is contained in every maximal
ideal of $A$, $A/\mathfrak{m}A$ is reduced, and $\delta_{{k}%
}(A/\mathfrak{m}A)<\infty$.

\item $\overline{A}/A$ is a finite $B$-module.

\item The unique map $\overline{A}/\mathfrak{m }\overline{A}\rightarrow
\overline{A/\mathfrak{m}A}$ factorizing the normalization map ${A}/\mathfrak{m
}{A}\rightarrow\overline{A/\mathfrak{m}A}$ as
\[
{A}/\mathfrak{m }{A}\rightarrow\overline{A}/\mathfrak{m }\overline
{A}\rightarrow\overline{A/\mathfrak{m}A}
\]
is injective.
\end{enumerate}

Then
\[
\delta_{\operatorname{Q}(B)}(A\otimes_{B}\operatorname{Q}(B))\leq
\delta_{{k}}(A/\mathfrak{m}A).
\]

\end{proposition}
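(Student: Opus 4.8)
### Proof Proposal

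The statement is a semicontinuity result for the delta invariant, comparing the delta invariant of the generic fiber $A \otimes_B \operatorname{Q}(B)$ with that of the special fiber $A/\mathfrak{m}A$. The plan is to exploit the finiteness of $\overline{A}/A$ as a $B$-module (hypothesis (5)) together with flatness over the normal local ring $B$, reducing the inequality to a comparison of fiber dimensions of a single finite $B$-module.

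First I would set $M = \overline{A}/A$, which by (5) is a finite $B$-module. The key point is to identify $\delta_{\operatorname{Q}(B)}(A \otimes_B \operatorname{Q}(B))$ with $\dim_{\operatorname{Q}(B)} M \otimes_B \operatorname{Q}(B)$: this requires showing that normalization commutes with the base change $B \to \operatorname{Q}(B)$, i.e. that $\overline{A \otimes_B \operatorname{Q}(B)} = \overline{A} \otimes_B \operatorname{Q}(B)$. This should follow because localization commutes with integral closure in total rings of fractions (the formation of $\overline{A}$ commutes with localizing $B$ at $B \setminus \{0\}$), using that $A$ is a Nagata ring so that $\overline{A}$ is module-finite over $A$; hypothesis (3) (formally equidimensional) ensures $A$ and its localizations remain reduced and equidimensional so that the total ring of fractions behaves well. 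Similarly, I would want $\dim_k M/\mathfrak{m}M \leq \delta_k(A/\mathfrak{m}A) = \dim_k \overline{A/\mathfrak{m}A}/(A/\mathfrak{m}A)$: this is where hypotheses (4) and (6) enter. Indeed, $M/\mathfrak{m}M = \overline{A}/(\mathfrak{m}\overline{A} + A)$, and there is a natural map $\overline{A}/\mathfrak{m}\overline{A} \to \overline{A/\mathfrak{m}A}$ which by (6) is injective and which is compatible with $A/\mathfrak{m}A \to \overline{A/\mathfrak{m}A}$; passing to quotients by the images of $A/\mathfrak{m}A$ gives an injection $M/\mathfrak{m}M \hookrightarrow \overline{A/\mathfrak{m}A}/(A/\mathfrak{m}A)$, whence the dimension bound. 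Here (4) guarantees $A/\mathfrak{m}A$ is reduced with finite delta invariant so these objects make sense, and hypotheses (1) and (2) guarantee that passing to the completion $\widehat{B}$ and reducing mod $\mathfrak{m}$ behaves well (geometric normality of the fibers of $B \to \widehat{B}$ is what makes the normalization of the special fiber controllable, in the spirit of Equation \eqref{equ:cond-compl} in the proof of Lemma \ref{lem Ak}).

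With these two identifications in hand, the proof reduces to the purely commutative-algebra fact that for a finite module $M$ over a local ring $B$ which is flat (or at least satisfies an appropriate torsion-freeness), the generic rank is bounded by the special fiber dimension:
\[
\dim_{\operatorname{Q}(B)} (M \otimes_B \operatorname{Q}(B)) \leq \dim_k (M/\mathfrak{m}M).
\]
This is standard: choose generators of $M$ whose images span $M/\mathfrak{m}M$, say $d = \dim_k(M/\mathfrak{m}M)$ of them; by Nakayama they generate $M$, hence they generate $M \otimes_B \operatorname{Q}(B)$ over $\operatorname{Q}(B)$, giving the bound. (Flatness of $M$, or of $A$ over $B$ as in hypothesis (4), is not even strictly needed for this direction — it is needed for the reverse direction, which we do not claim here — but flatness of $A$ over $B$ is what ties $M/\mathfrak{m}M$ to the actual special fiber $A/\mathfrak{m}A$ rather than some derived object.) Combining, $\delta_{\operatorname{Q}(B)}(A \otimes_B \operatorname{Q}(B)) = \dim_{\operatorname{Q}(B)}(M \otimes_B \operatorname{Q}(B)) \leq \dim_k(M/\mathfrak{m}M) \leq \delta_k(A/\mathfrak{m}A)$.

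The main obstacle I expect is the verification that $M/\mathfrak{m}M$ injects into $\overline{A/\mathfrak{m}A}/(A/\mathfrak{m}A)$, that is, carefully tracking the relationship between $\overline{A}/\mathfrak{m}\overline{A}$ and $\overline{A/\mathfrak{m}A}$ — the reduction of the normalization versus the normalization of the reduction. Hypothesis (6) is tailored precisely to make this work, but one has to check that the map $\overline{A}/\mathfrak{m}\overline{A} \to \overline{A/\mathfrak{m}A}$ is indeed well-defined (which uses that $\overline{A/\mathfrak{m}A}$ is integral over $A/\mathfrak{m}A$, hence receives the image of $\overline{A}$, together with (2) and (4) to see $\overline{A}/\mathfrak{m}\overline{A}$ is reduced so the map factors through it) and that dividing out by $A/\mathfrak{m}A$ on both sides preserves injectivity — a diagram chase, using that $A/\mathfrak{m}A \to \overline{A}/\mathfrak{m}\overline{A}$ is the restriction of the injection in (6). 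The flatness hypotheses (2) and (4) and the Nagata/equidimensionality hypothesis (3) are the technical inputs ensuring that all the base-change operations (localizing $B$, completing $B$, reducing mod $\mathfrak{m}$) commute with normalization in the required sense; assembling these carefully, rather than any single deep step, is the real work.
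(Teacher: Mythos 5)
The paper itself does not contain a proof of this proposition; it simply points to Lipman's paper (Prop.~2.2.1(i) for the existence and uniqueness of the factorization in~(6), Prop.~3.3 for the inequality). So there is no in-paper argument against which to compare your proposal, and the substance of your write-up is a genuinely self-contained alternative to a bare citation.

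Your argument is essentially correct. Set $M=\overline{A}/A$, a finite $B$-module by~(5). Then: (a) since $A$ is $B$-flat and $B$ is a domain, the image of $B\setminus\{0\}$ in $A$ consists of nonzerodivisors, so $A\otimes_B\operatorname{Q}(B)$ is a localization of $A$ with the same total ring of fractions; localization commutes with integral closure in the total ring of fractions, giving $\overline{A\otimes_B\operatorname{Q}(B)}=\overline{A}\otimes_B\operatorname{Q}(B)$ and hence $\delta_{\operatorname{Q}(B)}(A\otimes_B\operatorname{Q}(B))=\dim_{\operatorname{Q}(B)}(M\otimes_B\operatorname{Q}(B))$. (b) Nakayama bounds the generic rank of a finite module over a local domain by the dimension of its special fiber, so $\dim_{\operatorname{Q}(B)}(M\otimes_B\operatorname{Q}(B))\leq\dim_k(M/\mathfrak{m}M)$. (c) The map $A/\mathfrak{m}A\to\overline{A}/\mathfrak{m}\overline{A}$ is injective because its composite with the injection in~(6) is, by~(6), the normalization map of the reduced ring $A/\mathfrak{m}A$ (hence injective); applying the snake lemma to the resulting morphism of short exact sequences with rows $0\to A/\mathfrak{m}A\to\overline{A}/\mathfrak{m}\overline{A}\to M/\mathfrak{m}M\to 0$ and $0\to A/\mathfrak{m}A\to\overline{A/\mathfrak{m}A}\to\overline{A/\mathfrak{m}A}/(A/\mathfrak{m}A)\to 0$, with left vertical the identity and middle vertical the injection of~(6), gives an injection $M/\mathfrak{m}M\hookrightarrow\overline{A/\mathfrak{m}A}/(A/\mathfrak{m}A)$, hence $\dim_k(M/\mathfrak{m}M)\leq\delta_k(A/\mathfrak{m}A)$. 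Chaining (a)--(c) proves the inequality. Two minor remarks: your assertion that flatness is ``not even strictly needed'' is a little too cavalier -- flatness (at least torsion-freeness over $B$) is what ensures that inverting $B\setminus\{0\}$ is an honest localization inside $\operatorname{Q}(A)$ in step (a); and hypotheses (2) and (3) do not appear in your chain of inequalities at all, but they (together with the Nagata hypothesis) are what make the objects in the statement well-behaved -- in particular, what guarantees that the map in~(6) exists in the first place, which is the content of Lipman's Prop.~2.2.1(i) and which you are implicitly taking for granted.
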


\begin{proof}
See \cite[Prop.~2.2.1(i)]{Lipman} for the factorization in (6) and
\cite[Prop.~3.3]{Lipman} for the proof of the proposition.
\end{proof}

\begin{corollary}
\label{cor delta halbstetig} In the setting of Notation \ref{not: chapter-mod}%
, given a prime $p$, we have
\[
\delta(\Gamma)\leq\delta(\Gamma_{p}).
\]

\end{corollary}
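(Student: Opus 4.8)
The plan is to deduce the inequality from the semicontinuity statement of Proposition~\ref{thm delta halbstetig}, applied with base ring $B=\mathbb{Z}_{(p)}$ and with $A$ a suitable model of the affine curve ``along the prime $p$''.

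\emph{Setting up.} First I would pass to an affine situation. After a projective change of coordinates --- possibly over a finite extension of $\mathbb{Q}$, with $\mathbb{Z}_{(p)}$ replaced by a discrete valuation ring finite over it, which affects neither $\delta(\Gamma)$ nor $\delta(\Gamma_p)$ by the base-change invariance of the delta invariant --- I may assume that $\{X_0=0\}$ meets neither $\operatorname{Sing}(\Gamma)$ nor $\operatorname{Sing}(\Gamma_p)$. Let $C$, $C_p$ be the affine parts, so that $\delta(\Gamma)=\dim_{\mathbb{Q}}\!\big(\overline{\mathbb{Q}[C]}/\mathbb{Q}[C]\big)$ and $\delta(\Gamma_p)=\dim_{\mathbb{F}_p}\!\big(\overline{\mathbb{F}_p[C_p]}/\mathbb{F}_p[C_p]\big)$, as in the remark following Corollary~\ref{cor delta gorenstein adjoint ideal}. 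Let $\mathcal{A}=\mathbb{Z}_{(p)}[X_1,\dots,X_r]/\widetilde{I}$ be the integral model, with $\widetilde{I}$ the kernel of $\mathbb{Z}_{(p)}[X_1,\dots,X_r]\to\mathbb{Q}[C]$; then $\mathcal{A}$ is a flat two-dimensional $\mathbb{Z}_{(p)}$-domain with $\mathcal{A}\otimes_{\mathbb{Z}_{(p)}}\mathbb{Q}=\mathbb{Q}[C]$ and $\mathcal{A}\otimes_{\mathbb{Z}_{(p)}}\mathbb{F}_p=\mathbb{F}_p[C_p]$, the latter because $I(\Gamma)_p$ is radical and $\Gamma_p$ is integral. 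Finally let $A$ be the localization of $\mathcal{A}$ at the multiplicative set of elements whose reduction modulo $p$ is a unit in $\mathbb{F}_p[C_p]$. By construction the maximal ideals of $A$ are exactly those lying over the closed points of $C_p$, so that $A/pA=\mathbb{F}_p[C_p]$, while $A\otimes_{\mathbb{Z}_{(p)}}\mathbb{Q}$ is the localization of $\mathbb{Q}[C]$ in which every singular point of $C$ is retained.

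\emph{Applying the proposition.} I would then verify hypotheses $(1)$--$(6)$ with $B=\mathbb{Z}_{(p)}$, $\mathfrak{m}=p\mathbb{Z}_{(p)}$, $k=\mathbb{F}_p$. Conditions $(1)$--$(2)$ hold because $\mathbb{Z}_{(p)}$ is an excellent discrete valuation ring with perfect residue field; $(3)$ holds because $A$ is excellent --- hence Nagata and universally catenary --- and, being a localization of a domain, is formally equidimensional. For $(4)$: $A$ is $\mathbb{Z}_{(p)}$-flat, $\mathfrak{m}A=pA$ is contained in every maximal ideal of $A$ by construction, and $A/pA=\mathbb{F}_p[C_p]$ is reduced with $\delta_{\mathbb{F}_p}(A/pA)=\delta(\Gamma_p)<\infty$. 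For $(5)$: the module $\overline{\mathcal{A}}/\mathcal{A}$ is supported on the non-normal locus $V(\mathcal{C}_{\mathcal{A}})$ of $\mathcal{A}$, which consists of the finitely many singular points of $C_p$ together with the closures of the singular points of $C$; since singular points of $\Gamma$ specialize to singular points of $\Gamma_p$ by flatness, none of them reduces into the hyperplane at infinity, so those closures are finite over $\mathbb{Z}_{(p)}$, whence $\overline{\mathcal{A}}/\mathcal{A}$ is a finite $\mathbb{Z}_{(p)}$-module, and --- localization at our multiplicative set not inverting $p$ --- so is $\overline{A}/A$. The crux is $(6)$: injectivity of $\overline{A}/p\overline{A}\to\overline{A/pA}$. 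I would argue that $\overline{\mathcal{A}}$ is the integral closure of $\mathcal{A}$ in its own function field $\mathbb{Q}(C)$, so that at each height-one prime $\mathfrak{P}$ of $\overline{\mathcal{A}}$ lying over $p\mathcal{A}$ the discrete valuation ring $\overline{\mathcal{A}}_{\mathfrak{P}}$ coincides with $\mathcal{A}_{p\mathcal{A}}$, whose uniformizer is $p$ because $C_p$ is generically smooth; hence $p\overline{\mathcal{A}}$ is radical, $\overline{A}/p\overline{A}$ is reduced, and, being integral over the domain $A/pA$, it embeds into $\overline{A/pA}$. Alternatively one may invoke the comparison $(\overline{A})_p=\overline{A_p}$ from \cite{BDLPSS}.

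\emph{Conclusion.} Proposition~\ref{thm delta halbstetig} then yields $\delta_{\mathbb{Q}}\!\big(A\otimes_{\mathbb{Z}_{(p)}}\mathbb{Q}\big)\leq\delta_{\mathbb{F}_p}(A/pA)$. The right-hand side equals $\delta(\Gamma_p)$, and the left-hand side equals $\sum_{P\in\operatorname{Sing}(C)}\delta_P(\Gamma)=\delta(\Gamma)$, since the delta invariant is the sum of its local contributions at the singular points and all of these are present in $A\otimes_{\mathbb{Z}_{(p)}}\mathbb{Q}$. This proves $\delta(\Gamma)\leq\delta(\Gamma_p)$. I expect the main obstacle to be hypothesis $(6)$ --- the good behaviour of normalization under reduction modulo $p$ --- together with the bookkeeping that keeps the singular points of $\Gamma$ from escaping to infinity upon reduction, which is precisely what makes hypothesis $(5)$ go through.
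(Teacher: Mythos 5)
Your argument follows the same route as the paper's: both reduce the claim to Proposition~\ref{thm delta halbstetig}, applied with $(B,\mathfrak{m})=(\mathbb{Z}_{(p)},p\mathbb{Z}_{(p)})$ and $A$ a $\mathbb{Z}_{(p)}$-model of the affine part $C$, so that $A\otimes_B\operatorname{Q}(B)$ and $A/\mathfrak{m}A$ compute $\delta(\Gamma)$ and $\delta(\Gamma_p)$ respectively. The two places where you diverge in the details are worth recording. First, your extra localization of $\mathcal{A}=\mathbb{Z}_{(p)}[X']/J$ at the multiplicative set of elements reducing to units mod $p$ is precisely what is needed for hypothesis $(4)$, namely that $\mathfrak{m}A$ be contained in every maximal ideal of $A$: the unlocalized ring $\mathbb{Z}_{(p)}[X']/J$ used in the paper's proof \emph{does} have maximal ideals not containing $p$ (corresponding to closed points of the generic fiber), so the paper's one-line justification of $(4)$ actually establishes flatness, reducedness of $A/\mathfrak{m}A$, and finiteness of $\delta_k(A/\mathfrak{m}A)$ but not the Jacobson-radical condition; your localization repairs this cleanly while leaving the two relevant delta invariants unchanged. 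Second, you verify hypothesis $(6)$ by observing that $\mathcal{A}_{p\mathcal{A}}$ is a DVR with uniformizer $p$, hence agrees with every localization of $\overline{\mathcal{A}}$ at a height-one prime over $p\mathcal{A}$, so that $p\overline{A}$ is radical and $\overline{A}/p\overline{A}$ embeds into $\overline{A/pA}$; the paper instead proves the ad hoc Lemma~\ref{lem delta halbstetig} to produce the factoring map directly. Both routes to $(6)$ are sound, and yours has the advantage of tying the injectivity to the (given) integrality and reducedness of $\Gamma_p$ rather than to an explicit fraction manipulation.
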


\begin{proof}
Let $X^{\prime}=\{X_{1},\ldots,X_{r}\}$. We may assume that $\Gamma$
has no singularities at $X_{0}=0$. As before, let $C$ be the
affine part of $\Gamma$. Then $J:=I(C)_{0}\subset\mathbb{Z}[X^{\prime}]$ is a
prime ideal of height $n-1$, $\left\langle p,J\right\rangle $ is a prime
ideal, and $J\cap\mathbb{Z}=\left\langle 0\right\rangle $. The claim follows
by applying Proposition \ref{thm delta halbstetig} to $(B,\mathfrak{m}%
)=(\mathbb{Z}_{\left\langle p\right\rangle },\left\langle p\right\rangle )$
and $A=\mathbb{Z}_{\left\langle p\right\rangle }[X^{\prime}]/J\;\!\mathbb{Z}%
_{\left\langle p\right\rangle }[X^{\prime}]$ since, then, $A\otimes
_{B}\operatorname{Q}(B)=\mathbb{Q}[X^{\prime}]/I(C)$ and $A/\mathfrak{m}%
A=\mathbb{F}_{p}[X^{\prime}]/I(C)_{p}$, and conditions $(1)$ through $(6)$ of
the proposition are satisfied. Indeed, this is clear for $(1)$, and $(2)$ holds
since $B$ is excellent. Moreover, we have $(3)$ since $A$ is of finite type
over $B$ and $J\;\!\mathbb{Z}_{\left\langle p\right\rangle }[X^{\prime}]$ is a
prime ideal. Condition $(4)$ follows since $A$ is a torsion free $B$-module,
$\left\langle p,J\right\rangle $ is a prime ideal, and $\operatorname{Spec}%
(A/\mathfrak{m}A)$ is a curve. We obtain $(5)$ since $A/\mathcal{C}_{A}$ is a
finite $B$-module and $\overline{A}/\mathcal{C}_{A}$ is a finite
$A/\mathcal{C}_{A}$-module. Condition $(6)$ follows from Lemma
\ref{lem delta halbstetig} below which gives us a canonical map
\[
\overline{A}\rightarrow\overline{A/mA}\text{, }\alpha=\frac{\overline{a}%
}{\overline{b}}\mapsto\frac{a\operatorname{mod}\left\langle p,J\right\rangle
}{b\operatorname{mod}\left\langle p,J\right\rangle },
\]
where $\overline{a},\overline{b}$ are the images of $a,b\in\mathbb{Z}%
_{\left\langle p\right\rangle }[X^{\prime}]$ in $A$, and $b\notin\left\langle
p,J\right\rangle $. Since $\alpha=\frac{\overline{a}}{\overline{b}}$ is in the
kernel of this map iff $a\in\left\langle p,J\right\rangle $, we get an
injective map $\overline{A}/\mathfrak{m}\overline{A}\rightarrow\overline
{A/\mathfrak{m}A}$ which factors the normalization map as desired.
\end{proof}

Before deriving Lemma \ref{lem delta halbstetig}, we illustrate condition
$(6)$ by an example.

\begin{example}
Let $(B, \mathfrak{m})=(\mathbb{Z}_{\left\langle 3\right\rangle },
\left\langle 3\right\rangle )$ and $A=\mathbb{Z}_{\left\langle 3\right\rangle
}[X,Y]/\left\langle X^{3}+Y^{3}+Y^{5}\right\rangle $. Then $\overline
{A/\mathfrak{m}A}=\left\langle 1,\frac{x}{y},\frac{(x+y)^{2}}{y^{3}%
}\right\rangle _{A/\mathfrak{m}A}$ and $\overline{A}=\left\langle 1,\frac
{x}{y},\frac{x^{2}}{y^{2}}\right\rangle _{A}$. We compute $\delta_{\mathbb{Q}%
}(A\otimes_{B}\mathbb{Q})=3$ and $\delta_{\mathbb{F}_{p}}(A/\mathfrak{m}A)=4$,
and find that
\[
\overline{A}/\mathfrak{m}\overline{A}=\left\langle 1,\frac{x}{y},\frac{x^{2}%
}{y^{2}}\right\rangle _{A/\mathfrak{m}A}\subsetneqq\left\langle 1,\frac{x}%
{y},\frac{(x+y)^{2}}{y^{3}}\right\rangle _{A/\mathfrak{m}A}=\overline
{A/\mathfrak{m}A}\text{.}%
\]

\end{example}

\begin{lemma}
\label{lem delta halbstetig}With the notation of the proof of Corollary
\ref{cor delta halbstetig}, for any $\alpha\in\overline{A}$ there exist
$a,b\in\mathbb{Z}[X^{\prime}]$ with $b\notin\left\langle p,J\right\rangle $
such that $\alpha=\frac{\overline{a}}{\overline{b}}$.
\end{lemma}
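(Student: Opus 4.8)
The plan is to reduce the claim to a statement about the honestly arithmetic ring $R_{0}:=\mathbb{Z}[X^{\prime}]/J$, and then to exploit that the prime $\langle p,J\rangle$ is a codimension-one point of $\operatorname{Spec}R_{0}$ with principal maximal ideal, so that $R_{0}$ is already normal there. First I would record the structural set-up. Since $J$ is prime and $J\cap\mathbb{Z}=\langle 0\rangle$, the ring $R_{0}$ is a domain of characteristic zero; being finitely generated over the excellent ring $\mathbb{Z}$, it is a Nagata ring, hence normalization-finite, so $\overline{R_{0}}$ is a finite $R_{0}$-module and, by Lemma \ref{lemma-role-of-cond}, $\mathcal{C}_{R_{0}}$ contains a nonzerodivisor and $N(R_{0})=V(\mathcal{C}_{R_{0}})$. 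Moreover $A=\mathbb{Z}_{\langle p\rangle}[X^{\prime}]/J\,\mathbb{Z}_{\langle p\rangle}[X^{\prime}]=S^{-1}R_{0}$, where $S=\mathbb{Z}\setminus\langle p\rangle$; in particular $\operatorname{Q}(A)=\operatorname{Q}(R_{0})$, and since normalization commutes with localization, $\overline{A}=S^{-1}\overline{R_{0}}$.

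Next I would analyze the prime $\mathfrak{q}:=\langle p,J\rangle/J=pR_{0}$ of $R_{0}$, which is prime and proper (this is part of the set-up in the proof of Corollary \ref{cor delta halbstetig}). Here $p$ is a nonzero nonunit of the domain $R_{0}$, so $\operatorname{ht}\mathfrak{q}=1$ by Krull's principal ideal theorem, while the maximal ideal of the one-dimensional Noetherian local domain $(R_{0})_{\mathfrak{q}}$ is generated by the single element $p$; hence $(R_{0})_{\mathfrak{q}}$ is regular and therefore normal (cf. Remark \ref{rem:nnlocus-singlocus-dim-one}). Consequently $\mathfrak{q}\notin N(R_{0})=V(\mathcal{C}_{R_{0}})$, that is, $\mathcal{C}_{R_{0}}\not\subseteq\mathfrak{q}$. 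I would then fix an element $e_{0}\in\mathcal{C}_{R_{0}}$ with $e_{0}\notin\mathfrak{q}$ (so $e_{0}\neq 0$) together with a preimage $e\in\mathbb{Z}[X^{\prime}]$ of $e_{0}$ under $\mathbb{Z}[X^{\prime}]\twoheadrightarrow R_{0}$; then $e\notin\langle p,J\rangle$.

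Finally, given $\alpha\in\overline{A}=S^{-1}\overline{R_{0}}$, I would write $\alpha=\beta/t$ with $\beta\in\overline{R_{0}}$ and $t\in S$. Since $e_{0}\in\mathcal{C}_{R_{0}}$, we have $e_{0}\beta\in R_{0}$; choose $a\in\mathbb{Z}[X^{\prime}]$ mapping to $e_{0}\beta$, and set $b:=t\,e\in\mathbb{Z}[X^{\prime}]$. As $\langle p,J\rangle$ is prime, $t\notin\langle p,J\rangle$ (because $\langle p,J\rangle\cap\mathbb{Z}=\langle p\rangle$) and $e\notin\langle p,J\rangle$, we conclude $b\notin\langle p,J\rangle$. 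Writing $\overline{a},\overline{b}$ for the images of $a,b$ in $A$, the image $\overline{b}$ equals the nonzero element $t e_{0}$ of $R_{0}\hookrightarrow A$, and in $\operatorname{Q}(A)=\operatorname{Q}(R_{0})$ one computes $\overline{a}/\overline{b}=(e_{0}\beta)/(te_{0})=\beta/t=\alpha$, as desired.

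The only step with genuine content — and hence the one I expect to be the main obstacle — is the observation that $(R_{0})_{\langle p,J\rangle}$ is regular because its maximal ideal is principal; this is precisely what forces $\mathcal{C}_{R_{0}}$ to meet the complement of $\langle p,J\rangle$ and thereby lets the denominator $b$ be chosen outside $\langle p,J\rangle$. Everything else is routine bookkeeping with localizations and preimages.
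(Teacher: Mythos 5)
Your argument is correct, but it is structurally different from the one in the paper. The paper works directly with an integral-dependence equation for $\alpha$ over $A$: writing $\alpha = (a \bmod J)/(b\bmod J)$ and clearing denominators in the monic relation, one finds that if $b\in\langle p,J\rangle$ then $a\in\langle p,J\rangle$ as well, so both $a$ and $b$ are divisible by $p$ modulo $J$; iterating this cancellation and invoking $\bigcap_{s}\langle p^{s},J\rangle = J$ (Krull intersection in the Noetherian domain $\mathbb{Z}[X']/J$) forces the process to stop at some $b_{s}\notin\langle p,J\rangle$. You instead pass to $R_{0}=\mathbb{Z}[X']/J$, observe that $\mathfrak{q}=pR_{0}$ is a height-one prime whose local ring is a one-dimensional Noetherian local domain with principal maximal ideal, hence a DVR and in particular normal, so that $\mathcal{C}_{R_{0}}\not\subseteq\mathfrak{q}$; a conductor element $e_{0}\notin\mathfrak{q}$ then serves as a universal denominator. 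The steps you flag as requiring care are indeed where the content lies: you need $R_{0}$ to be normalization-finite in order to apply Lemma~\ref{lemma-role-of-cond}, and since $R_{0}$ is an arithmetic ring rather than an affine domain over a field, Noether's finiteness theorem does not apply directly; your appeal to excellence of $\mathbb{Z}$ (or the Nagata property) is the right and necessary fix. The trade-off is clear: the paper's proof is more elementary, needing only Krull intersection, while yours is more conceptual and isolates the structural reason (normality at codimension one over $\langle p\rangle$) behind the statement, at the price of invoking the heavier machinery of excellent rings.
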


\begin{proof}
For $\alpha\in\overline{A}\subset\operatorname{Q}(A)=\operatorname{Q}%
(\mathbb{Z}[X^{\prime}]/J)$, there are $a,b\in\mathbb{Z}[X^{\prime}]$ with
$b\notin J$ and $\alpha=\frac{a\operatorname{mod}J}{b\operatorname{mod}J}$,
and there are $a_{0},\ldots,a_{m-1}\in\mathbb{Z}[X^{\prime}]$ and
$d\in\mathbb{Z}$ with $p\nmid d$ such that%
\[
\alpha^{m}+\frac{a_{m-1}\operatorname{mod}J}{d}\alpha^{m-1}+\ldots+\frac
{a_{0}\operatorname{mod}J}{d}=0\text{,}%
\]
that is, $d\cdot a^{m}+a_{m-1}\cdot ba^{m-1}+\ldots+a_{0}\cdot b^{m}\in J$.

If $b\in\left\langle p,J\right\rangle $, then $d\cdot a^{m}\in\left\langle
p,J\right\rangle $, hence, since $J$ is radical, $a\in\left\langle
p,J\right\rangle $. Then $a=pa_{1}+c_{1}$ and $b=pb_{1}+d_{1}$ with
$a_{1},b_{1}\in\mathbb{Z}[X^{\prime}]$ and $c_{1},d_{1}\in J$. If $b_{1}%
\in\left\langle p,J\right\rangle $, we can iterate the process. Inductively,
we obtain $a_{s},b_{s}\in\mathbb{Z}[X^{\prime}]$ and $c_{s},d_{s}\in J$ with
$a=p^{s}a_{s}+c_{s}$ and $b=p^{s}b_{s}+d_{s}$. If $b_{s}\in\left\langle
p,J\right\rangle $ for all $s$, then $b\in%
{\displaystyle\bigcap\nolimits_{s}}
\left\langle p^{s},J\right\rangle =J$, a contradiction. Otherwise there is an
$s$ with $b_{s}\notin\left\langle p,J\right\rangle $. Then%
\[
\alpha=\frac{a\operatorname{mod}J}{b\operatorname{mod}J}=\frac{p^{s}%
a_{s}\operatorname{mod}J}{p^{s}b_{s}\operatorname{mod}J}=\frac{a_{s}%
\operatorname{mod}J}{b_{s}\operatorname{mod}J}\text{.}%
\]

\end{proof}

In the following, we write again $\pi:\overline{\Gamma}\rightarrow\Gamma$ for
the normalization map, and denote by $M$ the vanishing ideal of
$\operatorname{Sing}(\Gamma)$ in $R$. Consider a homogeneous polynomial $g\in
I=\left\langle G\right\rangle _{R}$ not contained in $I(\Gamma)$, and let $m$
be its degree. Let $\operatorname{div}(g)$ be the divisor cut out by
$\pi^{\ast}g$ on $\overline{\Gamma}$, let $D(g)=\operatorname{div}%
(g)-\Delta(I)$ be the corresponding divisor in $\left\vert mH-\Delta
(I)\right\vert $, and let $d(g)=\deg D(g)$. Furthermore, write $\widetilde
{d}(g)$ for the degree of the part of $D(g)$ away from $\operatorname{Sing}%
(\Gamma)$. Then $\widetilde{d}(g)\leq d(g)$, and $\widetilde{d}(g)$ can be
computed as%
\[
\widetilde{d}(g)=\deg\left(  (I(\Gamma)+\left\langle g\right\rangle
):M^{\infty}\right)
\]
provided that $I:M^{\infty}=\left\langle 1\right\rangle $, what we will
henceforth assume (in Algorithm \ref{algModNormal} below, if this condition is
not fulfilled, we enlarge our set of primes).

\begin{theorem}
Let $I=\langle G\rangle_{R}$ be as above, and let $p$ be a prime number. Suppose:

\begin{enumerate}
\item $\operatorname{LM}(I(\Gamma_{p}))=\operatorname{LM}(I(\Gamma))$,

\item $G(p)$ is a Gr\"{o}bner basis of an adjoint ideal of $\Gamma_{p}$,

\item $G_{p}=G(p)$,

\item $\widetilde{d}(g_{p})=\deg(\Gamma)\cdot m-\deg(\left\langle
G(p)\right\rangle _{R_{p}})-\delta(\Gamma)$, and

\item $m$ is large enough to ensure that $\left\vert m H-\Delta(I)\right\vert
$ is nonspecial.
\end{enumerate}
Then%
\[
\deg\Delta(I)=\deg(\Gamma)\cdot m-\widetilde{d}(g_{p})\text{.}%
\]
Furthermore, $\deg\Delta(I)=\deg\Delta(I_{p})$, and $I$ is an adjoint ideal of
$\Gamma$.
\end{theorem}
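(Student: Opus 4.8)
The plan is to connect the three quantities $\deg\Delta(I)$, $\widetilde{d}(g_p)$, and the analogous objects in characteristic $p$, and then to invoke Lemma \ref{lem deg double point divisor} to conclude. First I would observe that, by hypothesis (5) and Riemann-Roch on $\overline{\Gamma}$, the divisor $D(g)$ in $|mH-\Delta(I)|$ has degree
\[
d(g)=\deg(\Gamma)\cdot m-\deg\Delta(I).
\]
Since $D(g)=\widetilde D(g)+\big(\text{part supported on }\operatorname{Sing}(\Gamma)\big)$, we have $\widetilde{d}(g)\le d(g)$, and equality holds precisely when $g$ meets $\overline{\Gamma}$ transversally away from the preimages of the singular locus — but I will not need equality for $g$ itself; instead I will compare with the mod $p$ picture. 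The key point is that the purely algebraic quantity $\widetilde d(g)=\deg\big((I(\Gamma)+\langle g\rangle):M^\infty\big)$ is computed by a Gröbner basis / saturation computation, so by hypotheses (1) and (3) (which pin down the leading monomials of $I(\Gamma)$ and the generators $G$ after reduction mod $p$) this computation commutes with reduction mod $p$, giving
\[
\widetilde d(g)=\widetilde d(g_p).
\]

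Next I would run the characteristic-$p$ side. By hypothesis (2), $\langle G(p)\rangle_{R_p}$ is an adjoint ideal of $\Gamma_p$, so by Lemma \ref{lem deg double point divisor} applied over $\mathbb{F}_p$,
\[
\deg\Delta(\langle G(p)\rangle)=\deg\langle G(p)\rangle+\delta(\Gamma_p).
\]
Combining this with Riemann-Roch over $\overline{\Gamma_p}$ for the divisor cut out by $g_p$ (valid because, by semicontinuity of $h^0$ and hypothesis (5), $|mH-\Delta(I_p)|$ is again nonspecial for this large $m$), and using that $g_p$ meets $\overline{\Gamma_p}$ transversally away from $\operatorname{Sing}(\Gamma_p)$ — which is exactly what hypothesis (4) encodes, namely $\widetilde d(g_p)=\deg(\Gamma)\cdot m-\deg\langle G(p)\rangle-\delta(\Gamma)$ together with Corollary \ref{cor delta halbstetig} forcing $\delta(\Gamma)=\delta(\Gamma_p)$ here — I get $\widetilde d(g_p)=d(g_p)=\deg(\Gamma)\cdot m-\deg\Delta(I_p)$. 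Feeding back $\widetilde d(g)=\widetilde d(g_p)$ and the Riemann-Roch identity on the characteristic-zero side yields
\[
\deg\Delta(I)=\deg(\Gamma)\cdot m-\widetilde d(g_p)=\deg\Delta(I_p),
\]
which is the displayed formula and the asserted equality $\deg\Delta(I)=\deg\Delta(I_p)$.

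Finally, to see that $I$ is an adjoint ideal of $\Gamma$, I would check the criterion of Lemma \ref{lem deg double point divisor}: condition (1) there is hypothesis (1) combined with $I(\Gamma)\subsetneqq I$ (which holds since $G$ is not contained in $I(\Gamma)$, witnessed by $g$); condition (2) there is the identity $\deg\Delta(I)=\deg I+\delta(\Gamma)$, which now follows by taking the characteristic-$p$ equality $\deg\Delta(I_p)=\deg\langle G(p)\rangle+\delta(\Gamma_p)$ (from Lemma \ref{lem deg double point divisor} over $\mathbb{F}_p$ and hypothesis (2)), and transporting it to characteristic zero via $\deg\Delta(I)=\deg\Delta(I_p)$, $\deg I=\deg\langle G(p)\rangle$ (hypotheses (1) and (3): the Hilbert polynomial is invariant under good reduction), and $\delta(\Gamma)=\delta(\Gamma_p)$ (forced by Corollary \ref{cor delta halbstetig} in the situation where hypothesis (4) holds). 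The main obstacle I anticipate is the careful bookkeeping around hypothesis (4): one must argue that its validity \emph{forces} $\delta(\Gamma)=\delta(\Gamma_p)$ — otherwise $\widetilde d(g_p)$ would only give an inequality via Corollary \ref{cor delta halbstetig} — and that the nonspeciality in hypothesis (5) genuinely transfers to $\overline{\Gamma_p}$ for the \emph{same} $m$; both are where the precise degree arithmetic has to be handled with care rather than hand-waved.
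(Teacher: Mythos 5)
The central step in your proposal is the claim that $\widetilde d(g)=\widetilde d(g_p)$ ``because the saturation computation $(I(\Gamma)+\langle g\rangle):M^\infty$ commutes with reduction mod $p$ by hypotheses (1) and (3).'' This is a genuine gap, and it is precisely the part of the argument the paper has to work hardest for. Hypotheses (1) and (3) control the leading monomials of $I(\Gamma)$ and identify $G_p$ with $G(p)$; they do \emph{not} guarantee that the Gr\"obner basis computation of the saturation by $M$ commutes with reduction modulo $p$. Saturation is exactly the kind of operation for which bad primes occur (indeed, the whole modular framework of Section \ref{sec parallel and modular} exists because such operations fail to commute with reduction at finitely many primes), and none of the listed hypotheses rules out $p$ being bad for that particular saturation. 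Once this identity is dropped, the remainder of your argument no longer closes: you have no bridge from the characteristic $p$ degree data back to characteristic zero, other than the one-sided inequality $\delta(\Gamma)\leq\delta(\Gamma_p)$ from Corollary \ref{cor delta halbstetig}.

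What the paper actually does is more delicate. After using hypothesis (4), Corollary \ref{cor delta halbstetig}, and Lemma \ref{lem deg double point divisor} to pin down $\widetilde d(g_p)=d(g_p)$, $\delta(\Gamma)=\delta(\Gamma_p)$, and the \emph{one-sided} inequality $\deg\Delta(I_p)\geq\deg\Delta(I)$, i.e.\ $d(g_p)\leq d(g)$, it establishes the reverse inequality $d(g_p)\geq d(g)$ by a geometric argument over $\operatorname{Spec}\mathbb Z$: it builds the flat family $\mathcal X_V\to V$ using constancy of the Hilbert polynomial, invokes Lipman's $\delta$-constant criterion for simultaneous normalization (legitimate precisely because $\delta(\Gamma)=\delta(\Gamma_p)$ has already been forced) to obtain equinormalizability over an open $U\ni\langle p\rangle$, and then applies semicontinuity of $h^0$ together with Riemann--Roch and hypothesis (5) on the characteristic zero side. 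Your proposal neither invokes the $\delta$-constant criterion nor compares $h^0$ across a flat family, so even after correcting the false commutation claim you would still be missing the reverse inequality $d(g_p)\geq d(g)$. You also assert that ``$|mH-\Delta(I_p)|$ is again nonspecial for this large $m$'' via semicontinuity of $h^0$; the paper does not need (and does not prove) nonspeciality in characteristic $p$, since $d(g_p)=\deg(\Gamma_p)\cdot m-\deg\Delta(I_p)$ already holds by definition of the divisor $D(g_p)$, without any Riemann--Roch input on the characteristic $p$ fiber.
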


\begin{remark}
To apply the theorem in the setup above, note: Condition $(1)$ can easily be
tested. Furthermore, $(2)$ and $(3)$ are satisfied by the construction of $G$.
Since we know how to compute $\delta(\Gamma)$, condition $(4)$ can be tested.
With respect to $(5)$, we will comment on how to choose $m$ in Lemma
\ref{lem bound m} below.
\end{remark}

\begin{proof}
[Proof of the theorem]By $(1)$, $\deg(\Gamma_{p})=\deg(\Gamma)$ and
$p_{a}(\Gamma_{p})=p_{a}(\Gamma)$. First note, that by $(3)$%
\[
I_{p}=\left\langle G\right\rangle _{R_{p}}=\left\langle G(p)\right\rangle
_{R_{p}}%
\]
and, as $G$ is assumed to be a Gr\"{o}bner basis,%
\begin{equation}
\deg(\left\langle G\right\rangle _{R})=\deg(\left\langle G(p)\right\rangle
_{R_{p}})\text{.} \label{equ deg G Gp}%
\end{equation}
By Corollary \ref{cor delta halbstetig}, we have $\delta(\Gamma)\leq
\delta(\Gamma_{p})$. Hence%
\begin{align*}
\widetilde{d}(g_{p})  &  \leq d(g_{p})=\deg(\Gamma_{p})\cdot m-\deg
\Delta(I_{p})\\
&  =\deg(\Gamma)\cdot m-\deg(I_{p})-\delta(\Gamma_{p})\\
&  \leq\deg(\Gamma)\cdot m-\deg(I_{p})-\delta(\Gamma)
\end{align*}
using that by $(2)$ the ideal $I_{p}$ is an adjoint ideal of $\Gamma_{p}$. By
$(4)$ the chain of inequalities is an equality, hence%
\[
\widetilde{d}(g_{p})=d(g_{p})=\deg(\Gamma_{p})\cdot m-\deg\Delta(I_{p})
\]
and%
\[
\delta(\Gamma)=\delta(\Gamma_{p})\text{.}%
\]
By (\ref{equ deg G Gp}) and Lemma \ref{lem deg double point divisor} this
implies that%
\begin{equation}
\deg\Delta(I_{p})=\deg(I_{p})+\delta(\Gamma_{p})=\deg(I)+\delta(\Gamma
)\geq\deg\Delta(I)\text{,} \label{equ deg delta I}%
\end{equation}
or equivalently%
\[
d(g_{p})\leq d(g)\text{.}%
\]

To prove equality, we consider the closed subscheme%
\[%
\mathcal{X}=V(I(\Gamma)_{0})\subset \mathbb{P}_{\mathbb{Z}}^{r} \overset{\pi}{\longrightarrow} \operatorname{Spec}\mathbb{Z}
\]
with projection $\pi$ and fibers $\mathcal{X}_{q}=\pi^{-1}(\left\langle
q\right\rangle )=\mathcal{X}\times_{\operatorname{Spec}\mathbb{Z}%
}\operatorname{Spec}\kappa(\left\langle q\right\rangle )$. So over the generic
point $\left\langle 0\right\rangle \in\operatorname{Spec}\mathbb{Z}$ the fiber
is $\mathcal{X}_{0}=\Gamma$ and over $\left\langle p\right\rangle $ it is
$\mathcal{X}_{p}=\Gamma_{p}$. By $(1)$ the Hilbert polynomials of $\Gamma$ and
$\Gamma_{p}$ are equal, hence there is a Zariski open subset $V\subset
\operatorname{Spec}\mathbb{Z}$ with $\left\langle p\right\rangle \in V$ such
that the Hilbert polynomial is constant on $V$. So $\pi_{V}:\mathcal{X}%
_{V}=\pi^{-1}(V)\rightarrow V$ is a flat family (see \cite[Ch. III, Thm.
9.9]{Hartshorne}).

Since $\delta(\Gamma_{p})=\delta(\Gamma)$, the $\delta$-constant criterion for
simultaneous normalization (see \cite{Lipman}) implies that there is a Zariski
open subset $U\subset V\subset\operatorname{Spec}\mathbb{Z}$ with
$\left\langle p\right\rangle \in U$ such that $\pi_{U}$ is equinormalizable.
That is, there is a finite map $\nu:Z\rightarrow\mathcal{X}_{U}$ such
that $\overline{\pi}:=\pi_{U}\circ\nu$ is flat with nonempty geometrically
normal fibers, and for each $\left\langle q\right\rangle \in U$ the induced
map on the fibers $\nu_{q}:\overline{\mathcal{X}}_{q}=\overline{\pi}%
^{-1}(\left\langle q\right\rangle )\rightarrow\pi^{-1}(\left\langle
q\right\rangle )=\mathcal{X}_{q}$ is a normalization map.

Since, by construction, the family of sheaves defined by $I_0$ is flat over $U$ and $U$ contains both
$\left\langle 0\right\rangle $ and $\left\langle p\right\rangle $, the
semicontinuity theorem (see, for example, \cite[Ch. 5, Thm. 3.20]{Liu})
implies that the dimensions of the linear series induced by $I$ on
$\overline{\Gamma}$ and by $I_{p}$ on $\overline{\Gamma}_{p}$ satisfy%
\[
h^{0}\left(  \overline{\Gamma}_{p},\mathcal{O}_{\overline{\Gamma}_{p}}(m\cdot
H_{p}-\Delta(I_{p}))\right)  \geq h^{0}\left(  \overline{\Gamma}%
,\mathcal{O}_{\overline{\Gamma}}(m\cdot H-\Delta(I))\right)  \text{.}%
\]
Hence by $(5)$, Riemann-Roch, and $\delta(\Gamma_{p})=\delta(\Gamma)$ it
follows that the degrees of the linear series satisfy $d(g_{p})\geq d(g)$, so
we obtain the second equality in%
\[
\widetilde{d}(g_{p})=d(g_{p})=d(g)%
\]
(having shown the first already above). The second equality also translates
into $\deg\Delta(I_{p})=\deg\Delta(I)$ which, by \eqref{equ deg delta I},
implies that $I$ is an adjoint ideal. Moreover,%
\[
\deg(\Gamma)\cdot m-\deg\Delta(I)=\deg(\Gamma_{p})\cdot m-\deg\Delta
(I_{p})=\widetilde{d}(g_{p})\text{.}%
\]

\end{proof}

\begin{remark}
Suppose now, in addition to the previous assumptions, that $I_{p}$ is the
Gorenstein adjoint ideal of $\Gamma_{p}$. Since $I$ is an adjoint ideal of
$\Gamma$, we have $I\subset\mathfrak{G}$ which implies $\deg I\geq
\deg\mathfrak{G}$, hence%
\begin{equation}
\deg\Delta(\mathfrak{G})=\deg(\mathfrak{G})+\delta(\Gamma)\leq\deg
(I)+\delta(\Gamma)=\deg\Delta(I)=\deg\Delta(I_{p})\text{.}
\label{equ Gorenstein inequality}%
\end{equation}
Moreover, by semicontinuity
\[
\dim\left\vert m\cdot H_{p}-\Delta(I_{p})\right\vert \geq\dim\left\vert m\cdot
H-\Delta(\mathfrak{G})\right\vert
\]
for $m$ large enough, so by Riemann-Roch and $\delta(\Gamma_{p})=\delta
(\Gamma)$ we have%
\[
\deg\Delta(I_{p})\leq\deg\Delta(\mathfrak{G})\text{.}%
\]
Hence \eqref{equ Gorenstein inequality} is an equality and implies%
\[
\deg I=\deg\mathfrak{G}\text{,}%
\]
that is, $I$ is the Gorenstein adjoint ideal of $\Gamma$.
\end{remark}

In order to expect condition $(4)$ to be satisfied for randomly chosen $g$ and
$p$, the degree $m$ has to be chosen large enough such that $\widetilde
{d}(g)=\deg(\Gamma)\cdot m-\deg\Delta(\mathfrak{G})$ for a generic
$g\in\mathfrak{G}_{m}$ (taking into account that $\widetilde{d}(g)=\widetilde
{d}(g_{p})$, and $\delta(\Gamma)=\delta(\Gamma_{p})$ and $I_{p}=\mathfrak{G}%
_{p}$, hence $\deg\Delta(I_{p})=\deg\Delta(\mathfrak{G})$ holds true for all
but finitely many primes $p$). The following lemma specifies an appropriate
bound for $m$, which will also be sufficient to obtain $(5)$.

\begin{lemma}
\label{lem bound m}Consider an integer $m$ such that $\operatorname*{P}%
\nolimits_{\Gamma}(m)-1\geq p_{a}(\Gamma)$ and suppose that $g\in
\mathfrak{G}_{m}$ is generic. Then%
\[
\widetilde{d}(g)=\deg(\Gamma)\cdot m-\deg\Delta(\mathfrak{G})
\]
Furthermore, $\left\vert mH-\Delta(\mathfrak{G})\right\vert $ is nonspecial.
\end{lemma}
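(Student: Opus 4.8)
The plan is to handle the two assertions separately, dealing with the nonspeciality first. Since $\operatorname*{P}_{\Gamma}(m)=\deg(\Gamma)\cdot m-p_{a}(\Gamma)+1$, the hypothesis $\operatorname*{P}_{\Gamma}(m)-1\geq p_{a}(\Gamma)$ is nothing but $\deg(\Gamma)\cdot m\geq 2p_{a}(\Gamma)$. Because $\mathfrak{G}$ is an adjoint ideal of $\Gamma$, Lemma \ref{lem deg double point divisor} (see also Remark \ref{lem deg delta}) gives $\deg\Delta(\mathfrak{G})=\deg\mathfrak{G}+\delta(\Gamma)$, and since $\operatorname{char}k=0$, Corollary \ref{cor delta gorenstein adjoint ideal} gives $\deg\mathfrak{G}\leq\delta(\Gamma)$; hence $\deg\Delta(\mathfrak{G})\leq 2\delta(\Gamma)$ and
\[
\deg\big(mH-\Delta(\mathfrak{G})\big)=\deg(\Gamma)\cdot m-\deg\Delta(\mathfrak{G})\;\geq\;2p_{a}(\Gamma)-2\delta(\Gamma)\;=\;2p(\Gamma)=2p(\overline{\Gamma}).
\]
As this degree exceeds $2p(\overline{\Gamma})-2$, the invertible sheaf $\mathcal{O}_{\overline{\Gamma}}(mH-\Delta(\mathfrak{G}))$ is nonspecial; and as the degree is $\geq 2p(\overline{\Gamma})$, the complete linear system $|mH-\Delta(\mathfrak{G})|$ is moreover base-point free. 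This settles the \emph{furthermore} part.

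For the first statement, fix $g\in\mathfrak{G}_{m}$ with $g\notin I(\Gamma)$, which is the situation for generic $g$ since $I(\Gamma)\subsetneqq\mathfrak{G}$. Then $\pi^{\ast}g$ is a nonzero section of $\mathcal{O}_{\overline{\Gamma}}(mH)$, so $\deg\operatorname{div}(g)=\deg(\Gamma)\cdot m$; as $g\in\mathfrak{G}$, we have $\operatorname{div}(g)=D(g)+\Delta(\mathfrak{G})$ with $D(g)\geq 0$, whence
\[
d(g)=\deg D(g)=\deg(\Gamma)\cdot m-\deg\Delta(\mathfrak{G})
\]
for \emph{every} such $g$. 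Since $\widetilde{d}(g)\leq d(g)$, with equality exactly when $D(g)$ is disjoint from the finite set $\pi^{-1}(\operatorname{Sing}(\Gamma))$, it remains to show that $D(g)$ avoids $\pi^{-1}(\operatorname{Sing}(\Gamma))$ for generic $g$. The divisors $D(g)$ ($g\in\mathfrak{G}_{m}\setminus I(\Gamma)_{m}$) form the linear system on $\overline{\Gamma}$ cut out by the image of $\varrho_{m}\colon\mathfrak{G}_{m}\to H^{0}\big(\overline{\Gamma},\mathcal{O}_{\overline{\Gamma}}(mH-\Delta(\mathfrak{G}))\big)$. Here $\mathfrak{G}$ being an adjoint ideal makes the cokernel sheaf $\mathcal{F}$ of Remark \ref{rmk kernel rho} vanish (Theorem \ref{thm locally extended}), so $\widetilde{\mathfrak{G}}\mathcal{O}_{\Gamma}(m)\cong\pi_{\ast}\mathcal{O}_{\overline{\Gamma}}(mH-\Delta(\mathfrak{G}))$; combined with the nonspeciality just proved and the saturatedness of $\mathfrak{G}$, this yields, for the $m$ at hand, that $\varrho_{m}$ is onto, i.e. that $\mathfrak{G}_{m}$ cuts out the complete system $|mH-\Delta(\mathfrak{G})|$. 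As that system is base-point free, a generic member misses the finite set $\pi^{-1}(\operatorname{Sing}(\Gamma))$, and therefore for generic $g$ we obtain $\widetilde{d}(g)=d(g)=\deg(\Gamma)\cdot m-\deg\Delta(\mathfrak{G})$.

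The step I expect to be the main obstacle is the surjectivity of $\varrho_{m}$ for the \emph{specific} value of $m$ allowed by the hypothesis — equivalently, that $\mathfrak{G}_{m}$ already cuts out the complete, base-point-free system on $\overline{\Gamma}$, and not merely a proper sub-system that could still have base points over $\operatorname{Sing}(\Gamma)$. For plane curves this requires no work: by Max Noether's Fundamentalsatz $\varrho_{m}$ is surjective for all $m$ (Remark \ref{rem hilbert function adjoint ideal}). In general one has to verify that $\operatorname*{P}_{\Gamma}(m)-1\geq p_{a}(\Gamma)$ forces the natural map $H^{0}(\mathbb{P}^{r},\widetilde{\mathfrak{G}}(m))\to H^{0}(\Gamma,\widetilde{\mathfrak{G}}\mathcal{O}_{\Gamma}(m))$ to be onto; the clean route is to compare $\dim_{k}\big(\mathfrak{G}_{m}/I(\Gamma)_{m}\big)$ with $h^{0}\big(\overline{\Gamma},\mathcal{O}_{\overline{\Gamma}}(mH-\Delta(\mathfrak{G}))\big)=\operatorname*{P}_{\Gamma}(m)-\deg\mathfrak{G}$, using the nonspeciality from the first part together with $\mathcal{F}=0$, and to conclude that the two numbers agree. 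Everything else in the argument is a direct consequence of the identity $\deg\Delta(\mathfrak{G})=\deg\mathfrak{G}+\delta(\Gamma)$ and the characteristic-zero bound $\deg\mathfrak{G}\leq\delta(\Gamma)$.
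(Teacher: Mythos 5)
Your argument for the nonspeciality and base-point-freeness of the complete system $\lvert mH-\Delta(\mathfrak{G})\rvert$ is exactly the paper's: the hypothesis rewrites as $\deg(\Gamma)\cdot m\geq 2p_{a}(\Gamma)$, then $\deg\Delta(\mathfrak{G})\leq 2\delta(\Gamma)$ from Corollary \ref{cor delta gorenstein adjoint ideal} gives $\deg(mH-\Delta(\mathfrak{G}))\geq 2p(\overline{\Gamma})$, and one invokes the standard degree criteria on a smooth curve. So that half matches.

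For the first assertion, you depart from the paper by being much more explicit. The paper simply says that base-point-freeness of $\lvert mH-\Delta(\mathfrak{G})\rvert$ together with genericity of $g$ yields $d(g)=\widetilde{d}(g)$, and stops. You correctly point out that this logical step is not automatic: base-point-freeness of the \emph{complete} system does not by itself imply that the \emph{subsystem} cut out by $\mathfrak{G}_{m}$ avoids $\pi^{-1}(\operatorname{Sing}(\Gamma))$ for generic $g$; one needs the image of $\mathfrak{G}_{m}$ in the relevant stalks to achieve the minimal vanishing order $\Delta(\mathfrak{G})_{Q}$ — equivalently, surjectivity of $\varrho_{m}$ (or at least of the map into the stalk over each singular point). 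You are also right that in the plane case this is free by Max Noether (Remark \ref{rem hilbert function adjoint ideal}), but that in the non-planar case it is not an immediate consequence of the hypothesis $\operatorname*{P}_{\Gamma}(m)-1\geq p_{a}(\Gamma)$ alone.

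Where your proposal stops short is exactly the step you flag: you sketch a dimension count (comparing $\dim_{k}(\mathfrak{G}_{m}/I(\Gamma)_{m})$ with $h^{0}(\overline{\Gamma},\mathcal{O}_{\overline{\Gamma}}(mH-\Delta(\mathfrak{G})))$ using $\mathcal{F}=0$ and the Riemann--Roch count) but do not carry it out, and the equalities $\dim_{k}(\mathfrak{G}_{m}/I(\Gamma)_{m})=\operatorname*{P}_{\Gamma}(m)-\deg\mathfrak{G}$ and $\dim_{k}k[\Gamma]_{m}=\operatorname*{P}_{\Gamma}(m)$ that it would require are themselves only guaranteed for $m\gg 0$, not from the lemma's numerical hypothesis. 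So your attempt correctly identifies the sensitive point, reduces it to a cohomological surjectivity, and leaves it open. Be aware, however, that the paper's own proof elides this step entirely, going directly from base-point-freeness of the complete system to $d(g)=\widetilde{d}(g)$; you have not produced an argument the authors omitted, but you have made visible a step that the printed proof treats as obvious.
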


\begin{proof}
By assumption and since $\operatorname*{P}\nolimits_{\Gamma}(m)=(\deg
\Gamma)\cdot m-p_{a}(\Gamma)+1$, we have%
\[
\deg(\Gamma)\cdot m\geq2p_{a}(\Gamma).
\]
By Corollary \ref{cor delta gorenstein adjoint ideal}, we obtain $\deg
\Delta(\mathfrak{G})\leq2\delta(\Gamma)$. Hence, it follows that%
\begin{align*}
\deg(\Gamma)\cdot m-\deg\Delta(\mathfrak{G}) &  \geq\deg(\Gamma)\cdot
m-2\delta(\Gamma)\\
&  =\deg(\Gamma)\cdot m-2p_{a}(\Gamma)+2p(\Gamma)\geq2p(\Gamma).
\end{align*}
This implies that $\left\vert mH-\Delta(\mathfrak{G})\right\vert $ is
base-point free (see \cite[Ch. IV, Cor. 3.2]{Hartshorne}), hence, since $g$ is
generic, we have $d(g)=\widetilde{d}(g)$. By reason of its degree, the linear
series is also nonspecial (see \cite[Ch. IV, Ex. 1.3.4]{Hartshorne}).
\end{proof}

\begin{remark}
For a plane curve $\Gamma$ of degree $n$ the condition $\operatorname*{P}%
\nolimits_{\Gamma}(m)-1\geq p_{a}(\Gamma)$ is equivalent to $n\cdot
m\geq(n-1)(n-2)$, which is satisfied for $m\geq n-2$.
\end{remark}

We summarize our approach in Algorithm \ref{algModNormal}.

\begin{algorithm}[H]
\caption{Modular adjoint ideal}
\label{algModNormal}
\begin{algorithmic}[1]
\REQUIRE A curve $\Gamma\subset\mathbb{P}^{r}$ satisfying the conditions of
Notation \ref{not: chapter-mod}.
\ENSURE The Gorenstein adjoint ideal $\mathfrak{G}(\Gamma)$.
\STATE choose an integer $t\geq 1$
\STATE $\mathcal{P}=\mathcal{GP}=\emptyset$
\LOOP
\STATE\label{state:step 4} choose a list $\mathcal{Q}$ of $t$ random primes not used so far
\FORALL {$p\in\mathcal{Q}$}
\IF {$\Gamma_{p}$ is irreducible, non-degenerate, \textbf{and} $\operatorname{LM}(I(\Gamma)) = \operatorname{LM}(I(\Gamma_{p}))$}
\STATE compute the reduced Gr\"{o}bner basis $G_p$ of $\mathfrak{G}(\Gamma_p)\subset R_p$
(via Alg. \ref{alg global from local})
\STATE $\mathcal{P}=\mathcal{P}\cup\{p\}$, $\mathcal{GP}=\mathcal{GP}\cup\{G_p\}$
\ENDIF
\ENDFOR
\STATE $(\mathcal{GP},\mathcal{P})=\textsc{deleteByMajorityVote}(\mathcal{GP}%
,\mathcal{P})$
\STATE  lift $(\mathcal{GP},\mathcal{P})$ to a set of polynomials $G\subset R$ via the Chinese
remainder theorem and Gaussian reduction
\IF {the lifting succeeds \textbf{and} \textsc{pTest}$(I(\Gamma),G,\mathcal{P})$}
\IF {$G$ is a Gr\"obner basis \textbf{and} $\left\langle G\right\rangle$ is saturated \textbf{and} $\left\langle G\right\rangle :M^{\infty}=\left\langle 1\right\rangle $}
\STATE choose $m$ such that $\operatorname*{P}\nolimits_{\Gamma}(m)-1\geq p_{a}(\Gamma)$
\STATE choose $g\in\left\langle G\right\rangle _{m}$ at random
\STATE choose a prime $p\in \mathcal{P}$
\STATE compute $\widetilde{d}(g_p)=\deg\left(  (I(\Gamma_p)+\left\langle g_p\right\rangle):M_{p}^{\infty}\right)$
\STATE compute $\delta(\Gamma)$ by applying Remark \ref{rem:comp-inv-plane-curves} 
\IF {$\widetilde{d}(g_{p})=\deg(\Gamma)\cdot m-\deg \left\langle
G(p)\right\rangle-\delta(\Gamma)$}
\RETURN $\left\langle G\right\rangle $
\ENDIF
\ENDIF
\ENDIF
\ENDLOOP
\end{algorithmic}
\end{algorithm}

\begin{remark}
In Algorithm \ref{algModNormal}, the different $G(p)$ can be computed in
parallel. The individual computations can be parallelized by partitioning
the singular loci.
\end{remark}

\begin{remark}
The most expensive step of the verification is the computation of
$\delta(\Gamma)$. If we skip the verification, the algorithm will become
probabilistic, that is, the output is the Gorenstein adjoint ideal only with
high probability.
This usually accelerates the algorithm considerably and gives us, in
particular, a fast probabilistic way to compute both the geometric genus
$p(\Gamma)$ and $\deg\Delta(\mathfrak{G})=\dim_{\mathbb{Q}}\left(
\overline{\mathbb{Q}[C]}/\mathcal{C}_{\mathbb{Q}[C]}\right)  $.
\end{remark}

\section{Timings\label{Sec examples and comparisons}}

The algorithms for adjoint ideals presented in this paper are implemented in
the \textsc{{Singular}} library {\texttt{adjointideal.lib}} (see
\cite{adjideal}). They make use of the normalization algorithm of Section
\ref{sec normalization} either in its local or local to global variant, as
appropriate. These variants, in turn, are part of the \textsc{{Singular}}
library {\texttt{locnormal.lib}} (see \cite{BDLS2}).

In this section, we compare the performance of the different
algorithms. Specifically, we consider%

\begin{tabular}
[c]{ll}%
\texttt{LA} & Mnuk's global linear algebra approach (Algorithm
\ref{alg conductor linear algebra}),\\
\texttt{IQ} & the global ideal quotient approach (Algorithm
\ref{Alg ideal quotient gorenstein adjoint ideal}),\\
\texttt{locIQ} & the local ideal quotient approach (Algorithm
\ref{alg global from local} using Algorithm \ref{alg local integral quotient}%
),\\
\texttt{\texttt{locIQP2}} & the local ideal quotient approach for plane curves
with the\\
& improvements of Section \ref{sec summary} concerning ordinary multiple\\
& points and singularities of type $ADE$, and\\
\texttt{modLocIQ} & the modular local ideal quotient strategy (Algorithm
\ref{algModNormal}).
\end{tabular}

For the modular approach, we do not make use of a local analysis of the
singular locus except for computing the invariants needed in the verification step.

To quantify the improvement in computation time obtained by omitting the
verification step in the modular approach, we give timings for the
\emph{resulting, now probabilistic, version of Algorithm \ref{algModNormal}
(denoted by} \texttt{modLocIQ'} \emph{in the tables)}. In all examples
computed so far, the result of the probabilistic algorithm is indeed correct.

To quantify the contributions of the different normalization algorithms and to
provide a lower bound for any adjoint ideal algorithm using them, we also
specify the following computation times: normalization in \textsc{{Singular}}
via the local to global approach outlined in Section \ref{sec normalization}
(denoted by \texttt{locNormal}); and finding an integral basis in
\textsc{{Maple}} via the algorithm of van Hoeij (denoted by \texttt{Maple-IB}%
). Once being fully implemented in \textsc{{Singular}}, we expect further
improvements of the performance by computing the local contribution or just an
integral basis of the local ring by the algorithm discussed in \cite{BDLS}.
Since this algorithm and van Hoeij's algorithm rely on Puiseux series, they
work in characteristic zero only.

All timings are in seconds on an AMD Opteron $6174$ machine with $48$ cores,
$2.2$GHz, and $128$GB of RAM running a Linux operating system. A dash
indicates that the computation did not finish within $10000$ seconds. The
\emph{timings for parallel computations are marked by the symbol * and the
maximum number of cores used in parallel is indicated in brackets}.

\begin{remark}
All examples are defined over the field of rationals. For {\texttt{locIQ}%
$^{\mathtt{\ast}}$}, the number of cores used corresponds to the number of
components of the decomposition of the singular locus over $\mathbb{Q}$. For
{\texttt{modLocIQ}$^{\mathtt{\ast}}$}, the number of cores used in a given
iteration of the algorithm is obtained by summing up the number of components
modulo $p$ over all primes $p\in\mathcal{Q}$ chosen in Step 4 of Algorithm 
\ref{algModNormal}.
\end{remark}

To show the power of the modular algorithm, we give simulated parallel timings
even if the number of processes exceeds the number of cores available on our
machine (which is a valid approach since the algorithm has basically zero communication
overhead). For the single-core timings of \texttt{modLocIQ}, we \emph{indicate
in square brackets the number of primes used by the algorithm}.

Now we turn to explicit examples. First we consider rational plane curves
defined by a random parametrization of degree $n$. These curves have
$\binom{n-1}{2}$ ordinary double points. Their defining equations $f_{1,n}$
were generated by the function \texttt{randomRatCurve} from the {\sc{Singular}}
library {\texttt{paraplanecurves.lib}} (see \cite{BDLS3}), using the random seed 
$1$ and a random parametrization with coefficients of bitlength $15$.%
\[%
\scalebox{1}{
\begin{tabular}
[c]{l|rc|rc|rc|}
& \text{$f_{1,5}$} &  & \text{$f_{1,6}$} &  & \text{$f_{1,7}$} & \\\hline
{\small {$\deg$}} & 5 &  & 6 &  & 7 & \\\hline
{\small {\texttt{locNormal}}} & 2.1 &  & 56 &  & - & \\
{\small {\texttt{Maple-IB}}} & 5.1 &  & 47 &  & 318 & \\\hline
{\small {\texttt{LA}}} & 98 &  & 4400 &  & - & \\
{\small {\texttt{IQ}}} & 2.1 &  & 56 &  & - & \\
{\small {\texttt{locIQ}}} & 1.3 &  & 54 &  & 3800 & \\
{\small {\texttt{locIQ}$^{\mathtt{\ast}}$}} & 1.3 & \hspace{-0.1in}(1) & 54 &
\hspace{-0.25in}(1) & 3800 & \hspace{-0.25in}(1)\\
{\small {\texttt{locIQP2}}} & .18 &  & 1.2 &  & 49 & \\
{\small {\texttt{locIQP2}$^{\mathtt{\ast}}$}} & .18 & \hspace{-0.1in}(1) &
1.2 & \hspace{-0.25in}(1) & 49 & \hspace{-0.25in}(1)\\
{\small {\texttt{modLocIQ}}} & 6.4 & \hspace{-0.1in}[33] & 19 & \hspace
{-0.25in}[53] & 150 & \hspace{-0.25in}[75]\\
{\small {\texttt{modLocIQ'}}} & 6.2 & \hspace{-0.1in}[33] & 18 &
\hspace{-0.25in}[53] & 104 & \hspace{-0.25in}[75]\\
{\small {\texttt{modLocIQ}$^{\mathtt{\ast}}$}} & .36 & \hspace{-0.1in}(74) &
1.6 & \hspace{-0.15in}(153) & 51 & \hspace{-0.25in}(230)\\
{\small {\texttt{modLocIQ'}$^{\mathtt{\ast}}$}} & .21 & \hspace{-0.1in}(74) &
0.48 & \hspace{-0.15in}(153) & 5.2 & \hspace{-0.25in}(230)
\end{tabular}
}
\]
We observe that the detection of special types of singularities is fast and
yields the best performance among the non-probabilistic algorithms.

To compare the algorithms at a single singularity, we consider plane curves
with exactly one $A_{n}$ respectively $D_{n}$ singularity at the origin of the
affine chart $\{Z\neq0\}$ (ignoring singularities at infinity). For the
modular approach, we omit verification since this step relies on global
properties of the curve.

The curves with affine equation $f_{2,n,d}=Y^{2}+X^{n+1}+Y^{d}$, $n\geq1$,
$d\geq3$, have precisely one singularity of type $A_{n}$ at the origin:

{\footnotesize
\[%
\begin{tabular}
[c]{l|cc|cc|cc|cc|cc|cc|}
& \text{$f_{2,5,10}$} &  & \multicolumn{1}{c}{\text{$f_{2,5,100}$}} &  &
\text{$f_{2,5,500}$} &  & \text{$f_{2,50,100}$} &  & \text{$f_{2,50,500}$} &
& \text{$f_{2,400,500}$} & \\\hline
$\deg$ & \multicolumn{1}{r}{10} &  & \multicolumn{1}{r}{100} &  &
\multicolumn{1}{r}{\thinspace500} &  & \multicolumn{1}{r}{100} &  &
\multicolumn{1}{r}{500} &  & \multicolumn{1}{r}{500} & \\\hline
\texttt{locNormal} & \multicolumn{1}{r}{.12} &  & \multicolumn{1}{r}{.12} &  &
\multicolumn{1}{r}{.12} &  & \multicolumn{1}{r}{.51} &  &
\multicolumn{1}{r}{.51} &  & \multicolumn{1}{r}{3.6} & \\
\texttt{Maple-IB} & \multicolumn{1}{r}{.08} &  & \multicolumn{1}{r}{1.5} &  &
\multicolumn{1}{r}{96} &  & \multicolumn{1}{r}{4.7} &  &
\multicolumn{1}{r}{150} &  & \multicolumn{1}{r}{630} & \\\hline
\texttt{LA} & \multicolumn{1}{r}{.18} &  & \multicolumn{1}{r}{140} &  &
\multicolumn{1}{r}{-} &  & \multicolumn{1}{r}{150} &  & \multicolumn{1}{r}{-}
&  & \multicolumn{1}{r}{-} & \\
\texttt{IQ} & \multicolumn{1}{r}{.12} &  & \multicolumn{1}{r}{.12} &  &
\multicolumn{1}{r}{.12} &  & \multicolumn{1}{r}{.51} &  &
\multicolumn{1}{r}{.51} &  & \multicolumn{1}{r}{3.6} & \\
\texttt{modLocIQ'} & \multicolumn{1}{r}{.20} & \hspace{-0.1in}[2] &
\multicolumn{1}{r}{.22} & \hspace{-0.1in}[2] & \multicolumn{1}{r}{.96} &
\hspace{-0.15in}[2] & \multicolumn{1}{r}{1.1} & \hspace{-0.15in}[2] &
\multicolumn{1}{r}{2.0} & \hspace{-0.2in}[2] & \multicolumn{1}{r}{11} &
\hspace{-0.3in}[2]\\
\texttt{modLocIQ'}$^{\mathtt{\ast}}$ & \multicolumn{1}{r}{.10} &
\hspace{-0.1in}(2) & \multicolumn{1}{r}{.13} & \hspace{-0.1in}(2) &
\multicolumn{1}{r}{.48} & \hspace{-0.15in}(2) & \multicolumn{1}{r}{.54} &
\hspace{-0.15in}(2) & \multicolumn{1}{r}{1.2} & \hspace{-0.2in}(2) &
\multicolumn{1}{r}{5.8} & \hspace{-0.3in}(2)
\end{tabular}
\]
}

The curves with affine equation $f_{3,n,d}=X(X^{n-1}+Y^{2})+Y^{d}$, $n\geq2$,
$d\geq3$, have exactly one singularity of type $D_{n}$ at the
origin:{\footnotesize
\[%
\begin{tabular}
[c]{l|cc|cc|cc|cc|cc|cc|}
& \text{$f_{3,5,10}$} &  & \text{$f_{3,5,100}$} &  & \text{$f_{3,5,500}$} &  &
\text{$f_{3,50,100}$} &  & \text{$f_{3,50,500}$} &  & \text{$f_{3,400,500}$} &
\\\hline
$\deg$ & \multicolumn{1}{r}{10} &  & \multicolumn{1}{r}{100} &  &
\multicolumn{1}{r}{500} &  & \multicolumn{1}{r}{100} &  &
\multicolumn{1}{r}{50} &  & \multicolumn{1}{r}{500} & \\\hline
\texttt{locNormal} & \multicolumn{1}{r}{.15} &  & \multicolumn{1}{r}{.15} &  &
\multicolumn{1}{r}{.15} &  & \multicolumn{1}{r}{.67} &  &
\multicolumn{1}{r}{.67} &  & \multicolumn{1}{r}{4.9} & \\
\texttt{Maple-IB} & \multicolumn{1}{r}{.05} &  & \multicolumn{1}{r}{1.7} &  &
\multicolumn{1}{r}{100} &  & \multicolumn{1}{r}{34} &  &
\multicolumn{1}{r}{1830} &  & \multicolumn{1}{r}{-} & \\\hline
\texttt{LA} & \multicolumn{1}{r}{.20} &  & \multicolumn{1}{r}{140} &  &
\multicolumn{1}{r}{-} &  & \multicolumn{1}{r}{140} &  & \multicolumn{1}{r}{-}
&  & \multicolumn{1}{r}{-} & \\
\texttt{IQ} & \multicolumn{1}{r}{.15} &  & \multicolumn{1}{r}{.15} &  &
\multicolumn{1}{r}{.15} &  & \multicolumn{1}{r}{.67} &  &
\multicolumn{1}{r}{.67} &  & \multicolumn{1}{r}{5.0} & \\
\texttt{modLocIQ'} & \multicolumn{1}{r}{.22} & \hspace{-0.1in}[2] &
\multicolumn{1}{r}{.23} & \hspace{-0.1in}[2] & \multicolumn{1}{r}{.23} &
\hspace{-0.1in}[2] & \multicolumn{1}{r}{1.5} & \hspace{-0.15in}[2] &
\multicolumn{1}{r}{1.5} & \hspace{-0.2in}[2] & \multicolumn{1}{r}{24} &
\hspace{-0.35in}[2]\\
\texttt{modLocIQ'}$^{\mathtt{\ast}}$ & \multicolumn{1}{r}{.09} &
\hspace{-0.1in}(2) & \multicolumn{1}{r}{.10} & \hspace{-0.1in}(2) &
\multicolumn{1}{r}{.10} & \hspace{-0.1in}(2) & \multicolumn{1}{r}{.74} &
\hspace{-0.15in}(2) & \multicolumn{1}{r}{.77} & \hspace{-0.2in}(2) &
\multicolumn{1}{r}{17} & \hspace{-0.35in}(2)
\end{tabular}
\
\]
}

In both examples, the best strategy is \texttt{IQ} since we consider only one
singularity and since no coefficients of large bitlength occur.

The plane curves with defining equations%
\[
f_{4,n}=\left(  X^{n+1}+Y^{n+1}+Z^{n+1}\right)  ^{2}-4\left(  X^{n+1}%
Y^{n+1}+Y^{n+1}Z^{n+1}+Z^{n+1}X^{n+1}\right)
\]
were given in \cite{H} and have $3\left(  n+1\right)  $ singularities of type
$A_{n}$ if $n$ is even. To ensure that all singularities of the curves are in
the affine chart $\{Z\neq0\}$, we substitute $Z=2X-3Y+1$.
\[%
\begin{tabular}
[c]{l|cc|cc|cc|}
& \text{$f_{4,4}$} &  & \text{$f_{4,6}$} &  & \text{$f_{4,8}$} & \\\hline
{\small {$\deg$}} & \multicolumn{1}{r}{10} &  & \multicolumn{1}{r}{14} &  &
\multicolumn{1}{r}{18} & \\\hline
{\small {\texttt{locNormal}}} & \multicolumn{1}{r}{1.6} &  &
\multicolumn{1}{r}{-} &  & \multicolumn{1}{r}{-} & \\
{\small {\texttt{Maple-IB}}} & \multicolumn{1}{r}{2.2} &  &
\multicolumn{1}{r}{14} &  & \multicolumn{1}{r}{70} & \\\hline
{\small {\texttt{LA}}} & \multicolumn{1}{r}{89} &  & \multicolumn{1}{r}{-} &
& \multicolumn{1}{r}{-} & \\
{\small {\texttt{IQ}}} & \multicolumn{1}{r}{2.5} &  & \multicolumn{1}{r}{-} &
& \multicolumn{1}{r}{-} & \\
{\small {\texttt{locIQ}}} & \multicolumn{1}{r}{.96} &  & \multicolumn{1}{r}{-}
&  & \multicolumn{1}{r}{-} & \\
{\small {\texttt{locIQ}$^{\mathtt{\ast}}$}} & \multicolumn{1}{r}{.36} &
\hspace{-0.08in}(6) & \multicolumn{1}{r}{-} &  & \multicolumn{1}{r}{-} & \\
{\small {\texttt{locIQP2}}} & \multicolumn{1}{r}{1.0} &  &
\multicolumn{1}{r}{-} &  & \multicolumn{1}{r}{-} & \\
{\small {\texttt{locIQP2}$^{\mathtt{\ast}}$}} & \multicolumn{1}{r}{.38} &
\hspace{-0.08in}(6) & \multicolumn{1}{r}{-} &  & \multicolumn{1}{r}{-} & \\
{\small {\texttt{modLocIQ}}} & \multicolumn{1}{r}{3.7} & \hspace{-0.08in}[3] &
\multicolumn{1}{r}{23} & \hspace{-0.1in}[4] & \multicolumn{1}{r}{190} &
\hspace{-0.3in}[4]\\
{\small {\texttt{modLocIQ'}}} & \multicolumn{1}{r}{3.3} & \hspace
{-0.08in}[3] & \multicolumn{1}{r}{20} & \hspace{-0.1in}[4] &
\multicolumn{1}{r}{170} & \hspace{-0.3in}[4]\\
{\small {\texttt{modLocIQ}$^{\mathtt{\ast}}$}} & \multicolumn{1}{r}{.63} &
\hspace{-0.08in}(27) & \multicolumn{1}{r}{4.4} & \hspace{-0.1in}(48) &
\multicolumn{1}{r}{50} & \hspace{-0.3in}(48)\\
{\small {\texttt{modLocIQ'}$^{\mathtt{\ast}}$}} & \multicolumn{1}{r}{.38} &
\hspace{-0.08in}(27) & \multicolumn{1}{r}{2.2} & \hspace{-0.1in}(48) &
\multicolumn{1}{r}{30} & \hspace{-0.3in}(48)
\end{tabular}
\]

To conclude this section, we present examples of curves in higher-dimensional
projective space. As above, we first consider curves with only one singularity
in a given affine chart: let $L_{n}$ be the ideal of the image of%
\[
\mathbb{A}^{1}\longrightarrow\mathbb{A}^{3}\text{, }t\mapsto(t^{n-2},\text{
}t^{n-1},\text{ }t^{n})\text{.}%
\]
Second, denote by $I_{n}$ the ideal of the image in $\mathbb{P}^{5}$ under the
degree-$2$ Veronese embedding of the curve $\{f_{4,n}=0\}$. The resulting
timings are:%
\[%
\begin{tabular}
[c]{l|cc|cc|cc|cc|}
& \text{$L_{25}$} &  & \text{$L_{50}$} &  & \text{$I_{4}$} &  & \text{$I_{6}$}
& \\\hline
{\small {$\deg$}} & \multicolumn{1}{r}{25} &  & \multicolumn{1}{r}{50} &  &
\multicolumn{1}{r}{20} &  & \multicolumn{1}{r}{28} & \\\hline
{\small {\texttt{locNormal}}} & \multicolumn{1}{r}{3.9} &  &
\multicolumn{1}{r}{84} &  & \multicolumn{1}{r}{21} &  & \multicolumn{1}{r}{-}
& \\\hline
{\small {\texttt{IQ}}} & \multicolumn{1}{r}{3.9} &  & \multicolumn{1}{r}{84} &
& \multicolumn{1}{r}{30} &  & \multicolumn{1}{r}{-} & \\
{\small {\texttt{locIQ}}} & \multicolumn{1}{r}{3.9} &  &
\multicolumn{1}{r}{84} &  & \multicolumn{1}{r}{18} &  & \multicolumn{1}{r}{-}
& \\
{\small {\texttt{locIQ}$^{\mathtt{\ast}}$}} & \multicolumn{1}{r}{3.9} &
\hspace{-0.1in}(1) & \multicolumn{1}{r}{84} & \hspace{-0.2in}(1) &
\multicolumn{1}{r}{7.5} & \hspace{-0.1in}(6) & \multicolumn{1}{r}{-} & \\
{\small {\texttt{modLocIQ'}}} & \multicolumn{1}{r}{6.5} & \hspace{-0.1in}[2] &
\multicolumn{1}{r}{220} & \hspace{-0.2in}[2] & \multicolumn{1}{r}{74} &
\hspace{-0.1in}[5] & \multicolumn{1}{r}{2600} & \hspace{-0.4in}[5]\\
{\small {\texttt{modLocIQ'}$^{\mathtt{\ast}}$}} & \multicolumn{1}{r}{3.3} &
\hspace{-0.1in}(2) & \multicolumn{1}{r}{140} & \hspace{-0.2in}(2) &
\multicolumn{1}{r}{4.0} & \hspace{-0.1in}(45) & \multicolumn{1}{r}{59} &
\hspace{-0.4in}(69)
\end{tabular}
\]

To summarize, we observe that the ideal quotient approach is faster than the
linear algebra one. To some extent, this is due to the lack of efficiency of
the rational function arithmetic in \textsc{{Singular}}. The local strategy is
faster than the global one if there is more than one component in the
decomposition of the singular locus over $\mathbb{Q}$. In addition, the local
algorithm can be run in parallel and is, then, even faster. In most examples,
especially when the coefficients have large bitlength, the fastest approach is
the modular local strategy, which parallelizes in a two-fold way, by
localization and modularization. In contrast to other modular algorithms (such
as modular normalization), the verification step is usually very fast.

\vspace{0.1in} \noindent\emph{Acknowledgements}. We would like to thank
Christoph Lossen, Thomas Markwig, Mathias Schulze, and Frank Seelisch for
helpful discussions.


\end{document}